\documentclass[reqno,10pt]{amsart}

\title{Hessian of Hausdorff dimension on purely imaginary directions}

\author[]{Martin Bridgeman}
\author[]{Beatrice Pozzetti}

\author[]{Andr\'es Sambarino}

\author[]{Anna Wienhard}
\thanks{A. S. was partially financed by ANR DynGeo ANR-16-CE40-0025. B. P. and  A. W. acknowledge funding by the Deutsche Forschungsgemeinschaft within the Priority Program SPP 2026 ``Geometry at Infinity''. A. W. acknowledges funding  by the European Research Council under ERC-Consolidator grant 614733, and by the Klaus-Tschira-Foundation. M. B. acknowledges funding  by NSF Grants DMS-2005498.}
\date{}

\subjclass[]{}


\usepackage[colorlinks = true,backref = page,hyperindex,breaklinks]{hyperref} 
\usepackage{ stmaryrd }
\usepackage{amssymb}
\usepackage{mathtools}      
\usepackage{mathabx}        
\usepackage[bb = fourier,cal = euler,scr = rsfs]{mathalfa}	
\usepackage{enumitem}       
\usepackage[english]{babel}
\usepackage{tikz}			
\usepackage{tikz-cd}		
\usepackage[font = small]{caption}	
\usepackage{nicefrac}
\usepackage{comment}

\usetikzlibrary{calc}


\renewcommand*{\backref}[1]{}
\renewcommand*{\backrefalt}[4]{\quad \tiny
  \ifcase #1 (\textbf{NOT CITED.})%
  \or    (Cited on page~#2.)%
  \else   (Cited on pages~#2.)%
  \fi}

\makeatletter

\def\MRbibitem{\@ifnextchar[\my@lbibitem\my@bibitem}

\def\mybiblabel#1#2{\@biblabel{{\hyperref{http://www.ams.org/mathscinet-getitem?mr=#1}{}{}{#2}}}}

\def\myhyperanchor#1{\Hy@raisedlink{\hyper@anchorstart{cite.#1}\hyper@anchorend}}

\def\my@lbibitem[#1]#2#3#4\par{%
  \item[\mybiblabel{#2}{#1}\myhyperanchor{#3}\hfill]#4%
  \@ifundefined{ifbackrefparscan}{}{\BR@backref{#3}}%
  \if@filesw{\let\protect\noexpand\immediate
    \write\@auxout{\string\bibcite{#3}{#1}}}\fi\ignorespaces%
}

\def\my@bibitem#1#2#3\par{%
  \refstepcounter\@listctr
  \item[\mybiblabel{#1}{\the\value\@listctr}\myhyperanchor{#2}\hfill]#3%
  \@ifundefined{ifbackrefparscan}{}{\BR@backref{#2}}%
  \if@filesw\immediate\write\@auxout
    {\string\bibcite{#2}{\the\value\@listctr}}\fi\ignorespaces%
}

\makeatother


\newcommand{\xqedhere}[2]{%
  \rlap{\hbox to#1{\hfil\llap{\ensuremath{#2}}}}}



\newcommand{\R}{\mathbb{R}} 
\newcommand{\C}{\mathbb{C}}
\newcommand{\N}{\mathbb{N}}
\renewcommand{\P}{\mathbb{P}}

\newcommand{\K}{\mathbb K}
\newcommand{\HH}{\mathbb H}


\newcommand{\eps}{\varepsilon}
\newcommand{\G}{\sf{\Gamma}}    
\newcommand{\Gr}{\cal G}    

\newcommand{\g}{\gamma}

\newcommand{\bord}{\partial}

\newcommand{\UG}{\sf{U}\G}

\newcommand{\wk}{\check}

\newcommand{\sroot}{{\sf{a}}}
\newcommand{\II}{\mathbf{I}}
\newcommand{\JJ}{\mathbf{J}}
\newcommand{\PP}{\mathbf{P}}

\renewcommand{\sf}[1]{{\mathsf{#1}}}
\renewcommand{\rm}{\mathbf}
\newcommand{\cal}{\mathcal}
\renewcommand{\frak}{\mathfrak}

\newcommand{\Weyl}W

\renewcommand{\sl}{\frak{sl}}
\newcommand{\su}{\frak{su}}
\newcommand{\so}{\frak{so}}
\newcommand{\s}{\frak s}

\newcommand{\Bending}{\ker(d_\rho\conj+\id)}

\DeclareMathOperator{\holder}{Hol}
\newcommand{\Anosov}{\frak X}

\newcommand{\QF}{\mathcal{QF}}

\DeclareMathOperator{\conj}{\tau}

\DeclareMathOperator{\class}{C}

\DeclareMathOperator{\PSL}{{\mathsf{PSL}}}

\DeclareMathOperator{\SO}{{\mathsf{SO}}}
\DeclareMathOperator{\PGL}{{\mathsf{PGL}}}
\DeclareMathOperator{\PO}{{\mathsf{PO}}}
\DeclareMathOperator{\PSO}{{\mathsf{PSO}}}
\DeclareMathOperator{\PSU}{{\mathsf{PSU}}}

\DeclareMathOperator{\id}{id}
\DeclareMathOperator{\Isom}{Isom}

\DeclareMathOperator{\Hff}{{Hf{}f}}
\DeclareMathOperator{\Hom}{{Hom}}

\DeclareMathOperator{\Hess}{Hess}

\DeclareMathOperator{\hitchin}{\mathscr{H}}



\newcommand{\Wedge}{\mathsf{\Lambda}}  



\DeclareMathOperator{\Var}{Var}

\DeclareMathOperator{\PSp}{{\mathsf{PSp}}}
\DeclareMathOperator{\PU}{{\mathsf{PU}}}



\newcommand{\bsm}{\left(\begin{smallmatrix}}
\newcommand{\esm}{\end{smallmatrix}\right)}
\newcommand{\bpm}{\begin{pmatrix}}
\newcommand{\epm}{\end{pmatrix}}

\renewcommand{\epsilon}{\varepsilon}



\setcounter{tocdepth}{1}    
\setcounter{secnumdepth}{3}
\hypersetup{bookmarksdepth  =  3} 

\setlist[enumerate,1]{label = {\upshape(\roman*)},ref = \roman*}
\setlist[enumerate,2]{label = {\upshape(\alph*)},ref = \alph*}


\newtheorem{thmA}{Theorem}

\newtheorem{corA}{Corollary}

\newtheorem*{thm*}{Theorem}
\newtheorem*{cor*}{Corollary}
\newtheorem*{prop*}{Proposition}

\newtheorem{thm}{Theorem}[section]

\newtheorem{cor}[thm]{Corollary}
\newtheorem{lemma}[thm]{Lemma}
\newtheorem{prop}[thm]{Proposition}

\theoremstyle{definition}

\newtheorem*{defi*}{Definition}
\newtheorem{defi}[thm]{Definition}

\theoremstyle{remark}
\newtheorem{obs}[thm]{Remark}

\newcommand{\nocontentsline}[3]{}
\newcommand{\tocless}[2]{\bgroup\let\addcontentsline=\nocontentsline#1{#2}\egroup}


\begin{document}
\begin{abstract}
We extend classical results of Bridgeman-Taylor and McMullen on the Hessian of the Hausdorff dimension on quasi-Fuchsian space to the class of $(1,1,2)$-hyperconvex representations, a class introduced in \cite{PSW1} which includes small complex deformations of Hitchin representations and of $\Theta$-positive representations. We also prove that the Hessian of the Hausdorff dimension of the limit set at the inclusion $\G\to\PO(n,1)\to\PU(n,1)$ is positive definite when $\G$ is co-compact in $\PO(n,1)$ (unless $n=2$ and the deformation is tangent to $\frak X\big(\G, \PO(2,1)\big)$).
\end{abstract}
\maketitle

\tableofcontents

\section{Introduction}

One of the most interesting and well studied metrics on the Teichm\"uller space, the parameter space of hyperbolic structures on a closed surface $S$ of genus $g\geq 2$, is the Weil-Petersson metric, a non-complete Riemannian metric. A celebrated result by B.-Taylor \cite{WP-QF} and McMullen  \cite{mcMWP} gives a geometric interpretation of this metric in terms of dynamical invariants of quasi-Fuchsian representations. 

Recall that the holonomy representation realizes the Teichm\"uller space $\cal T(S)$ as a connected component of the character variety 
$$\frak X\big(\pi_1S,\PSL_2(\R)\big):=\Hom\big(\pi_1S,\PSL_2(\R)\big)/\!/\PSL_2(\R),$$
which, in turn, sits as a totally real submanifold of the complex character variety $\frak X\big(\pi_1S,\PSL_2(\C)\big)$, endowed with the complex structure $J$ induced by the complex structure of the Lie group $\PSL_2(\C)$. A neighborhood of $\cal T(S)$ in the complex character variety is given by quasi-Fuchsian space $\QF(S)$, the set of conjugacy classes of representations 
$\rho:\pi_1S\to\PSL_2(\C)=\Isom_0(\HH^3)$ preserving a convex subset of $\HH^3$ on which they act cocompactly. Any such $\rho$ is thus a quasi-isometric embedding and admits an injective equivariant boundary map $\xi_\rho:\bord\pi_1S\to\C\P^1$ whose image is a Jordan curve.
Given $\rho\in\QF(S)$, we denote by $\Hff(\rho)$ the Hausdorff dimension of this Jordan curve. It is bounded below by $1$ and Bowen showed that $\Hff(\rho)$ equals $1$ precisely when $\rho$ belongs to the Teichm\"uller space \cite{bowen-quasicircles}. The result of  B.-Taylor and McMullen realizes the Weil-Petersson metric by looking at the infinitesimal change of the Hausdorff dimension in purely imaginary directions at a representation $\rho\in \cal T(S) \subset \QF(S)$.
\begin{thm}[B.-Taylor \cite{WP-QF} -McMullen \cite{mcMWP}]\label{t.bt}
For each $\rho\in\cal T(S)$ and every differentiable curve $(\rho_t)_{t\in(-\eps,\eps)}\subset\cal T(S)$ with $\rho_0=\rho$ it holds $$\Hess\Hff(J\dot\rho)=\|\dot\rho\|_{WP}.$$
\end{thm}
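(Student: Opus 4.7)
The plan is to use Bowen's dynamical characterization of $\Hff$ and perform implicit differentiation of the resulting pressure equation, following the approach of \cite{mcMWP}. For any convex cocompact $\rho\in\QF(S)$ one has
\[P\bigl(-\Hff(\rho)\cdot\ell_\rho\bigr)=0,\]
where $P$ is the topological pressure of the geodesic flow on $\UG$ with $\G\subset\PSL_2(\R)$ the Fuchsian model, and $\ell_\rho:\UG\to\R$ is a positive Hölder potential whose integrals over closed orbits give the real parts of the complex translation lengths of $\rho(\g)$. For a smooth family $(\sigma_t)\subset\QF(S)$ based at $\sigma_0=\rho\in\cal T(S)$, the function $s(t):=\Hff(\sigma_t)$ is implicitly defined by $P(-s(t)\ell_{\sigma_t})=0$ with $s(0)=1$, and the Hessian of $\Hff$ at $\rho$ is read off from $\ddot s(0)$ for curves with prescribed initial velocity.

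The key analytic input is the Cauchy-Riemann relation stemming from holomorphicity of the complex translation length on $\frak X(\pi_1S,\PSL_2(\C))$: at a Fuchsian $\rho$, for $\dot\sigma_0=J\dot\rho$ purely imaginary one obtains
\[\frac{d}{dt}\Big|_{t=0}\ell_{\sigma_t}=0\quad\text{and}\quad\frac{d^2}{dt^2}\Big|_{t=0}\ell_{\sigma_t}=-\frac{d^2}{ds^2}\Big|_{s=0}\ell_{\rho_s},\]
where $(\rho_s)\subset\cal T(S)$ is the real curve through $\rho$ with tangent $\dot\rho$. Combined with the standard first- and second-variation formulas for pressure at the equilibrium state $m$ (the Bowen-Margulis-Sullivan/Liouville measure of $\rho$), implicit differentiation of $P(-s(t)\ell_{\sigma_t})=0$ yields $\dot s(0)=0$ and
\[\Hess\Hff(J\dot\rho)=\frac{1}{\int\ell_0\,dm}\int\frac{d^2}{ds^2}\Big|_{s=0}\ell_{\rho_s}\,dm.\]
Since $\Hff\equiv 1$ on $\cal T(S)$ (Bowen), the same implicit differentiation applied to the real curve $(\rho_s)$ produces the cohomological identity $\int\ddot\ell\,dm=\Var_m(\dot\ell)$, where $\dot\ell,\ddot\ell$ denote the first two variations of $\ell_{\rho_s}$ at $s=0$ and $\Var_m$ is the dynamical (Livsic-cohomological) variance under $m$. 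Substituting gives $\Hess\Hff(J\dot\rho)=\Var_m(\dot\ell)/\int\ell_0\,dm$.

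The remaining and deepest step, which I expect to be the main obstacle, is the identification of this \emph{pressure form} on $T_\rho\cal T(S)$ with the Weil-Petersson norm. Following McMullen, one realizes $\dot\rho\in H^1(\pi_1S,\sl_2(\R))$ by a harmonic Beltrami differential $\mu$ on $S=\G\/\HH^2$, expresses $\dot\ell(\g)$ as a geodesic integral of an explicit Hölder function on $\UG$ built from $\mu$, and evaluates its $L^2$-variance against the Liouville measure via a harmonic-analytic computation tied to the $\PSL_2(\R)$-decomposition of $L^2(\UG)$ and the vanishing of first-order length variation cocycles modulo coboundaries. Up to the universal normalization $\int\ell_0\,dm=2\pi|\chi(S)|$, the dynamical variance coincides with the Petersson integral of $|\mu|^2$ over $S$, yielding $\|\dot\rho\|_{WP}$ and concluding the proof.
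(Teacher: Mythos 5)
Your outline is essentially the McMullen route, and it is the same underlying mechanism this paper uses to prove its generalization (Theorem \ref{tA}): Bowen's pressure equation $P(-\Hff(\rho)\ell_\rho)=0$, the first/second variation formulas of Proposition \ref{pderiv}, and the fact that real translation length is the real part of the holomorphic complex length, hence pluriharmonic and even under the conjugation involution $\tau$. Your implicit-differentiation computation is correct: along an imaginary curve all $\dot\ell_\g$ vanish, so Liv\v sic forces $\dot\ell$ cohomologous to zero, the variance term drops out, and $\Hess\Hff(J\dot\rho)=\big(\int\ddot\ell^{\mathrm{re}}\,dm\big)/\int\ell_0\,dm=\Var(\dot\ell,m)/\int\ell_0\,dm$, where the last equality comes from differentiating $P(-\ell_{\rho_s})=0$ along the real curve (using $\Hff\equiv1$ on $\cal T(S)$). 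The paper packages the same computation through the renormalized intersection $\JJ$ and pluriharmonicity of $\II^\varphi_\rho$ (Proposition \ref{pluriharmonic}, Lemma \ref{degenerate}, Theorem \ref{HessJ}) rather than through the pressure equation directly, but the two bookkeepings are equivalent; what your version buys is transparency of where each cancellation comes from, what the paper's version buys is that it generalizes verbatim to $\II^{\sroot_1}$ and $\II^{\omega_1}$ in higher rank.

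The genuine gap is the last step. Everything up to that point only establishes $\Hess\Hff(J\dot\rho)=\PP(\dot\rho)$, the pressure form; the identification $\PP(\dot\rho)=\|\dot\rho\|_{WP}$ (up to the universal normalization you mention) is a separate theorem of McMullen, relying on Wolpert's expansion of the second variation of geodesic length and a nontrivial computation relating the dynamical variance of $\dot\ell$ against the Liouville measure to the Petersson pairing of the harmonic Beltrami differential representing $\dot\rho$. Your final paragraph gestures at this but does not supply the computation, and it cannot be recovered from thermodynamic formalism alone: the pressure form is defined for any Anosov reparametrization, whereas the WP identity is special to $\PSL_2$. Note that the present paper deliberately sidesteps this difficulty in its generalization by leaving the pressure form $\PP^{\sroot}$ on the right-hand side of Theorem \ref{tA} and Corollary \ref{cA}; if you intend to prove the statement as written, with $\|\dot\rho\|_{WP}$, you must either import McMullen's identification as a black box or reproduce Bridgeman--Taylor's direct second-variation computation of $\Hff$ on quasi-Fuchsian space.
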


In recent years, convex-cocompactness has been generalized from rank 1 to real-algebraic semisimple Lie groups\footnote{(of non-compact type)} $\sf G$ of arbitrary rank, via the concept of \emph{Ano\-sov representations} $\rho:\G\to\sf G_\K,$ where, for $\K=\R$ or $\C,$ $\sf G_\K$ denotes the group of the $\K$-points of $\sf G.$ Specifying a set $\Theta$ of simple roots, let $\sf G_\K/\sf P_\Theta$ be the space of parabolic subgroups of type $\Theta.$ Then $\Theta$-Anosov representations are characterized by admitting a continuous, equivariant, transverse boundary map $\xi^\Theta_\rho:\bord\G\to \sf G_\K/\sf P_\Theta$ with good dynamical properties \cite{Labourie-Anosov, GW-Domains, KLP-Morse, KLP1, BPS, KP}. 
They form open subsets $$\frak X_{\Theta}(\G,\sf G_\K)=\big\{\rho\in\frak X(\G,\sf G_\K):\rho\textrm{ is $\Theta$-Anosov}\big\}$$ of the character variety.

For each $\sroot\in\Theta$ B.-Canary-Labourie-S. \cite{pressure} constructed, using the thermodynamic formalism, an analogue of the Weil-Petersson metric on $\frak X_{\Theta}(\G,\sf G_\K)$, the {\em spectral radius pressure form}  $\PP^{\omega_\sroot}$, where $\omega_\sroot$ is the fundamental weight associated to $\sroot.$ We will recall this construction on Section \ref{b2}.

Probably the best studied space of Anosov representations is the $\PSL_d(\R)$-Hitchin component. Hitchin introduced a special connected component $$\hitchin(S,\sf G_\R)\subset\frak X(\pi_1S,\sf G_\R),$$ when $\sf G_\R$ is moreover center-free and simple split, which in the case of $\PSL_d(\R)$ can be described as the connected component $\hitchin_d(S)\subset\frak X\big(\pi_1S,\PSL_d(\R)\big)$ containing a \emph{Fuchsian representation}, i.e. the composition of the holonomy of a hyperbolic structure with the irreducible representation $\pi_1S\to\PSL_2(\R)\to \PSL_d(\R).$
On the $\PSL_d(\R)$-Hitchin component B.-Canary-Labourie-S. \cite{LiouvillePressure} defined a different pressure form, denoted by $\PP^{\sroot_1},$ to which we will refer here as the \emph{spectral gap pressure form}. They  prove that $\PP^{\sroot_1}$ is non-degenerate on $\hitchin_d(S)$ and extends the Weil-Petersson inner product on Teichm\"uller space, embedded into $\hitchin_d(S)$ as the space of Fuchsian representations.

A corollary of the main result of the paper is a generalization of Theorem \ref{t.bt}. To state the result, we denote by $\Pi$ the set of simple (restricted) roots of $\sf G_\R$ and consider the Hitchin component $\hitchin(S,\sf G_\R)$ as a subset of $\frak X_{\Pi}\big(\pi_1S,\sf G_\C\big),$ the latter equipped the complex structure $J$ induced by the complex structure of $\sf G_\C.$ For $\sroot\in\Pi$ denote by $$\Hff_\sroot(\rho)=\dim_{\Hff}\big(\xi^{\sroot}_\rho(\bord\pi_1S)\big)$$ the Hausdorff dimension of the (image of the) limit curve $\xi^{\sroot}_{\rho}:\bord\G\to\cal F_{\sroot}(\sf G_\C)$ for a(ny) Riemannian metric on $\cal F_{\sroot}(\sf G_\C).$ It follows from Theorem \ref{h->h} that $\Hff_\sroot$ is critical at $\hitchin(S,\sf G_\R)$ and thus its Hessian is well defined.

\begin{corA}\label{cA}\label{c.Hitchin}For every $v\in\sf T\hitchin(S,\sf G_\R)$ and every $\sroot\in\Pi$ one has $$\Hess\Hff_\sroot(Jv)=\PP^{\sroot}(v).$$ Moreover, when $\sf G_\R=\PSL_d(\R)$ the Hessian of $\Hff_{\sroot_1}:\frak X_\Pi\big(\pi_1S,\PSL_d(\C)\big)\to\R,$ at a Hitchin point $\rho,$ is strictly positive on every direction except $\sf T_\rho\hitchin_d(S),$ where it is degenerate. In particular the Hitchin locus is an isolated minimum for $\Hff_{\sroot_1}.$
\end{corA}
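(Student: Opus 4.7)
The plan is to deduce Corollary~\ref{cA} from the main Hessian theorem of the paper combined with known facts about Hitchin representations. Every Hitchin representation $\rho\in\hitchin(S,\sf G_\R)$ is $(1,1,2)$-hyperconvex with respect to every simple root $\sroot\in\Pi$, a fact going back to Labourie and recorded in \cite{PSW1}. Viewed as a point of the complex character variety $\frak X_\Pi\big(\pi_1 S,\sf G_\C\big)$, it therefore lies in the open locus of $(1,1,2)$-hyperconvex representations to which the main Hessian formula of the paper applies. Together with the criticality statement of Theorem~\ref{h->h}, that formula directly yields the identity $\Hess\Hff_\sroot(Jv)=\PP^\sroot(v)$ for every $v\in \sf T_\rho\hitchin(S,\sf G_\R)$, proving the first assertion.

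For the second assertion I work at a Hitchin point $\rho\in\hitchin_d(S)\subset \frak X_\Pi\big(\pi_1 S,\PSL_d(\C)\big)$. The totally real embedding gives the decomposition $\sf T_\rho\frak X_\Pi=\sf T_\rho\hitchin_d(S)\oplus J\,\sf T_\rho\hitchin_d(S)$, so any tangent vector has a unique expression $w=v_1+Jv_2$ with $v_1,v_2\in\sf T_\rho\hitchin_d(S)$. I expand the Hessian bilinearly as
\[
\Hess\Hff_{\sroot_1}(w,w)=\Hess(v_1,v_1)+2\Hess(v_1,Jv_2)+\Hess(Jv_2,Jv_2),
\]
and treat each term separately. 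The imaginary–imaginary term equals $\PP^{\sroot_1}(v_2,v_2)$ by the first assertion. The real–real term vanishes because Labourie's theorem provides a $C^1$ limit curve in $\P(\R^d)$ for every Hitchin representation, so $\Hff_{\sroot_1}\equiv 1$ on $\hitchin_d(S)$ and its Hessian along Hitchin directions is zero.

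For the mixed term, I use that by Theorem~\ref{h->h} every Hitchin point is critical for $\Hff_{\sroot_1}$ on the ambient complex character variety. Along a smooth path $s\mapsto\rho_s$ inside $\hitchin_d(S)$ with $\dot\rho_0=v_1$, this means the first derivative of $\Hff_{\sroot_1}$ in any $J$-direction vanishes identically in $s$; differentiating at $s=0$ gives $\Hess(v_1,Jv_2)=0$. Combining all three contributions, $\Hess\Hff_{\sroot_1}(w,w)=\PP^{\sroot_1}(v_2,v_2)$. Since the spectral gap pressure form $\PP^{\sroot_1}$ arises as a variance it is positive semi-definite, and its non-degeneracy on $\sf T\hitchin_d(S)$ proved in \cite{LiouvillePressure} upgrades this to positive definiteness. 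Hence the Hessian is strictly positive on directions with $v_2\neq 0$ and vanishes on $\sf T_\rho\hitchin_d(S)$, and the isolated-minimum statement follows.

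The main obstacle is arranging the cross-term and real-direction vanishings self-consistently. The real-direction vanishing rests on Labourie's $C^1$-regularity of the first-root limit curve, a nontrivial input beyond the main Hessian formula. The cross-term vanishing needs all Hitchin points near $\rho$ to be critical for $\Hff_{\sroot_1}$ as a function on the full complex character variety, not merely $\rho$ itself; this is the content of the \emph{critical-locus} formulation of Theorem~\ref{h->h}. With these two ingredients in place, assembling the pieces is essentially a bookkeeping computation.
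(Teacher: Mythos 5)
Your proposal is correct and follows essentially the same route as the paper: the first assertion is Theorem~\ref{tA} applied to $\Wedge_\sroot\circ\rho$, which is $(1,1,2)$-hyperconvex by Theorem~\ref{h->h}, and the second assertion is deduced from the first together with the non-degeneracy of $\PP^{\sroot_1}$ from \cite{LiouvillePressure}. The paper leaves the second deduction as a one-line remark; your bilinear expansion $w=v_1+Jv_2$, with the real--real term killed by Theorem~\ref{h=1} and the cross term killed by criticality of $\Hff_{\sroot_1}$ along the whole Hitchin locus, is exactly the bookkeeping it implicitly relies on.
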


The second statement follows directly from the first, together with the aforementioned non-degeneracy result by B.-Canary-Labourie-S. \cite{LiouvillePressure} for the spectral gap pressure form $\PP^{\sroot_1}.$ 

Corollary~\ref{cA} brings further evidence that the spectral gap pressure form is more geometric than the spectral radius pressure form, and shares more similarities to the classical Weil-Petersson metric. A key ingredient in its proof is the notion of $(1,1,2)$-hyperconvex representations, studied in P.-S.-W. \cite{PSW1} (see Theorem \ref{h->h}). These are representations $\rho:\G\to\PSL_d(\C)$ that are Anosov with respect to the first two simple roots and whose boundary maps satisfy an additional transversality condition (see Section \ref{s.112}). The main result of \cite{PSW1} then yields that, on the open set
$$\Anosov_{\{\sroot_1,\sroot_2\}}^\pitchfork\big(\G,\PGL_d(\C)\big)=\big\{\rho\in\frak X\big(\G,\PGL_d(\C)\big):\ (1,1,2)\textrm{-hyperconvex}\}$$
the Hausdorff dimension of the limit set $\xi^1_\rho(\bord\G)$ equals the \emph{critical exponent} $h^{\sroot_1}_\rho$ for the first root (see Section \ref{s.112} for the definition of the critical exponent) and is thus analytic. 

\begin{thmA}\label{tA} Let $\G$ be a word hyperbolic group with $\bord\G$ homeomorphic to a circle and let $\rho\in \Anosov_{\{\sroot_1,\sroot_2\}}^\pitchfork\big(\G,\PSL_d(\R)\big)$ be a regular point of the character variety $\frak X\big(\G,\PGL_d(\C)\big).$ Then for every differentiable curve $(\rho_t)_{t\in(-\eps,\eps)}\subset\frak X\big(\G,\PSL_d(\R)\big)$ with $\rho_0=\rho$ one has $$\Hess\Hff_{\sroot_1}(J\dot\rho)=\PP^{\sroot_1}(\dot\rho).$$
\end{thmA}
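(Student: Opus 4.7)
The plan is to adapt the Bridgeman--Taylor--McMullen argument from quasi-Fuchsian space to the higher-rank Anosov setting, using the identification $\Hff_{\sroot_1}(\rho)=h^{\sroot_1}(\rho)$ on $(1,1,2)$-hyperconvex representations from \cite{PSW1} together with the thermodynamic formalism of \cite{pressure}. The guiding idea is McMullen's: by holomorphy of a suitable complex eigenvalue cocycle, the first-order variation of the real cocycle $\phi_{\rho_t}$ in the purely imaginary direction $J\dot\rho$ vanishes while its second-order variation is the \emph{negative} of the second-order variation along the real direction $\dot\rho$. Feeding this into the pressure-zero equation isolates the variance of $\dot\phi$, which is precisely what defines $\PP^{\sroot_1}$.

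First I would establish criticality. Since $(1,1,2)$-hyperconvexity is open in $\frak X(\G,\PGL_d(\C))$, the identity $\Hff_{\sroot_1}\equiv h^{\sroot_1}$ extends to a neighborhood $U$ of $\rho$. For real $\rho'\in U$, the $C^1$-regularity of $(1,1,2)$-hyperconvex boundary maps from \cite{PSW1} together with the hypothesis $\bord\G\cong S^1$ forces $\xi^1_{\rho'}(\bord\G)\subset\P(\R^d)$ to be a $C^1$ embedded circle, hence $\Hff_{\sroot_1}(\rho')=1$ on the real slice. Combined with invariance under complex conjugation, this makes $\rho$ a critical point of $\Hff_{\sroot_1}$ in every direction of the complex character variety, so $\Hess\Hff_{\sroot_1}(J\dot\rho)$ is well defined.

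Next I would introduce the complex cocycle $\psi_{\rho'}(\g):=\log\lambda_1(\rho'(\g))$, where $\lambda_1$ is the leading eigenvalue of $\rho'(\g)$ (simple by the Anosov property) and the branch of $\log$ is chosen coherently on $U$; then $\psi$ is holomorphic in $\rho'$ and $\phi_{\rho'}=\operatorname{Re}\psi_{\rho'}$. Expanding along $\rho_t=\rho+tJ\dot\rho+O(t^2)$ and using that $\psi$ takes real values on the real slice,
$$\psi_t=\psi_0+it\,\dot\psi-\frac{t^2}{2}\ddot\psi+O(t^3),\qquad \phi_t=\phi_0-\frac{t^2}{2}\ddot\psi+O(t^3),$$
where $\dot\psi$ and $\ddot\psi$ are the two (real-valued) derivatives of $\psi$ in the real direction $\dot\rho$. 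Applying the variance expansion of topological pressure at the equilibrium state $\mu_0$ of $-\phi_0$ to the auxiliary real curve $\rho+s\dot\rho$ (on which $h\equiv 1$) produces $\int\dot\psi\,d\mu_0=0$ and $\int\ddot\psi\,d\mu_0=\Var(\dot\psi;\mu_0)$. By conjugation symmetry $h_t=1+t^2h_2+O(t^4)$ along the imaginary curve, and the order-$t^2$ coefficient of $P(-h_t\phi_t)=0$ reads
$$h_2=\frac{\int\ddot\psi\,d\mu_0}{2\int\phi_0\,d\mu_0}=\frac{\Var(\dot\psi;\mu_0)}{2\int\phi_0\,d\mu_0},$$
whence $\Hess\Hff_{\sroot_1}(J\dot\rho)=2h_2$ equals the expression defining $\PP^{\sroot_1}(\dot\rho)$ in \cite{pressure} (which for $h_0=1$ and $\dot h=0$ reduces to the variance-over-drift above).

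The main obstacle will be rigorously justifying the holomorphic-extension step: one must lift $\psi_{\rho'}$ from a function on periodic orbits to a Hölder function on $\UG$ depending holomorphically on $\rho'$, and verify that the variance formula for topological pressure extends to this complex-analytic family of potentials. This can be attacked via perturbation theory for the Ruelle transfer operator applied to complex potentials, or via a direct Livšic-type construction using the real-analyticity of the Anosov bundle structure; once it is in place, the substitution of the imaginary-direction expansion of $\phi_t$ into the pressure-zero equation is legitimate and the computation above closes.
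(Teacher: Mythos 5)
Your strategy is sound and, modulo the points below, the computation closes: it is essentially McMullen's original argument transplanted to the Anosov setting, and your final identity $\Hess\Hff(J\dot\rho)=2h_2=\Var(\dot\psi,m_0)/\int\phi_0\,dm_0=\PP(\dot F_0)$ agrees with what Theorem \ref{HessJ} gives for the real curve. Two remarks. First, a concrete error: the theorem concerns the simple root $\sroot_1$, so the relevant period is the spectral gap $(\lambda_1-\lambda_2)(\rho(\g))$, not the spectral radius. Your cocycle must be $\psi_{\rho'}(\g)=\log\big(\sigma_1(\rho'(\g))/\sigma_2(\rho'(\g))\big)$, the log of the ratio of the two leading eigenvalues (both simple by the $\{\sroot_1,\sroot_2\}$-Anosov condition, so this is still locally holomorphic in $\rho'$); with $\log\lambda_1$ alone you would be computing $\PP^{\omega_1}$, and moreover Theorem \ref{t.LC} identifies the Hausdorff dimension with $h^{\sroot_1}$, not with $h^{\omega_1}$, so the link to $\Hff_{\sroot_1}$ genuinely requires the gap. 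This is easily repaired but as written the objects do not match the statement.

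Second, the step you flag as ``the main obstacle'' is where your route diverges from the paper's, and the paper shows it can be avoided entirely. Rather than lifting $\psi$ to a holomorphically varying family of H\"older potentials on $\UG$ and justifying a second-order pressure expansion for complex-analytic families of potentials (transfer-operator perturbation theory with complex weights), the paper proves pluriharmonicity one level up: the dynamical intersection $\eta\mapsto\II^{\sroot_1}(\rho,\eta)$ is a locally uniform limit of the truncated orbit sums in equation \eqref{phi-inter}, each of which is already the real part of a holomorphic function of $\eta$ (precisely because $(\lambda_1-\lambda_2)(\eta(\g))=\mathrm{Re}\log(\sigma_1/\sigma_2)(\eta(\g))$), hence pluriharmonic by Axler--Bourdon--Ramey (Proposition \ref{pluriharmonic}). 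The whole computation is then carried out on Hessians of the critical functions $h^{\sroot_1}$, $\II^{\sroot_1}_\rho$ and $\JJ^{\sroot_1}$, using only $\PP^{\sroot_1}(Jv)=0$ (the conjugation symmetry of Lemma \ref{degenerate}) and $\Hess\II(Jv)=-\Hess\II(v)$; the thermodynamic input is needed only along real one-parameter families, where real-analytic dependence of the potentials (Corollary \ref{potentialC}) suffices. Your approach is workable but leaves its hardest analytic step unproven, and working with Hessians of critical functions (rather than raw second $t$-derivatives of $\psi_t$) also disposes cleanly of the $d\psi(\ddot\rho_0)$ term that your expansion of $\phi_t$ silently discards.
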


Thanks to the work of \cite{PSW1,PSW2}, Theorem \ref{tA}  applies not only to Hitchin components but to $\Theta$-positive Anosov representations into indefinite orthogonal groups \cite{GWpos}. However, in all these cases, we do not know for which roots $\sroot$ the associated pressure form is non-degenerate. 

\begin{corA}\label{c.hrtt}Let $\cal P\big(\G, \SO_0(p,q)\big)$ denote the set of $\Theta$-positive Anosov representations.  Then for every $v\in\sf T\cal P\big(\G, \SO_0(p,q)\big)$ and every $\sroot\in\{\sroot_1,\ldots, \sroot_{p-2}\}$ one has$$
\Hess\Hff_\sroot(Jv)=\PP^{\sroot}(v).$$
\end{corA}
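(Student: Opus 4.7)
The plan is to reduce Corollary \ref{c.hrtt} to Theorem \ref{tA} via the functoriality of Hausdorff dimensions, pressure forms, and the complex structure $J$ under exterior power representations. Fix $\sroot=\sroot_k$ with $k\in\{1,\ldots,p-2\}$, set $N=\binom{p+q}{k}$, and consider $\Wedge^k:\SO_0(p,q)\to\SL_N(\R)$, which extends to a holomorphic homomorphism $\SO_{p+q}(\C)\to\SL_N(\C)$. Under the Pl\"ucker embedding $\cal F_{\sroot_k}(\SO_0(p,q))\hookrightarrow\P(\Wedge^k\R^{p+q})$, the boundary curve $\xi^{\sroot_k}_\rho$ corresponds to the first-root boundary curve of $\Wedge^k\circ\rho$, so their Hausdorff dimensions coincide; and the fundamental weight $\omega_{\sroot_k}$ pulls back from the first fundamental weight of $\SL_N$, so $\PP^{\sroot_k}(v)=\PP^{\sroot_1}\bigl(d\Wedge^k_*(v)\bigr)$. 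Because $\Wedge^k$ is a holomorphic group homomorphism, its differential intertwines the complex structures $J$ on the two character varieties. Hence the identity to be proved is exactly Theorem \ref{tA} applied to $\Wedge^k\circ\rho$, provided its hypotheses hold.

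The hypotheses of Theorem \ref{tA} amount to three items. First, $\bord\G\cong\bS^1$ is automatic for $\Theta$-positive Anosov representations in $\SO_0(p,q)$ by \cite{GWpos}, since $\Theta$-positivity forces $\G$ to be a (mild generalization of a) surface group. Second, the regularity of $\Wedge^k\circ\rho$ inside $\frak X\bigl(\G,\PGL_N(\C)\bigr)$ follows from the Zariski density of $\rho$ in $\SO_0(p,q)$, combined with the fact that $\Wedge^k$ applied to the standard representation remains irreducible in the range $k\leq p-2$. Third, and most importantly, one needs $\Wedge^k\circ\rho$ to be $(1,1,2)$-hyperconvex with respect to the first simple root of $\SL_N$. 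This is precisely the hyperconvexity content established in \cite{PSW1,PSW2} for $\Theta$-positive representations: the transversality properties of the positive boundary curve at level $k$ translate, via the Pl\"ucker embedding, into exactly the $(1,1,2)$-condition for the first root of $\SL_N$. The limitation $k\leq p-2$ is precisely the range in which the required transversality holds.

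The main obstacle I foresee is the bookkeeping needed to match the ingredients on both sides of the Pl\"ucker embedding. The pressure form $\PP^{\sroot_k}$ is built from the thermodynamic formalism applied to the $\omega_{\sroot_k}$-component of the Cartan projection; the Hessian $\Hess\Hff_{\sroot_k}$ is computed inside the complexification $\frak X(\G,\SO_{p+q}(\C))$; and the complex structure $J$ is inherited from the Lie group $\SO_{p+q}(\C)$ rather than being defined intrinsically. Each compatibility—between fundamental weights under $\Wedge^k$, between Hausdorff dimensions under Pl\"ucker, and between the complex structures under the holomorphic homomorphism—is standard in isolation, but assembling them coherently to arrive at
\[
\Hess\Hff_{\sroot_k}(Jv)\;=\;\Hess\Hff_{\sroot_1}\bigl(J\,d\Wedge^k_*(v)\bigr)\;=\;\PP^{\sroot_1}\bigl(d\Wedge^k_*(v)\bigr)\;=\;\PP^{\sroot_k}(v)
\]
requires careful verification that the Hessian transports correctly under the complex-linear differential $d\Wedge^k_*$. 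Once this functorial bookkeeping is in place and the hyperconvexity input from \cite{PSW1,PSW2} is invoked, Corollary \ref{c.hrtt} is a direct consequence of Theorem \ref{tA}.
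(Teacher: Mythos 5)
Your proposal is correct and takes essentially the same route as the paper, which deduces Corollary~\ref{c.hrtt} from Theorem~\ref{tA} by composing with the Tits representation $\Wedge_\sroot$ and invoking the $(1,1,2)$-hyperconvexity of $\Wedge_\sroot\rho$ established in \cite[Theorem 10.1]{PSW2}. One small correction: the spectral gap pressure form is built from the simple root $\sroot_1=\lambda_1-\lambda_2$ rather than the first fundamental weight, and the compatibility you need is the identity $\sroot_1\Big(\sigma\big(\Wedge_\sroot(g)\big)\Big)=\sroot\Big(\sigma\big(g\big)\Big)$ recorded just before Proposition~\ref{TitsReps}.
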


The second main result of the paper is a generalization of B. \cite{martincriticos} to Anosov representations of word hyperbolic groups that are not necessarily virtual surface groups.  

\begin{thmA}\label{tB} Let $\G$ be a word hyperbolic group and let $\rho\in\frak X_{\{\sroot_1,\sroot_2\}}^\pitchfork\big(\G,\PSL_d(\C)\big)$ be a smooth point. Assume moreover that 
$$\Hff_{\sroot_1}:\Anosov_{\{\sroot_1,\sroot_2\}}^\pitchfork\big(\G,\PSL_d(\C)\big)\to\R_+$$
 is critical at $\rho.$ Then $\Hess_\rho\Hff_{\sroot_1}$ is positive semidefinite on a subspace of dimension at least half the real dimension of $\Anosov_{\{\sroot_1,\sroot_2\}}^\pitchfork\big(\G,\PSL_d(\C)\big).$
If, furthermore, the pressure form $\PP^{\sroot_1}$ is non-degenerate, then there are no local maxima.\end{thmA}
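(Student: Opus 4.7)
The plan is to reduce Theorem \ref{tB} to establishing, at any critical point $\rho$ of $\Hff_{\sroot_1}$ in the $(1,1,2)$-hyperconvex open set, the identity
$$\Hess_\rho\Hff_{\sroot_1}(v)+\Hess_\rho\Hff_{\sroot_1}(Jv)=\PP^{\sroot_1}(v) \qquad (\star)$$
for every $v\in\sf T_\rho\Anosov_{\{\sroot_1,\sroot_2\}}^\pitchfork(\G,\PSL_d(\C))$, where $\PP^{\sroot_1}$ is the spectral gap pressure form extended to this open set via the equality $\Hff_{\sroot_1}=h^{\sroot_1}_\rho$ of \cite{PSW1}. The identity $(\star)$ is the natural generalization of Theorem \ref{tA} away from a distinguished real slice: at a Hitchin point $\rho\in\hitchin_d(S)$, $\Hff_{\sroot_1}$ is constant along real tangent directions so $\Hess_\rho\Hff_{\sroot_1}(v)=0$ for real-tangent $v$, and $(\star)$ collapses to $\Hess_\rho\Hff_{\sroot_1}(Jv)=\PP^{\sroot_1}(v)$, which is Theorem \ref{tA}.

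To prove $(\star)$ I would mimic the argument of B.~\cite{martincriticos} in the quasi-Fuchsian setting, phrased in the language of pressure for H\"older cocycles underlying the construction of $\PP^{\sroot_1}$. Writing $h^{\sroot_1}_\rho$ implicitly as the unique zero of $t\mapsto P(-t\cdot\phi^{\sroot_1}_\rho)$ and differentiating twice along a smooth complex path $\rho_t$, the criticality of $\rho$ kills the first-order terms. The remaining second-order contributions split into the variance of the first cocycle-derivative, which is $\PP^{\sroot_1}(v)$, and a mean term involving the second cocycle-derivative $\ddot\phi^{\sroot_1}_v$. The key input is that $\phi^{\sroot_1}_\rho(\gamma)=\log|\lambda_1(\rho(\gamma))/\lambda_2(\rho(\gamma))|$ is the real part of a holomorphic function of $\rho$, so its second derivative satisfies $\ddot\phi^{\sroot_1}_{Jv}=-\ddot\phi^{\sroot_1}_v$. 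Consequently the mean term cancels when we add the formulas for $v$ and $Jv$, leaving only the variance contribution and producing $(\star)$.

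Granting $(\star)$, set $H:=\Hess_\rho\Hff_{\sroot_1}$. Positivity of the pressure form gives $H(v)+H(Jv)\geq 0$ for every $v$. Any subspace $U$ on which $H$ is strictly negative satisfies $U\cap JU=\{0\}$, because for $v\in U\setminus\{0\}$ the inequality $H(v)<0$ forces $H(Jv)>0$ and hence $Jv\notin U$. Writing $2n:=\dim_\R\Anosov_{\{\sroot_1,\sroot_2\}}^\pitchfork(\G,\PSL_d(\C))$, this bounds $\dim_\R U\leq n$, and Sylvester's law of inertia then produces a subspace of real dimension at least $n$ on which $H$ is positive semidefinite, exactly half the real dimension of the Anosov open set.

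For the final statement, suppose $\PP^{\sroot_1}$ is non-degenerate and, for contradiction, that $\rho$ is a local maximum of $\Hff_{\sroot_1}$, so $H\leq 0$. Then $(\star)$ yields $\PP^{\sroot_1}(v)=H(v)+H(Jv)\leq 0$, and combined with $\PP^{\sroot_1}\geq 0$ gives $\PP^{\sroot_1}(v)=0$ for every $v$; polarization then forces $\PP^{\sroot_1}\equiv 0$, contradicting non-degeneracy. The main obstacle is the cancellation in $(\star)$: unlike in Theorem \ref{tA} where the real Hitchin slice makes many derivatives vanish automatically, at an arbitrary smooth critical point one must carefully exploit the fact that $\phi^{\sroot_1}_\rho$ is the real part of a holomorphic function of $\rho$ to obtain the sign $\ddot\phi^{\sroot_1}_{Jv}=-\ddot\phi^{\sroot_1}_v$ and the ensuing cancellation of the mean term.
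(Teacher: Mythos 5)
Your strategy is in essence the paper's: pluriharmonicity (in $\rho$) of the ``mean term'' --- in the paper this is packaged as pluriharmonicity of the dynamical intersection $\II^{\sroot_1}_\rho$, which up to normalization is exactly $\int\ddot\phi\,dm$ --- cancels that contribution when the second-derivative formulas for $v$ and $Jv$ are summed, and positivity of the pressure form does the rest. Your endgame ($U\cap JU=\{0\}$ bounds the negative index, Sylvester gives the half-dimensional subspace, and a local maximum forces $\PP^{\sroot_1}\equiv0$ by polarization) is a clean equivalent of the paper's computation of the $(p,2k,p)$ signature of $\Hess_\rho\II^{\sroot_1}_\rho$.

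There is, however, an error in the central identity $(\star)$: it is not what your computation produces. Differentiating $P(-h(\rho_t)\phi_{\rho_t})=0$ twice at a critical point of $h=\Hff_{\sroot_1}$ gives
$$\Hess_\rho h(w)=\frac{h^2\Var(\dot\phi_w,m)-h\int\ddot\phi_w\,dm}{\int\phi\,dm},$$
and the variance term for $w=Jv$ is (up to the same normalization) $\PP^{\sroot_1}(Jv)$, \emph{not} $\PP^{\sroot_1}(v)$; there is no reason for $\Var(\dot\phi_{Jv},m)$ to coincide with $\Var(\dot\phi_v,m)$. Summing therefore yields
$$\Hess_\rho\Hff_{\sroot_1}(v)+\Hess_\rho\Hff_{\sroot_1}(Jv)=\Hff_{\sroot_1}(\rho)\Big(\PP^{\sroot_1}(v)+\PP^{\sroot_1}(Jv)\Big),$$
not $(\star)$. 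Your consistency check against Theorem \ref{tA} fails to detect this because at a real point with $v$ real-tangent one has $\PP^{\sroot_1}(Jv)=0$ (Lemma \ref{degenerate}) and $\Hff_{\sroot_1}(\rho)=1$; at a general critical point in $\Anosov_{\{\sroot_1,\sroot_2\}}^\pitchfork\big(\G,\PSL_d(\C)\big)$ neither holds. Fortunately the mistake is harmless for your conclusions: since $\PP^{\sroot_1}\geq0$, the corrected identity still gives $H(v)+H(Jv)\geq0$, which is all the first conclusion uses, and a local maximum still forces $\PP^{\sroot_1}(v)=\PP^{\sroot_1}(Jv)=0$ for every $v$, hence degeneracy. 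A smaller point: the reparametrizing function $f^{\sroot_1}_\rho$ is only canonical up to Liv\v sic cohomology, so the claim that $\phi^{\sroot_1}_\rho$ is pointwise on $\UG$ the real part of a holomorphic function of $\rho$ needs care; the paper sidesteps this by proving pluriharmonicity of $\eta\mapsto\II^{\sroot_1}(\rho,\eta)$ directly, as a locally uniform limit of finite sums of real parts of the holomorphic functions $\eta\mapsto\log\big(\lambda_1/\lambda_2\big)\big(\eta(\g)\big)$, which is all the cancellation of the mean term requires.
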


This theorem applies  in particular to all convex cocompact Kleinian groups in $\PSL_2(\C)$, a result not covered by \cite{martincriticos}. In general, admitting $(1,1,2)$-hyperconvex representations is a relatively restrictive assumption on the group $\G$, as, for example, it implies that its boundary has dimension smaller than 2. However there are many classes of subgroups of $\PSL_2(\C)$ admitting $(1,1,2)$-hyperconvex representations with Zariski dense  image in $\PSL_d(\C).$ It is not yet known if the spectral gap pressure form is non-degenerate for such representations, and we hope that Theorem \ref{tB} will encourage research in this direction.

\medskip 

The proof of both Theorems \ref{tA} and \ref{tB} relies on pluriharmonicity of length functions. The mechanism behind both proofs is relatively versatile, and can be applied in many situations. As an example we use it in Corollary \ref{cC} to prove that, at a Fuchsian representation, the critical exponent $h^{\omega_1}(\rho)$ relative to the first fundamental weight necessarily increases along purely imaginary deformations (see Section \ref{s.5.5} for details).

Our last result is another application of the previous techniques in a rank one situation. Recall that a representation $\rho:\G\to\PU(n,1)$ is convex-cocompact if and only if it is projective Anosov when $\PU(n,1)$ is considered as a subgroup of $\PGL_{n+1}(\C)$ through the standard inclusion. Moreover, for $\g\in\G$ the real length of the associated closed geodesic is the spectral radius $\omega_1\Big(\lambda\big(\rho(\g)\big)\Big).$ 

If $\rho:\G\to\PU(n,1)$ is convex co-compact then we denote by $\Hff_{\bord\C\HH^n}(\rho)$ the Hausdorff dimension the limit set of $\G$ in the visual boundary of the complex-hyperbolic space, with respect to a visual metric. A celebrated result by Sullivan \cite{sullivan} in real-hyperbolic space, and further extended to arbitrary negatively curved manifolds by Yue \cite{yue}, asserts that if $\rho:\G\to\PU(n,1)$ is convex-co-compact then $$\Hff_{\bord\C\HH^n}(\G)=h^{\omega_1}(\rho).$$

Assume now that $\G$  is a co-compact lattice of $\PSO(n,1)$ and denote by $\iota$ the inclusion $$\iota:\G\to\PSU(n,1),$$ where the inclusion of $\PSO(n,1)\to\PSU(n,1)$ is given by extending the coefficients. Bourdon \cite{bourdon2} proved that $\iota$ is a global rigid minima for $\Hff_{\bord\C\HH^n}$ among convex co-compact representations of $\G$ in $\PU(n,1).$ In section \ref{prooftC} we prove the following strengthening.

\begin{thmA}\label{thmComplex}Assume $\iota$ is a regular point of the character variety $\frak X\big(\G,\PSU(n,1)\big),$ then $\Hess_\iota\Hff_{\bord\C\HH^n}$ is positive definite in any direction not tangent to $\frak X\big(\G,\PSO(n,1)\big).$
\end{thmA}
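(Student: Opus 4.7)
The plan is to reduce to the complex group $\PGL_{n+1}(\C)$ and invoke the pluriharmonicity mechanism driving Theorems \ref{tA}--\ref{tB}. Via the chain $\PSO(n,1)\subset\PSU(n,1)\subset\PGL_{n+1}(\C)$, the inclusion $\iota$ is convex cocompact, hence projective Anosov in $\PGL_{n+1}(\C)$; its limit set in $\C\P^n$ is a round $(n-1)$-sphere contained in $\bord\C\HH^n$, whose Frenet-type regularity yields $(1,1,2)$-hyperconvexity. The main result of \cite{PSW1} then gives $\Hff_{\sroot_1}=h^{\sroot_1}$ in a neighbourhood of $\iota$, while the Sullivan--Yue theorem cited in the excerpt gives $\Hff_{\bord\C\HH^n}=h^{\omega_1}$ on the convex cocompact part of $\frak X(\G,\PSU(n,1))$. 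Since the two critical exponents differ only by a normalisation constant relating the Cartan subalgebras of $\PU(n,1)$ and $\PGL_{n+1}(\C)$, the two Hausdorff dimension functions are proportional on the $\PSU(n,1)$-slice, and it suffices to prove the analogous Hessian statement for $\Hff_{\sroot_1}$.

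Next I would use the real Lie-algebra decomposition
$$\su(n,1)=\so(n,1)\oplus i\,\mathfrak q,\qquad \mathfrak q\subset\sl_{n+1}(\R),$$
where $\mathfrak q$ is the real subspace singled out by the identity $\su(n,1)\cap\sl_{n+1}(\R)=\so(n,1)$. Passing to $\iota$-twisted group cohomology (the regularity hypothesis guarantees that tangent spaces to the character variety are identified with $H^1$) produces a decomposition of $T_\iota\frak X(\G,\PSU(n,1))$ in which a direction $v$ is transverse to $T_\iota\frak X(\G,\PSO(n,1))$ precisely when $v=Jw$ for some $w\in H^1(\G,\mathfrak q)\subset T_\iota\frak X(\G,\PSL_{n+1}(\R))$, with $J$ the complex structure on $\frak X(\G,\PGL_{n+1}(\C))$. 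Applying the same pluriharmonic length-function computation used to prove Theorems \ref{tA}--\ref{tB} at the critical point $\iota$ (critical along $\PSU(n,1)$-directions by Bourdon's rigidity and along $\PSL_{n+1}(\R)$-directions by a complex-conjugation symmetry) then yields
$$\Hess_\iota\Hff_{\sroot_1}(Jw)=\PP^{\sroot_1}(w),$$
so $\Hess_\iota\Hff_{\bord\C\HH^n}(v)$ is a positive multiple of $\PP^{\sroot_1}(w)$.

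The main obstacle is showing $\PP^{\sroot_1}(w)>0$ for every nonzero $w\in H^1(\G,\mathfrak q)$: since $\iota(\G)$ is not Zariski dense in $\PSL_{n+1}(\R)$, non-degeneracy of the pressure form on this real character variety is not a general off-the-shelf statement. As $\PP^{\sroot_1}$ is a variance form, its vanishing on $w$ is equivalent to $w$ preserving the marked length spectrum of $\iota$ to first order. I would exclude this by a Livsic-type cohomological argument: such a length-preserving infinitesimal deformation of an Anosov representation must be cohomologous to a cocycle valued in the centraliser of the Cartan projection, and because the Zariski closure of $\iota(\G)$ equals $\PSO(n,1)$, this forces the cohomology class of $w$ to lie in the $\so(n,1)$-summand $H^1(\G,\so(n,1))$. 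Since $w$ was chosen in the complementary summand $H^1(\G,\mathfrak q)$, it must vanish, contradicting the transversality of $v=Jw$. Combining this strict positivity with the displayed identity completes the proof.
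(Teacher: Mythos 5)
Your overall architecture (Lie-algebra splitting $\su(n,1)=\so(n,1)\oplus i\s$, pluriharmonicity of the dynamical intersection, a symmetry argument killing the pressure form in imaginary directions, and non-degeneracy of a pressure form in the transverse real directions) is the paper's architecture, but two of your key steps have genuine gaps. First, the identity $\Hess_\iota\Hff(Jw)=\PP(w)$ is the Theorem~\ref{tA} identity, and its derivation there uses that the entropy is \emph{locally constant} on the real locus ($h^{\sroot_1}\equiv 1$ by Theorem~\ref{h=1}, which requires $\bord\G\cong S^1$). Here $\bord\G\cong S^{n-1}$ and $h^{\omega_1}$ is not locally constant on $\frak X\big(\G,\PSL_{n+1}(\R)\big)$ near $\iota$; it is only \emph{critical} there, and the expansion of $\JJ^{\omega_1}$ gives instead
$$\Hess_\iota h^{\omega_1}(Jw)=\PP^{\omega_1}_\iota(w)-\Hess_\iota h^{\omega_1}(w).$$
Positivity therefore needs the extra input $\Hess_\iota h^{\omega_1}(w)\le 0$, and indeed the criticality of $h^{\omega_1}$ in real directions (which you misattribute to a complex-conjugation symmetry --- that symmetry fixes the real locus and says nothing about real directions). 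Both facts come from the entropy rigidity theorem of Crampon/Potrie--S.\ (Theorem~\ref{h<}: $h^{\omega_1}(\rho)\le n-1$ for convex divisible representations, with equality only on $\PSO(n,1)$), which you never invoke. Relatedly, your reduction to $\sroot_1$ via $(1,1,2)$-hyperconvexity is a wrong turn: $\sroot_1$ and $\omega_1$ coincide on the $\PSU(n,1)$-slice but not on $\frak X\big(\G,\PSL_{n+1}(\R)\big)$, where all the rigidity and non-degeneracy inputs live; the paper works with $\omega_1$ throughout, using Sullivan--Yue directly.

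Second, your non-degeneracy argument for the pressure form at $\iota$ does not work as stated. An infinitesimal deformation preserving the marked length spectrum to first order is not ``cohomologous to a cocycle valued in the centraliser of the Cartan projection,'' and it is certainly not forced into $H^1_\iota\big(\G,\so(n,1)\big)$: deformations inside $\PSO(n,1)$ (e.g.\ Teichm\"uller deformations for $n=2$) change lengths, so membership in that summand is neither necessary nor sufficient for length-preservation. The paper's route is: entropy is critical at $\iota$ (Theorem~\ref{h<} again), so by Proposition~\ref{0} degeneracy of $\PP^{\omega_1}$ at $w$ forces $d_\iota\omega_1^\g(w)=0$ for all $[\g]$, and then \cite[Prop.\ 10.1]{pressure} --- valid for any irreducible, regular, projective Anosov representation, with no Zariski-density needed --- says these differentials span the cotangent space, whence $w=0$. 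You also silently use that $\iota$ is a regular point of $\frak X\big(\G,\PSL_{n+1}(\R)\big)$; this follows from regularity in $\frak X\big(\G,\PSU(n,1)\big)$ by Cooper--Long--Thistlethwaite, but it needs to be said.
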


If $n>2$ then Mostow's classical rigidity result states that $\iota$ is an isolated point of $\frak X\big(\G,\PSO(n,1)\big)$ so Theorem \ref{thmComplex} implies that the Hessian of the Hausdorff dimension at $\iota$ is positive definite.

\begin{obs}
If $\rho$ is a small deformation of $\iota$ in $\PSU(n,1)$ then it is a convex co-compact subgroup of $\PSU(n,1)$ and moreover, by P.-S.-W. \cite[Corollary 8.5]{PSW1}, the Hausdorff dimension of the limit set for a \emph{Riemannian metric} on $\bord\C\HH^n$ also coincides with $h^{\omega_1}(\rho),$ so in Theorem \ref{thmComplex} we can consider $\Hff_{\bord\C\HH^n}$ either as the Hausdorff dimension for a visual metric or for a Riemannian metric. \end{obs}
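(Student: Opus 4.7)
The plan is to prove the Observation in three steps, corresponding to its three claims: (i) convex cocompactness of nearby representations, (ii) equality of the Riemannian Hausdorff dimension of the limit set with $h^{\omega_1}(\rho)$, and (iii) consistency of (ii) with the visual-metric statement already recalled before Theorem \ref{thmComplex}.

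For (i), under the standard embedding $\PSU(n,1)\hookrightarrow\PGL_{n+1}(\C)$ the representation $\iota$ is $\sroot_1$-Anosov because it is convex cocompact in $\PSU(n,1)$: $\G$ acts cocompactly on the totally geodesic copy $\mathbb H^n_\R\subset\C\HH^n$, which is precisely the convex hull of its limit set. The $\sroot_1$-Anosov property is open in $\frak X\big(\G,\PGL_{n+1}(\C)\big)$, so every small deformation $\rho$ of $\iota$ in $\PSU(n,1)$ remains $\sroot_1$-Anosov, and hence convex cocompact as a subgroup of $\PSU(n,1)$.

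For (ii), I would verify that $\iota$, viewed in $\PGL_{n+1}(\C)$, is $(1,1,2)$-hyperconvex, so that \cite[Corollary 8.5]{PSW1} applies. At $\iota$ the boundary map sends $\bord\G\subset\bord\mathbb H^n_\R$ to the corresponding isotropic complex lines of the Hermitian form; distinct pairs of such lines are transverse, and the remaining condition amounts to showing that for pairwise distinct $x,y,z\in\bord\mathbb H^n_\R$ the line $\xi^1(z)$ is not contained in the complex $2$-plane spanned by $\xi^1(x)$ and $\xi^1(y)$. This is a direct consequence of the fact that $\mathbb H^n_\R\subset\C\HH^n$ is a maximal totally real totally geodesic submanifold: three distinct real isotropic directions span a real $3$-plane which is totally real, and therefore remain linearly independent over $\C$. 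Openness of $(1,1,2)$-hyperconvexity among Anosov representations, established in \cite{PSW1}, then transfers this property to every sufficiently small deformation $\rho$, and \cite[Corollary 8.5]{PSW1} identifies the Hausdorff dimension of $\xi^1_\rho(\bord\G)$ with respect to a Riemannian metric on $\bord\C\HH^n$ with the critical exponent $h^{\omega_1}(\rho)$.

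Step (iii) is just Sullivan--Yue \cite{sullivan, yue}, as recalled in the paragraph preceding Theorem \ref{thmComplex}: since $\rho$ is convex cocompact by (i), the Hausdorff dimension for a visual metric also equals $h^{\omega_1}(\rho)$. Combining this with (ii) yields the Observation. The only nontrivial point is the $(1,1,2)$-hyperconvexity of $\iota$; the remaining ingredients are either openness statements or quoted directly from the literature.
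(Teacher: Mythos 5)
Your steps (i) and (iii) are unproblematic: the paper itself recalls that convex cocompactness in $\PU(n,1)$ is equivalent to being projective Anosov under the standard embedding into $\PGL_{n+1}(\C)$, the Anosov condition is open, and Sullivan--Yue handles the visual metric. The genuine gap is in step (ii). For $n\geq 3$ the composition $\G\to\PSU(n,1)\to\PGL_{n+1}(\C)$ is \emph{not} $\{\sroot_1,\sroot_2\}$-Anosov, so it cannot be $(1,1,2)$-hyperconvex in the sense used here and in \cite{PSW1}: a loxodromic element of $\PU(n,1)$ has Jordan projection $(\lambda_1,0,\dots,0,-\lambda_1)$, with $n-1\geq 2$ middle entries equal to $0$, hence $\sroot_2\big(\lambda(\rho(\g))\big)=\lambda_2-\lambda_3\equiv 0$ and the defining inequality \eqref{def1} fails for $\sroot_2$. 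Consequently the map $\xi^{d-2}=\xi^{n-1}$ occurring in the definition does not exist as an Anosov boundary map, and what you actually verify --- that three distinct isotropic lines in the limit set span a $3$-dimensional subspace --- coincides with the $(1,1,2)$-hyperconvexity condition only when $d-2=1$, i.e.\ only for $n=2$. Your argument therefore establishes the Observation for $\PU(2,1)$ but not beyond.

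For $n\geq 3$ one cannot reduce to Theorem \ref{t.LC}; the paper does not attempt to, and simply quotes \cite[Corollary 8.5]{PSW1}, which is a separate rank-one statement whose content is exactly the point your reduction skips. The action of $\PU(n,1)$ on $\bord\C\HH^n\cong S^{2n-1}$ is genuinely non-conformal for a Riemannian metric: at the attracting fixed point a loxodromic element contracts the contact distribution at rate $e^{-\lambda_1}$ and the transverse (chain) direction at rate $e^{-2\lambda_1}$, while the visual metric is comparable to the Riemannian one along the contact distribution and to its square root transversally. Thus the equality of the Riemannian Hausdorff dimension with $h^{\omega_1}(\rho)$ amounts to showing that the limit set of a small deformation of $\iota$ remains suitably tangent to the contact distribution (it is a deformation of the Legendrian sphere $\bord\HH^n_\R\subset\bord\C\HH^n$), which is an assertion about the position of the limit set relative to the chain geometry, not a transversality statement about triples of lines in $\P(\C^{n+1})$. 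A minor additional point you use implicitly and should justify is the identification $h^{\sroot_1}=h^{\omega_1}$; it does hold here precisely because $\lambda_2\equiv 0$ on $\PU(n,1)$.
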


\subsection*{Outline of the paper}
In Section \ref{s.2} we discuss the background on Anosov representations needed in the paper: after reviewing the  basic definitions we discuss, in Section \ref{s.112}, the results of \cite{PSW1} which singled out   $(1,1,2)$-hyperconvex representations. In Section \ref{s.hrtt} we recall the basic facts about higher rank Teichm\"uller theories needed to deduce Corollaries \ref{cA} and \ref{c.hrtt} from Theorem \ref{tA}; finally  in Section \ref{s.rep} we discuss an important  dynamical viewpoint on Anosov representations:  these can be thought of as reparametrizations of the geodesic flow of $\G$, and  are thus amenable to the thermodynamic formalism. In Section \ref{s.3} we discuss the thermodynamic formalism needed to define, in Section \ref{b2}, the pressure forms. We conclude the paper in Section \ref{s.proofs} introducing the main technical tool of the paper, pluriharmonicity of dynamical intersection, which is directly used to prove Theorems \ref{tA}, \ref{tB} and \ref{thmComplex}.

\section{Anosov representations}\label{s.2}
In this section we introduce the necessary background on Anosov representations, and recall how they give rise to reparametrizations of the geodesic flow. 
\subsection{Basic notions}
We recall the Cartan and the Jordan-Lyapunov projections and the characterization of Anosov representations we are going to use. 

Let $\sf G$ be a semisimple real-algebraic Lie group of non-compact type with finite center, for $\K=\R$ or $\C$ denote by $\sf G_\K$ the group of the $\K$-points of $\sf G.$ Fix a maximal compact subgroup $\sf K<\sf G_\K$ with Lie algebra $\frak t$. We denote by $\sf E<\frak t^\bot$ a Cartan subalgebra, and by $\Delta\subset \sf E^*$ a choice of simple roots. This corresponds to the choice of a Weyl chamber in $\sf E$, which we will denote by $\sf E^+$. In the case $\sf G=\PSL_d$ we identify $\sf E^+$ with $$\big\{(x_1,\ldots,x_d)\in \R^d| x_1\geq \ldots \geq x_d, \sum x_i=0\big\}.$$ 

Every element $g\in\sf G_\K$ can be written as a product $$g=k_1\exp\big(\sigma(g)\big) k_2$$ 
for $k_1,k_2\in\sf K$ and  a unique element $\sigma(g)\in \sf E^+$, the \emph{Cartan projection} of $g$. If $\sf G_\R=\PSL_d(\R)$, the numbers $\sigma_i(g)$ are the logarithms of the square roots of the eigenvalues of the symmetric matrix $g^tg$. If $\sroot\in\Delta$ then we denote by $\omega_\sroot$ its associated fundamental weight. 

Let $\Theta \subset \Delta$ be a subset of simple roots. We denote by $\sf P_\Theta<\sf G$ the associated parabolic subgroup, by $\wk{\sf P}_\Theta$ the opposite associated parabolic group and by
$$\sf E_\Theta:=\bigcap_{\sroot\in\Delta\setminus \Theta}\ker(\sroot )$$ the Lie algebra of the center of the Levi group $\sf P_\Theta\cap\wk{\sf P}_\Theta.$ It comes equipped with the natural projection \begin{equation}\label{projection}p_\Theta:\sf E\to\sf E_\Theta\end{equation} parallel to $\bigcap_{\sroot\in\Theta}\ker\sroot.$ Finally let $\sf E_\Theta^*<\sf E^*$ be the subspace generated by the fundamental weights associated to elements in $\Theta$
 $$\sf E_\Theta^*:=\langle\omega_\sroot|\;\sroot\in\Theta\rangle=\{\varphi\in\sf E^*|\;\varphi\circ p_\Theta=\varphi\}.$$ One has that $\sf E_\Delta=\sf E$ and $\sf P_\Delta$ is a minimal parabolic subgroup.

Let $\G$ be a finitely generated discrete group and denote by $|\,|$ the word length for a fixed finite symmetric generating set. 

\begin{defi}\label{defAnosov} Let $\Theta\subset\Delta$. A representation $\rho:\G\to\sf G_\K$ is \emph{$\Theta$-Anosov} if there exist positive constants $c,\mu$ such that for all $\g\in\G$ and $\sroot\in\Theta$ one has 
\begin{equation}\label{def1}
\sroot\Big(\sigma\big(\rho(\g)\big)\Big)\geq \mu|\g|-c.
\end{equation} 
A $\{\sroot_1\}$-Anosov representation $\rho:\G\to\PGL_d(\K)$ will be called \emph{projective Anosov}.
\end{defi}
Note that this is not the original definition given in Labourie and Guichard-W. \cite{Labourie-Anosov, GW-Domains}, but a characterization due to Kapovich-Leeb-Porti and Bochi-Potrie-S. \cite{KLP-Morse,BPS}. Note also that there is a recent characterization by Kassel-Potrie \cite{KP} only in terms of the Jordan-Lyapunov projection (see below for the definition) rather than the Cartan projection.

Anosov representations are quasi-isometric embeddings, thus in particular they are injective and have discrete image. It was proven in \cite{KLP-Morse} (see also \cite{BPS}) that only  word hyperbolic groups admit Anosov representations; we will denote by $\bord \G$ the Gromov boundary of the group $\G$. 

A key property of Anosov representations is the existence of equivariant boundary maps with good dynamical properties \cite{Labourie-Anosov,  GW-Domains, GGKW, KLP-Morse, BPS}. With our definition, the existence of boundary maps for such representations is a Theorem of \cite{KLP-Morse} and \cite{BPS}. From now on we will restrict ourselves, without loss of generality, to self-opposite subsets $\Theta\subset \Delta$ 

 \begin{thm}[Kapovich-Leeb-Porti \cite{KLP-Morse}]
Let $\rho:\G\to\sf G_\K$ be $\Theta$-Anosov. Then there exist a unique dynamics preserving, continuous, transverse equivariant boundary map 
$$\xi_\rho^{\Theta}:\bord\G\to \sf G_\K/\sf P_\Theta.$$ 
\end{thm}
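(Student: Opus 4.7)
The plan is to construct $\xi^\Theta_\rho$ directly from the Cartan decomposition of $\rho(\g)$, exploiting the linear singular-value gaps (\ref{def1}) following Bochi-Potrie-S. \cite{BPS}. For $g\in\sf G_\K$ with $\sroot(\sigma(g))>0$ for every $\sroot\in\Theta$, fix a Cartan decomposition $g=k_1(g)\exp(\sigma(g))k_2(g)$ and set $U^\Theta(g):=k_1(g)\sf P_\Theta\in\sf G_\K/\sf P_\Theta$. The ambiguity of $k_1(g)$ lies in the stabilizer in $\sf K$ of $\exp(\sigma(g))$; this stabilizer fixes the coset $k_1(g)\sf P_\Theta$ precisely because all $\Theta$-gaps of $\sigma(g)$ are positive, so $U^\Theta(g)$ is well defined, and by (\ref{def1}) $U^\Theta(\rho(\g))$ makes sense once $|\g|$ is sufficiently large. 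The symmetric construction using $k_2(g)^{-1}$ defines a companion candidate in $\sf G_\K/\wk{\sf P}_\Theta$.

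Fix $x\in\bord\G$ and a geodesic ray $(\g_n)_{n\geq 0}$ in a Cayley graph of $\G$ converging to $x$. Writing $\g_{n+k}=\g_n\eta_{n,k}$ with $|\eta_{n,k}|=k$, subadditivity of the Cartan projection gives $\|\sigma(\rho(\g_{n+k}))-\sigma(\rho(\g_n))\|\leq Ck$, while (\ref{def1}) forces $\sroot(\sigma(\rho(\g_n)))\geq\mu n-c$ for every $\sroot\in\Theta$. The central analytic input of \cite{BPS,KLP-Morse} is a quantitative KAK-stability estimate
$$
d\bigl(U^\Theta(gh),U^\Theta(g)\bigr)\leq C_1\exp\Bigl(\log\|h\|-\min_{\sroot\in\Theta}\sroot(\sigma(g))\Bigr),
$$
valid when the right-hand exponent is negative. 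Applied to $g=\rho(\g_n)$, $h=\rho(\eta_{n,k})$, this makes $\bigl(U^\Theta(\rho(\g_n))\bigr)_n$ exponentially Cauchy, and we define $\xi^\Theta_\rho(x)$ as its limit. Two rays converging to $x$ fellow-travel at bounded distance, so the same estimate makes the limit ray-independent; since rays to nearby boundary points share long initial segments, this yields H\"older continuity of $\xi^\Theta_\rho$. Equivariance is automatic, since $(\g\g_n)$ converges to $\g x$ and left multiplication by $\rho(\g)$ on $\sf G_\K/\sf P_\Theta$ commutes with limits.

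For transversality, the companion construction produces a map $\xi^{\wk\Theta}_\rho:\bord\G\to\sf G_\K/\wk{\sf P}_\Theta$. Given $x\neq y$, join them by a biinfinite geodesic $(\g_n)_{n\in\Z}$ and observe that the pair $\bigl(k_1(\rho(\g_n))\sf P_\Theta,\,k_2(\rho(\g_n))^{-1}\wk{\sf P}_\Theta\bigr)$ lies in the open Bruhat cell for every $n$; a quantitative version of the stability estimate shows that this open-cell position persists in the limit, giving transversality of $\xi^\Theta_\rho(x)$ and $\xi^{\wk\Theta}_\rho(y)$. For loxodromic $\g\in\G$, the ray $(\g^n)_{n\geq 0}$ converges to $\g^+$, so $\xi^\Theta_\rho(\g^+)=\lim_n U^\Theta(\rho(\g)^n)$; self-opposition of $\Theta$ together with the singular-value gap forces $\rho(\g)$ to be $\Theta$-proximal, and the limit above is its unique attracting fixed flag. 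This establishes the dynamics-preserving property, and uniqueness is then immediate: any continuous equivariant transverse map must send $\g^+$ to the attracting fixed flag of $\rho(\g)$, and loxodromic fixed pairs $(\g^+,\g^-)$ are dense in the space of distinct pairs in $\bord\G$.

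The main obstacle is the KAK-stability estimate above. Heuristically $U^\Theta(g)$ is the attracting $\Theta$-flag of the operator $g$, and one must show that this flag varies Lipschitz-continuously in $g$ with a modulus decaying exponentially in the $\Theta$-gaps. The standard approach interprets $U^\Theta(g)$ as the unique fixed point in the image of a shrinking ball around $\sf P_\Theta$ under the action of $g$ on $\sf G_\K/\sf P_\Theta$, and quantifies the contraction rate in Bruhat coordinates in terms of $\min_{\sroot\in\Theta}\sroot(\sigma(g))$; this local contraction analysis, combined with the verification that it composes well along Anosov sequences, is the technical heart of both \cite{BPS} and \cite{KLP-Morse}. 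Once the lemma is in hand, the remainder of the proof is a routine exercise in Gromov-hyperbolic geometry.
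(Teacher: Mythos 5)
First, a remark on scope: the paper offers no proof of this statement — it is quoted from \cite{KLP-Morse} (with \cite{BPS} for the characterization of Anosov used here) — so your proposal can only be measured against the arguments in those references. Your overall architecture is indeed the Bochi--Potrie--S./Kapovich--Leeb--Porti route and most of it is sound: the Cartan attractor $U^\Theta(g)=k_1(g)\sf P_\Theta$ is well defined because the ambiguity of $k_1$ lies in $\sf K$ intersected with a Levi contained in $\sf P_\Theta$ once all $\Theta$-gaps of $\sigma(g)$ are positive; the telescoping Cauchy estimate along a ray, ray-independence, H\"older continuity, $\Theta$-proximality of $\rho(\g)$ from $\sroot\big(\lambda(\rho(\g))\big)=\lim_n\sroot\big(\sigma(\rho(\g^n))\big)/n\geq\mu>0$, and uniqueness via density of attracting fixed pairs are all as in the literature. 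Two small caveats: the stability estimate should involve $\log\|h\|+\log\|h^{-1}\|$ rather than $\log\|h\|$, and equivariance is not ``automatic'' — left multiplication commuting with limits identifies $\lim_n\rho(\g)U^\Theta(\rho(\g_n))$, but you still need the companion estimate $d\big(U^\Theta(hg),hU^\Theta(g)\big)\to0$ (for fixed $h$, as the gaps of $g$ diverge) to identify this with $\lim_n U^\Theta\big(\rho(\g\g_n)\big)$.

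The genuine gap is in the transversality step. Your starting claim — that $\big(k_1(g)\sf P_\Theta,\,k_2(g)^{-1}\wk{\sf P}_\Theta\big)$ lies in the open Bruhat cell for every single $g$ with positive $\Theta$-gaps — is false. Already in $\SL_2(\R)$ take $g=\bsm 2&0\\0&1/2\esm\bsm 0&-1\\1&0\esm$, so $k_1=\Id$ and $k_2$ is the rotation: then $U(g)=\langle e_1\rangle$ while $k_2^{-1}\langle e_2\rangle=\langle e_1\rangle$, so the attracting and repelling lines extracted from the KAK decomposition of a single element coincide. Moreover, even where finite-level transversality holds, ``open-cell position persists in the limit'' is precisely what fails for an open condition: a limit of transverse pairs need not be transverse. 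What is actually required — and what constitutes the technical heart of \cite{BPS} (transversality of the two bundles of a dominated splitting, obtained from a uniform lower bound on the angle between $E^{cu}$ and $E^{cs}$ along the bi-infinite dominated sequence) and of \cite{KLP-Morse} (the higher-rank Morse lemma) — is a quantitative lower bound, uniform in the pair $(x,y)$ and in the position along the geodesic, on the distance between $U^\Theta(\rho(\g_n))$ and the repelling flag built from the negative ray. Your sketch supplies the contraction estimate needed for convergence and continuity, but neither supplies nor correctly sets up this separate uniform-angle input; as written, the transversality step would fail.
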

If $\sf G=\PGL_d$ and $\Theta=\{\sroot_r\}$, then $\sf G_\K/\sf P_\Theta=\Gr_r(\K^d)$ and we write $\xi^r_\rho=\xi_\rho^{\{\sroot_r\}}.$ 

It was proven in \cite{GW-Domains} that it is possible to reduce the study of general $\{\sroot\}$-Anosov representations to projective Anosov representations.  Indeed one can use the following result by Tits, since for the representations $\Wedge_\sroot$ below one has $$\sroot_1\Big(\sigma\big(\Wedge_\sroot(\rho(\g))\big)\Big)=\sroot\Big(\sigma\big(\rho(\g)\big)\Big).$$

 \begin{prop}[Tits {\cite{Tits}}]\label{TitsReps}
For every $\sroot\in\Delta$ there exists an irreducible proximal representation $\Wedge_\sroot:\sf G_\R\to \PGL_d(\R)$ whose highest restricted weight is $l\omega_\sroot$ for some $l\in\N$.
\end{prop}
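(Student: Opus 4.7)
The plan is to build $\Wedge_\sroot$ from a suitable irreducible complex highest-weight representation of $\sf G_\C$ and descend it to $\sf G_\R$, verifying along the way both the condition on the restricted highest weight and proximality. As setup, I would fix a $\theta$-stable Cartan subalgebra $\frak h\subset \frak g_\C$ containing the maximally split abelian subspace whose real form is $\sf E$, together with compatible positive systems so that the restriction map $p:\frak h^*\to \sf E^*$ sends positive complex roots to either positive restricted roots or to zero. Each complex simple root $\widetilde{\sroot}\in\widetilde{\Delta}$ then either vanishes on $\sf E$ (a ``compact'' simple root) or restricts to a simple restricted root in $\Delta$. For the given $\sroot\in\Delta$ I set $I_\sroot := \{\widetilde{\sroot}\in \widetilde{\Delta}: p(\widetilde{\sroot}) = \sroot\}$, a set stable under the Galois involution on $\widetilde{\Delta}$ coming from the real structure of $\sf G_\R$.

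Next, take $\widetilde\lambda := \sum_{\widetilde{\sroot}\in I_\sroot} \omega_{\widetilde{\sroot}}$, a dominant integral Galois-invariant weight, and let $V_{\widetilde\lambda}$ be the corresponding irreducible complex representation of $\sf G_\C$ provided by Cartan--Weyl theory. Galois invariance forces $V_{\widetilde\lambda}$ to be of real or quaternionic type, and in the quaternionic case I would replace $\widetilde\lambda$ by $2\widetilde\lambda$ to guarantee a real structure. Realifying then yields an irreducible homomorphism $\Wedge_\sroot:\sf G_\R\to \PGL_d(\R)$ with $d=\dim_\R V$. A direct computation using the definition of the fundamental restricted weights shows $p(\widetilde\lambda) = c\,\omega_\sroot$ for some $c\in\Q_{>0}$; replacing $\widetilde\lambda$ by a further integer multiple makes the restricted highest weight equal to $l\omega_\sroot$ for some $l\in\N$.

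Finally, for proximality, every complex weight of $V_{\widetilde\lambda}$ has the form $\mu = \widetilde\lambda - \sum_{\widetilde\sroot\in \widetilde\Delta} n_{\widetilde{\sroot}}\widetilde{\sroot}$ with $n_{\widetilde\sroot}\in\N$; restricting to $\sf E$, the equation $p(\mu) = l\omega_\sroot$ combined with the linear independence of the fundamental restricted weights forces $n_{\widetilde{\sroot}}=0$ whenever $p(\widetilde\sroot)\neq 0$, so $\mu - \widetilde\lambda$ must lie in the span of the compact simple roots. I expect the main obstacle to be precisely ruling out these residual contributions from compact roots, so as to conclude that $\mu = \widetilde\lambda$ is the only weight restricting to $l\omega_\sroot$ and hence that the corresponding restricted weight space is one-dimensional. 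This should use the specific choice of $\widetilde\lambda$ as a full Galois-orbit sum together with the highest-weight structure of $V_{\widetilde\lambda}$; a secondary technicality is the Brauer (real-versus-quaternionic) obstruction for descent, which is handled uniformly by doubling $\widetilde\lambda$ whenever necessary.
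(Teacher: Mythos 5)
The paper offers no proof of this proposition --- it is quoted directly from Tits --- so the relevant comparison is with the classical argument, and your sketch is essentially that argument. The skeleton is correct, and the step you flag as the ``main obstacle'' does close, by a standard Levi-subalgebra computation. Write $\widetilde\Delta_0\subset\widetilde\Delta$ for the compact simple roots (those restricting to $0$ on $\sf E$). Since $I_\sroot\cap\widetilde\Delta_0=\emptyset$, your weight $\widetilde\lambda=\sum_{\widetilde\sroot\in I_\sroot}\omega_{\widetilde\sroot}$ satisfies $\langle\widetilde\lambda,\widetilde\beta^\vee\rangle=0$ for every $\widetilde\beta\in\widetilde\Delta_0$. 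The weights $\mu$ of $V_{\widetilde\lambda}$ with $\widetilde\lambda-\mu\in\N\widetilde\Delta_0$ are precisely the weights of the submodule generated by the highest weight line under the Levi subalgebra attached to $\widetilde\Delta_0$; that submodule is irreducible with highest weight the restriction of $\widetilde\lambda$, hence one-dimensional by the orthogonality just noted. So $\mu=\widetilde\lambda$ is the only weight of $V_{\widetilde\lambda}$ restricting to the top restricted weight, and the corresponding restricted weight space is a line. Note that what drives proximality is this orthogonality to the compact coroots, not the fact that $I_\sroot$ is a full Galois orbit; the latter is what you need (and correctly invoke) for the descent to $\R$.

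Three smaller points. First, in the reduction to $p(\mu)=p(\widetilde\lambda)$ the linear independence you should invoke is that of the simple restricted roots $\Delta$ in $\sf E^*$ (applied to $p\bigl(\sum n_{\widetilde\sroot}\widetilde\sroot\bigr)=0$), not that of the fundamental restricted weights. Second, ``realifying'' must mean passing to a real form $V_\R$ with $V_\R\otimes_\R\C\cong V$, so that $d=\dim_\C V$; restriction of scalars from $\C$ to $\R$ would double the top restricted weight space and destroy proximality, which is exactly why your real-versus-quaternionic discussion and the doubling of $\widetilde\lambda$ are needed. Irreducibility of the resulting $\Wedge_\sroot$ over $\R$ then follows because $\sf G_\R$ is Zariski dense in $\sf G_\C$, so any invariant real subspace of $V_\R$ complexifies to a $\sf G_\C$-invariant subspace of $V$. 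Third, the claim $p(\widetilde\lambda)=c\,\omega_\sroot$ with $c\in\Q_{>0}$ is indeed a standard computation, but it deserves a line of care when the restricted root system is non-reduced (type $BC$); with that checked, your sketch is a complete proof along the classical lines.
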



Recall that the Jordan decomposition states that every $g\in\sf G_\K$ can be written uniquely as a commuting product $g=g_eg_hg_n,$ where $g_e$ is elliptic, $g_h$ is $\R$-split  and $g_n$ is unipotent. The \emph{Jordan-Lyapunov} projection $\lambda:\sf G_\K\to\sf E^+$ is defined by the logarithm of the  eigenvalues of $g_h$ with multiplicities and in decreasing order. If $\sf G=\PGL_d$, this corresponds to the logarithm of the modulus of the roots of the characteristic polynomial of $g$ with multiplicities and in decreasing order, and we denote by $$\lambda(g)=\big(\lambda_1(g),\ldots,\lambda_d(g)\big)\in\big\{(x_1,\ldots,x_d)\in \R^d|\; x_1\geq \ldots \geq x_d, \sum x_i=0\big\}$$ its coordinates.

We will denote by $\Lambda_\rho\subset \sf E^+$ the \emph{limit cone} of the subgroup $\rho(\G)<\sf G_\K$. This is the cone given by
$$\Lambda_\rho:=\overline{\{\R_+\cdot\lambda(\rho(\g))|\;\g\in\G\}}.$$
It was proven by Benoist \cite{limite} that, provided $\rho(\G)$ is Zariski dense,   $\Lambda_\rho$ is convex and has non-empty interior. 
 
For every functional $\varphi\in\sf E^*$ that is positive on the limit cone $\Lambda_\rho$ we denote by $h^\varphi(\rho)$ the \emph{critical exponent} of the Dirichlet series
$$s\mapsto\sum_{\g\in\G}e^{-s\varphi\Big(\sigma\big(\rho(\g)\big)\Big)},$$ it can be computed as $$h^\varphi(\rho)=\inf\Big\{s:\sum_{\g\in\G}e^{-s\varphi\big(\sigma\big(\rho(\g)\big)\big)}<\infty\Big\}=\sup\Big\{s:\sum_{\g\in\G}e^{-s\varphi\big(\sigma\big(\rho(\g)\big)\big)}=\infty\Big\}.$$

\subsection{Hyperconvex representations}\label{s.112}

We begin with the following definition from \cite{PSW1}:
\begin{defi*} A $\{\sroot_1,\sroot_2\}$-Anosov representation $\rho:\G\to\PGL_d(\K)$ is called $(1,1,2)$-\emph{hyperconvex} if for every triple of pairwise distinct points $x,y,z\in\bord\G$ one has $$\big(\xi^1_\rho(x)\oplus \xi^1_\rho(y)\big)\cap \xi^{d-2}_\rho(z)=\{0\}.$$ \end{defi*}

The following is a direct consequence of the uniqueness of boundary maps:

\begin{lemma}\label{FieldExt} The complexification of a real hyperconvex representation is hyperconvex (over $\C$).
\end{lemma}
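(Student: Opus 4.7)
The plan is to reduce the lemma to three observations: preservation of the Anosov condition under complexification, uniqueness of boundary maps for the complexified representation, and an elementary linear-algebra fact about complexified subspaces.

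First I would check that if $\rho:\G\to\PGL_d(\R)$ is $\{\sroot_1,\sroot_2\}$-Anosov, then so is its complexification $\rho^\C:\G\to\PGL_d(\C)$. For a real matrix $g$ viewed inside $\PGL_d(\C)$, the Cartan projection is computed from the singular values of $g^*g=g^tg$ and coincides with the real Cartan projection, so the quantitative inequality in Definition~\ref{defAnosov} is preserved verbatim. The Kapovich-Leeb-Porti theorem then supplies the unique equivariant, continuous, transverse, dynamics-preserving boundary maps $\xi^1_{\rho^\C}:\bord\G\to\P(\C^d)$ and $\xi^{d-2}_{\rho^\C}:\bord\G\to\Gr_{d-2}(\C^d)$.

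Next I would identify these complex boundary maps with the complexifications of the real ones. Post-composing $\xi^1_\rho$ and $\xi^{d-2}_\rho$ with the natural inclusions $\P(\R^d)\hookrightarrow\P(\C^d)$ and $\Gr_{d-2}(\R^d)\hookrightarrow\Gr_{d-2}(\C^d)$ that send a real subspace to its complexification, one obtains maps that are continuous, $\rho^\C$-equivariant, transverse and dynamics-preserving. Uniqueness forces them to equal $\xi^1_{\rho^\C}$ and $\xi^{d-2}_{\rho^\C}$, so the complex boundary maps take values in the complexifications of the real boundary subspaces.

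Finally, the $(1,1,2)$-hyperconvexity condition reduces to the elementary observation that if $U,V,W\subset\R^d$ are real subspaces with $(U+V)\cap W=\{0\}$, then $(U_\C+V_\C)\cap W_\C=\{0\}$ in $\C^d$: indeed a vector in the intersection decomposes into real and imaginary parts, each of which lies individually in $(U+V)\cap W=\{0\}$. Applied to $U=\xi^1_\rho(x)$, $V=\xi^1_\rho(y)$ and $W=\xi^{d-2}_\rho(z)$ for pairwise distinct $x,y,z\in\bord\G$, this yields hyperconvexity of $\rho^\C$ over $\C$. There is no serious obstacle here: the real content is simply that transversality of real subspaces passes to complexification, with the Anosov machinery supplying the required identification of boundary maps.
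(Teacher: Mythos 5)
Your proof is correct and follows exactly the route the paper intends: the lemma is stated there as ``a direct consequence of the uniqueness of boundary maps,'' and your three steps (the Cartan projection of a real matrix is unchanged in $\PGL_d(\C)$, uniqueness identifies $\xi^k_{\rho^\C}$ with the complexification of $\xi^k_\rho$, and transversality of real subspaces passes to their complexifications via real and imaginary parts) are precisely the details being elided. Nothing further is needed.
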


An important property of (1,1,2)-hyperconvex representations, established in \cite{PSW1} is that, for these representations, the Hausdorff dimension of the limit curve for a Riemannian metric on $\P(\K^d)$ computes the critical exponent for the first simple root. If $\rho$ is $\{\sroot_1\}$-Anosov, the root $\sroot_1$ is positive on the limit cone (recall Equation \eqref{def1}) and thus its critical exponent is well defined. We then have the following.

\begin{thm}[{P.-S.-W. \cite{PSW1}}]\label{t.LC} Let $\rho:\G\to\PGL_d(\K)$ be $(1,1,2)$-hyperconvex, then $$\dim_{\Hff}\big(\xi^1(\bord\G)\big)=h^{\sroot_1}_\rho.$$ 
\end{thm}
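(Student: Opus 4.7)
The plan is to set up a two-sided comparison between the projective distance on $\xi^1(\bord\G)$ and a dynamically defined Gromov-product distance of the form $e^{-\sroot_1(\sigma(\rho(\g_{x,y})))}$, where $\g_{x,y}\in\G$ realizes the Gromov product of $x,y\in\bord\G$, and to combine this with a Patterson--Sullivan construction for the Poincar\'e series $\sum_\g e^{-s\sroot_1(\sigma(\rho(\g)))}$, whose critical exponent is $h^{\sroot_1}_\rho$ by definition.

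For the upper bound, only the $\{\sroot_1\}$-Anosov property is needed. A standard shadow argument leverages the gap between the first and second singular values of $\rho(\g)$: the image under $\rho(\g)$ of a fixed open set in $\P(\K^d)$ transverse to $\xi^{d-1}(\g^{-1}\cdot)$ has projective diameter comparable to $e^{-\sroot_1(\sigma(\rho(\g)))}$. These shadows, parametrised by group elements of a given dynamical size, cover $\xi^1(\bord\G)$ with uniformly bounded multiplicity at each scale, so that summation yields convergence of $s$-dimensional Hausdorff measure for every $s>h^{\sroot_1}_\rho$. Hence $\dim_{\Hff}\xi^1(\bord\G)\le h^{\sroot_1}_\rho$.

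The lower bound is the substantive part and uses the $(1,1,2)$-hyperconvexity crucially. The idea is to push forward a Patterson--Sullivan measure $\mu$ of exponent $h^{\sroot_1}_\rho$ from $\bord\G$ to $\xi^1(\bord\G)$ through $\xi^1$, and apply the mass distribution principle. This requires a matching \emph{lower} bound
$$d_\P\bigl(\xi^1(x),\xi^1(y)\bigr)\;\gtrsim\; e^{-\sroot_1(\sigma(\rho(\g_{x,y})))},$$
which amounts to showing that $\xi^1$ is a $C^1$ curve with tangent line $\xi^2(x)/\xi^1(x)$ at each $\xi^1(x)$, with uniform modulus of continuity of the tangent. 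Here $(1,1,2)$-hyperconvexity enters decisively: the hypothesis that $\xi^1(x)\oplus\xi^1(y)$ avoids $\xi^{d-2}(z)$ uniformly in distinct triples is exactly what prevents the limit curve from being self-tangent, forcing secants $\xi^1(x)\oplus\xi^1(y)$ to converge to the prescribed tangent direction, and additionally forcing a uniform angle with the stable direction at a third reference point. Combined with the $\{\sroot_1,\sroot_2\}$-Anosov splitting that separates the first two singular value scales from the rest, this yields the desired bi-Lipschitz comparison.

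The main obstacle is precisely making the hyperconvexity quantitative: converting the open condition $\bigl(\xi^1(x)\oplus\xi^1(y)\bigr)\cap\xi^{d-2}(z)=\{0\}$ into a uniform angle estimate over compact triples, and then into the estimate above by tracking how $\rho(\g)^{-1}$ contracts projective directions according to the $\sroot_1$- and $\sroot_2$-singular values. Once this bi-Lipschitz control is secured, the equality $\dim_{\Hff}\xi^1(\bord\G)=h^{\sroot_1}_\rho$ follows from the standard identification of Hausdorff dimension with the exponent of a quasi-conformal Patterson--Sullivan measure supported on the limit set.
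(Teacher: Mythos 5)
Theorem~\ref{t.LC} is quoted here from \cite{PSW1} without proof, so the comparison can only be with the argument of that reference; your outline reproduces its architecture essentially correctly. The inequality $\dim_{\Hff}\big(\xi^1(\bord\G)\big)\le h^{\sroot_1}_\rho$ does indeed follow from the $\{\sroot_1\}$-Anosov property alone, by covering the limit set at each scale with shadows of diameter $\lesssim e^{-\sroot_1(\sigma(\rho(\g)))}$ and using convergence of the Dirichlet series above the critical exponent (bounded multiplicity is not needed for this direction, only the existence of the cover); and the reverse inequality is exactly where $(1,1,2)$-hyperconvexity enters, via a compactness argument turning the open transversality condition on triples into a uniform angle bound, which yields the matching lower bound $d_\P\big(\xi^1(x),\xi^1(y)\big)\gtrsim e^{-\sroot_1(\sigma(\rho(\g_{x,y})))}$ and hence the two-sided comparison between balls and shadows; \cite{PSW1} then concludes, as you do, with a quasi-conformal measure of exponent $h^{\sroot_1}_\rho$ for the functional $\sroot_1$, a shadow lemma, and the mass distribution principle. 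One correction of framing: reducing the lower bound to ``$\xi^1$ is a $C^1$ curve with tangent $\xi^2(x)/\xi^1(x)$'' only parses when $\bord\G$ is a circle (the setting of Theorem~\ref{h=1}), whereas Theorem~\ref{t.LC} concerns arbitrary word hyperbolic $\G$, whose boundary may be a Cantor set; the statement that actually carries the proof is the one you also give, namely that secant lines $\xi^1(x)\oplus\xi^1(y)$ converge uniformly into a cone about $\xi^2(x)$ and stay uniformly transverse to $\xi^{d-2}(z)$, so that $\rho(\g)^{-1}$ separates pairs of limit points at the rate $e^{-\sroot_1}$ rather than at the faster rates governed by the lower singular values. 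With that caveat, and granting that the quantification of hyperconvexity and the shadow lemma are the substantive work you correctly defer, the proposal matches the proof in \cite{PSW1}.
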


A second important property of $(1,1,2)$-hyperconvex representations into $\PSL_d(\R)$ was established in P.-S.-W. \cite{PSW1} (and independently in Zimmer-Zhang \cite{ZZ}) is the following: if $\G$ is such that $\bord\G$ is homeomorphic to a circle,  then the image of the boundary map $\xi^1_\rho$ is a $\textrm{C}^1$-curve. As a result we get

\begin{thm}[{P.-S.-W. \cite{PSW1}}]\label{h=1}
Let $\rho:\G\to\PSL_d(\R)$ be $(1,1,2)$-hyperconvex. If $\bord\G$ is homeomorphic to a circle then $\dim_{\Hff}\big(\xi^1(\bord\G)\big)=1.$
\end{thm}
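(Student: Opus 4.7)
The plan is to derive the theorem as a direct consequence of the $C^1$-regularity of the limit curve stated in the paragraph immediately preceding the statement: once that regularity is granted, the Hausdorff dimension computation is a soft geometric measure theory argument.

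First, I would observe that since $\rho$ is in particular $\{\sroot_1\}$-Anosov, the boundary map $\xi^1_\rho:\bord\G\to\P(\R^d)$ is continuous and transverse, hence injective. Because $\bord\G$ is homeomorphic to $S^1$, the image $C:=\xi^1_\rho(\bord\G)$ is therefore a topological circle embedded in $\P(\R^d)$.

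Invoking next the $C^1$-regularity result of P.-S.-W. \cite{PSW1} (and independently of Zimmer-Zhang \cite{ZZ}) quoted in the preceding paragraph, $C$ is a $C^1$-embedded submanifold of $\P(\R^d)$, and so a Lipschitz image of the compact space $\bord\G\cong S^1$ (any $C^1$ map on a compact domain is Lipschitz). Since Lipschitz maps do not increase Hausdorff dimension, this already gives $\dim_\Hff(C)\le \dim_\Hff(S^1)=1$. The reverse inequality $\dim_\Hff(C)\ge 1$ is automatic: $C$ is a non-degenerate continuum (it has positive diameter since $\xi^1_\rho$ is injective and non-constant), and projecting to an appropriate affine chart by a $1$-Lipschitz map produces a connected subset of $\R$ of positive diameter, so $\mathcal{H}^1(C)>0$.

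The main obstacle is entirely hidden in the $C^1$-regularity statement used as input, which relies crucially on the $(1,1,2)$-hyperconvexity hypothesis to upgrade transversality of the flag boundary maps to first-order tangency information on the projective limit curve; once that regularity is granted, the Hausdorff dimension computation is formal and there is essentially nothing further to do.
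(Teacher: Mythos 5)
Your proposal is correct and follows exactly the route the paper takes: Theorem \ref{h=1} is stated there as an immediate consequence (``As a result we get'') of the $C^1$-regularity of the limit curve from \cite{PSW1,ZZ}, and your filling-in of the geometric measure theory (upper bound from a local Lipschitz/bi-Lipschitz parametrization of a compact $C^1$-submanifold, lower bound from $C$ being a nondegenerate continuum) is the intended argument. The only wording to tighten is the phrase ``Lipschitz image of $\bord\G$'': since $\bord\G$ carries no smooth structure a priori, the Lipschitz parametrization comes from covering the $C^1$-embedded circle $C$ by finitely many $C^1$ charts over intervals, not from $\xi^1_\rho$ itself.
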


Thus, for fundamental groups of surfaces, the Hausdorff dimension is constant and minimal on the real (1,1,2)-hyperconvex locus.

\subsection{Higher rank Teichm\"uller Theory}\label{s.hrtt}
A higher Teichm\"uller space is a union of connected components of a character variety $\frak X(\pi_1S,\sf G_\R)$ only consisting of Anosov representations. 

Historically, the first family of higher Teichm\"uller spaces are Hitchin components. They arise whenever $\sf G_\R$ is a center free real-split simple Lie group. In this case there is a unique \emph{principal} subalgebra $\frak{sl}_2(\R)<\frak g_\R$ characterized by the property that the centralizer of $\left(\begin{smallmatrix}0&1\\0&0\end{smallmatrix}\right)$ has minimal dimension (Kostant \cite{kostant}). The \emph{Hitchin component} $\hitchin(S,\sf G_\R)\subset \frak X(\pi_1S,\sf G_\R)$ is the connected component containing \emph{Fuchsian representations}: the composition of the holonomy of a hyperbolization $\pi_1S\to\PSL_2(\R)$ and the morphism $\PSL_2(\R)\to \sf G_\R$ induced by the inclusion of the principal subalgebra. Representations in the Hitchin component are Anosov with respect to the minimal parabolic subgroup \cite{Labourie-Anosov}.\footnote{In fact, Labourie proved this for $\sf G_\R = \PSL_d(\R)$, which implies the result also for symplectic groups and odd-dimensional orthogonal groups. Fock and Goncharov gave a  characterization of representations in the Hitchin component as positive representations, from which the Anosov property can be deduced with a little work.}
Furthermore, representations in the Hitchin component are hyperconvex:
\begin{thm}[{\cite{Labourie-Anosov,PSW1,S}}]\label{h->h}Let $\sf G_\R$ be  a simple split center-free real group. For every $\rho\in\hitchin(S,\sf G_\R)$ and $\sroot\in\Pi$ the representation $\Wedge_\sroot\rho:\pi_1S\to\PSL(V,\R)$ is $(1,1,2)$-hyperconvex.
\end{thm}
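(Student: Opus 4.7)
The plan is to verify, for each $\sroot\in\Pi$, the two defining conditions of $(1,1,2)$-hyperconvexity for the composition $\Wedge_\sroot\rho:\pi_1S\to\PSL(V,\R)$: first, that it is $\{\sroot_1,\sroot_2\}$-Anosov, and second, that for every triple of pairwise distinct points $x,y,z\in\bord\pi_1S$ one has the transversality $(\xi^1(x)\oplus\xi^1(y))\cap\xi^{\dim V-2}(z)=\{0\}$, where $\xi^k$ are the boundary flags of $\Wedge_\sroot\rho$.

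For the Anosov property I would start from Labourie's theorem, which asserts that $\rho\in\hitchin(S,\sf G_\R)$ is $\Pi$-Anosov, so that $\beta(\sigma(\rho(\g)))\geq\mu|\g|-c$ for every $\beta\in\Pi$. Since $\Wedge_\sroot$ has highest weight $l\omega_\sroot$ by Proposition~\ref{TitsReps}, one obtains $\sroot_1(\sigma(\Wedge_\sroot\rho(\g)))=\sroot(\sigma(\rho(\g)))$, giving the first linear gap. Because $\omega_\sroot$ pairs trivially with every simple coroot other than $\sroot^\vee$, the subdominant weight of $V$ is the single weight $l\omega_\sroot-\sroot$, and the remaining weights are obtained by subtracting further positive roots of $\sf G_\R$. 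Hence the gap between the second and third singular values of $\Wedge_\sroot\rho(\g)$ is bounded below by $\min_{\beta\in\Pi}\beta(\sigma(\rho(\g)))$, which is linear in $|\g|$ by Labourie's theorem; this yields the second Anosov inequality.

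For the transversality condition I would rely on the Frenet property of Hitchin representations. When $\sf G_\R=\PSL_d(\R)$, Labourie proved that the boundary flag curve of $\rho$ is \emph{Frenet}: for any pairwise distinct $x_1,\dots,x_k\in\bord\pi_1S$ and positive integers $n_i$ with $\sum n_i\leq d$, the subspaces $\xi^{n_i}_\rho(x_i)$ are in direct sum. Specialising to $\sroot=\sroot_r$ so that $\Wedge_\sroot$ is essentially $\Wedge^r$ of the standard representation, I would express each flag $\xi^k_{\Wedge^r\rho}$ in terms of the Frenet flag of $\rho$ via the wedge-product filtration and deduce by a multilinear-algebra computation that $\xi^1_{\Wedge^r\rho}(x)+\xi^1_{\Wedge^r\rho}(y)+\xi^{\dim V-2}_{\Wedge^r\rho}(z)=V$ for every triple of distinct $x,y,z$. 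For a general simple split $\sf G_\R$ I would either post-compose with a suitable equivariant embedding of the relevant flag manifolds into a Grassmannian of $V$ to reduce the question to the $\PSL_d$-case, or invoke the extension of Frenet-type positivity to Tits representations of Hitchin representations established in \cite{S}.

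The main obstacle, in my view, is precisely the general $\sf G_\R$ case of the transversality: one has to translate the intrinsic Frenet property of $\rho$, which lives in the flag manifold of $\sf G_\R$, into a concrete statement about triple transversality of the weight-space filtrations inside $V$ cut out by $\Wedge_\sroot$. This is a finite linear-algebraic assertion, but verifying that the correct combinations of weight subspaces really correspond to $\xi^1_{\Wedge_\sroot\rho}$ and $\xi^{\dim V-2}_{\Wedge_\sroot\rho}$ is where representation-theoretic care is needed, and is the substance of \cite{S,PSW1}.
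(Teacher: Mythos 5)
Your proposal follows essentially the same route as the paper, which in fact offers no independent argument and simply cites Labourie \cite{Labourie-Anosov} for the classical split groups, \cite[Theorem 9.9]{PSW1} for $\SO(n,n)$, and \cite[Remark 5.14]{S} for the general case. Your derivation of the $\{\sroot_1,\sroot_2\}$-Anosov property from the weight structure of $\Wedge_\sroot$ (highest weight $l\omega_\sroot$, unique subdominant weight $l\omega_\sroot-\sroot$ of multiplicity one) and your reduction of the transversality to the Frenet/positivity property are precisely the ingredients of those references, so the proposal is correct in approach, with the remaining multilinear-algebra step accurately identified and legitimately deferred to \cite{PSW1,S}.
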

\begin{proof}
This was established, for the groups $\sf G_\R=\PSL_d(\R),$ $\PSp(2n,\R),$ $\PSO(n,n+1)$ or the split form of the exceptional complex Lie group $\sf G_2,$ by Labourie \cite{Labourie-Anosov}, for $\sf G=\SO(n,n)$ by P.-S.-W. \cite[Theorem 9.9]{PSW1}. The general case follows from S. \cite[Remark 5.14]{S}.
\end{proof}

The second family of higher Teichm\"uller spaces are spaces of  maximal representations in Hermitian Lie groups $\sf G_\R$ \cite{MaxReps}. Our results here do not apply in this setting. Maximal representations are, in general, only Anosov with respect to one root $\sroot$, which therefore doesn't belong to the Levi-Anosov subspace. Even though we know that for maximal representations the critical exponent $h^\sroot_\rho$ is constant and equal to one (\cite[Theorem 1.2]{PSW2}), it is not clear if a spectral gap pressure metric $\PP^\sroot$ can be constructed in this case. Moreover, since maximal representations are, in general, not (1,1,2)-hyperconvex, it is not known if, for complex deformations in $\rho:\G\to\sf G_\C$, the critical exponent $h^\sroot_\rho$ equals the Hausdorff dimension of the limit set.

Conjecturally there are two further families of higher Teichm\"uller spaces, given by $\Theta$-positive representations as introduced in \cite{GWpos, ICM}. $\Theta$-positive representations exist when $\sf G_\R$ is locally isomorphic to $\SO(p,q)$, $p<q$, or when $\sf G_\R$ belongs to a special family of exceptional Lie groups. In a forthcoming article, Guichard, Labourie and W. \cite{GLWpos} prove that $\Theta$-positive representations are $\Theta$-Anosov; in particular, in the case of $\SO(p,q)$, $p<q$, $\Theta$-positive representations are Anosov with respect to the first $p-1$ roots. Since this article is not yet available, we will here consider $\Theta$-positive Anosov representations, and use the following result from \cite{PSW2}. 

\begin{thm}[{\cite[Theorem 10.1]{PSW2}}]
Let $\rho:\G\to \SO(p,q)$ be a $\Theta$-positive $\Theta$-Anosov representation. For every $\sroot\in\{\sroot_1,\ldots,\sroot_{p-2}\}$ the representation $\Wedge_\sroot\rho:\pi_1S\to\PSL(V,\R)$ is $(1,1,2)$-hyperconvex.
\end{thm}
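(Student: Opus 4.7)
The plan is to establish, for $\sroot\in\{\sroot_1,\ldots,\sroot_{p-2}\}$, both the $\{\sroot_1,\sroot_2\}$-Anosov property and the $(1,1,2)$-transversality condition for $\Wedge_\sroot\rho$. First I would dispose of the Anosov part by weight-theoretic bookkeeping. Since the highest restricted weight of $\Wedge_\sroot$ is $l\omega_\sroot$ by Proposition~\ref{TitsReps}, the projective (i.e.\ $\sroot_1$-)Anosov property of $\Wedge_\sroot\rho$ is exactly the $\sroot$-Anosov property of $\rho$, which is part of the $\Theta$-Anosov hypothesis. For the $\sroot_2$-Anosov inequality I would identify the second highest restricted weight $\nu$ of $\Wedge_\sroot$; its difference $l\omega_\sroot-\nu$ is a positive integer combination of simple roots in $\Theta$, so the required linear lower bound on $\sroot_2\!\bigl(\sigma(\Wedge_\sroot(\rho(\g)))\bigr)$ reduces to the assumed lower bounds for roots in $\Theta$.

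The substantive content is the $(1,1,2)$-transversality. I would first identify the boundary maps of $\Wedge_\sroot\rho$ in terms of those of $\rho$: writing $E^\sroot_\rho(x):=\xi^\sroot_\rho(x)$ for the $\sroot$-dimensional isotropic subspace of $\R^{p+q}$ coming from the $\Theta$-Anosov boundary map of $\rho$, uniqueness of equivariant transverse boundary maps forces $\xi^1_{\Wedge_\sroot\rho}(x)=\Wedge^\sroot E^\sroot_\rho(x)$, while $\xi^{d-2}_{\Wedge_\sroot\rho}(z)$ is the codimension-two subspace of $\Wedge^\sroot\R^{p+q}$ determined by the two highest weight lines of the opposite parabolic attached to $\xi^\Theta_\rho(z)$.

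Unwinding the definition, the $(1,1,2)$-hyperconvex condition then becomes a concrete general-position statement about triples of partial isotropic flags in $\R^{p+q}$: that $E^\sroot_\rho(x)+E^\sroot_\rho(y)$ meets a certain codimension-$(\sroot+1)$ subspace built from $\xi^\Theta_\rho(z)$ only in zero. Here $\Theta$-positivity is the essential input. For $\Theta$-positive representations the boundary map sends cyclically ordered triples in $\bord\G$ to \emph{positive triples} of partial flags in the sense of \cite{GWpos}, and positivity of such a triple can be encoded, in local affine charts on the flag variety, by totally positive matrices. I would exploit this explicit parameterization to verify the required injectivity of the linear map governing the transversality.

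The main obstacle is this last step: translating abstract $\Theta$-positivity into the specific linear-algebra transversality demanded by $(1,1,2)$-hyperconvexity. The restriction $\sroot\leq p-2$ is essential, for two different reasons: for $\sroot=p-1$ the relevant subspaces become maximal isotropic and must intersect nontrivially even in generic configurations, something positivity cannot correct; and the short simple root $\sroot_p$ lies outside $\Theta$ from the outset, so $\Wedge_{\sroot_p}\rho$ need not even be $\sroot_1$-Anosov. The technical heart of the proof should therefore be a direct computation, in the totally-positive parameterization of positive triples of partial flags in $\SO(p,q)$, showing that the map $E^\sroot_\rho(x)\oplus E^\sroot_\rho(y)\to \R^{p+q}/\!\sim_z$ is injective in the stated range of $\sroot$.
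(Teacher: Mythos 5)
The paper does not actually prove this statement --- it is imported verbatim from \cite[Theorem 10.1]{PSW2} --- so the comparison can only be with the argument in that reference, which your outline does follow in broad strokes. The weight-theoretic reduction is essentially right: the $\{\sroot_1,\sroot_2\}$-Anosov property of $\Wedge_{\sroot_k}\rho=\Lambda^k\rho$ reduces to the Anosov conditions for $\sroot_{k-1},\sroot_k,\sroot_{k+1}$, all of which lie in $\Theta=\{\sroot_1,\dots,\sroot_{p-1}\}$ exactly when $k\le p-2$; the boundary maps are identified by uniqueness; and $(1,1,2)$-hyperconvexity of $\Lambda^k\rho$ is equivalent to a transversality statement about triples of partial isotropic flags of $\rho$ (this translation is carried out in \cite{PSW1}). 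One imprecision: for the $\sroot_2$-Anosov condition you must control the gap between the \emph{second and third} restricted weights of $\Lambda^k$, not the gap $l\omega_\sroot-\nu$ between the first and second; the computation still lands on $\min\big(\sroot_{k-1},\sroot_{k+1}\big)$ and goes through.

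The genuine gap is that the step carrying the entire content of the theorem --- deducing the transversality from $\Theta$-positivity of the boundary map --- is exactly the step you defer to ``a direct computation'' without performing it; as written, only the routine bookkeeping is done. Moreover, your diagnosis of why $\sroot_{p-1}$ is excluded is not correct: a $(p-1)$-dimensional isotropic subspace of $\R^{p+q}$ with the $(p,q)$-form, $p<q$, is not maximal isotropic, and generic configurations of such flags are transverse. The actual obstruction is structural: in the $\Theta$-positive structure on $\SO(p,q)$ the root spaces $\frak u_{\sroot_1},\dots,\frak u_{\sroot_{p-2}}$ are one-dimensional, so positivity in those directions is classical total positivity of a unipotent $\GL$-type block --- this is what makes the minor estimates and hence the transversality work --- whereas $\frak u_{\sroot_{p-1}}$ is $(q-p+2)$-dimensional with positivity governed by a proper convex cone; in addition $\sroot_2\big(\sigma(\Lambda^{p-1}g)\big)$ involves $\sroot_p\notin\Theta$. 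A complete proof must engage with this dichotomy explicitly, and that is where the work in \cite{PSW2} actually lies.
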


Note that when $\sf G_\R$ admits a $\Theta$-positive structure, Guichard and W. conjectured several years ago, see also \cite{GWpos, ICM}, that then there exist additional connected components (namely the conjectured components of $\Theta$-positive representations) in the representation variety, which are not distinguished by characteristic numbers. This conjecture has been proven by Collier \cite{Collier} in the case of $\sf G_\R = \SO(n,n+1)$ and by Aparicio-Arroyo, Bradlow, Collier, García-Prada, Gothen, and Oliveira \cite{ABCGGO} in the case of $\sf G_\R = \SO(p,q)$ using methods from the theory of Higgs bundles. 

\subsection{Reparametrizations of geodesic flows}\label{s.rep}
In this section we describe a very useful dynamical viewpoint on Anosov representations from S. \cite{quantitative} and B.-Canary-Labourie-S. \cite{pressure}, which makes them amenable to the thermodynamic formalism: Any Anosov representation gives rise to a repa\-ra\-me\-tri\-za\-tion of the geodesic flow. 

Given a hyperbolic group $\G$ we denote by $\sf U\G$ the Gromov geodesic flow; this is a metric space  endowed with a topologically transitive flow $\phi$ whose periodic orbits correspond to conjugacy classes in $\G$. If $\G$ admits an Anosov representation then $\phi$ is moreover metric Anosov \cite{pressure}. Note that, if $\G$ is the fundamental group of a compact negatively curved manifold $M$, we can choose $\sf U\G=\sf U M$; more generally,  whenever $\G$ admits an Anosov representation, its geodesic flow can be explicitly constructed  with the aid of the associated boundary maps \cite[Theorem 1.10]{pressure}.

If $\alpha>0$, we denote by $\holder_\alpha(\sf U\G,\R)$ the space of $\alpha$-H\"older continuous functions on $\sf U\G$ and by $\holder(\sf U\G,\R)$ is the space of all H\"older continuous functions. If $f \in \holder (\sf U\G,\R)$ and $a \in [\G]$ is a conjugacy class, then we define the $f$-period of $a$ by
$$\ell_f(a) = \int_0^{\ell(a)} f(\phi_t(x))dt$$
where $x \in a$. 
If $f \in \holder_\alpha(\sf U\G,\R_+)$, we obtain a new flow $\phi^f$ on $\sf U\G$ called the {\em reparametrization} of $\phi$ by $f$. The flow $\phi^f$ is given by the formula
\begin{equation}\label{e.rep}\phi_t(x)= \phi^f_{k_f(x,t)}(x) \end{equation}
where $k_f(x,t) = \int_0^t f(\phi_sx)ds$ 
for all $x\in X$ and $t\in\R$. The flow $\phi^f$
is H\"older orbit equivalent to $\phi$ and if $a\in [\G]$, then $\ell_f(a)$ is the period
of $a$ in the flow $\phi^f$. 

In \cite[Section 4]{pressure} B.-Canary-Labourie-S. associate to any projective Anosov representation a reparametrization of the geodesic flow $\sf U\G$. They prove the following statement,  the second part is proved in \cite[Section 6]{pressure}.

\begin{prop}[{\cite{pressure}}]\label{spectralPotential} Let $\rho:\G\to\PGL_d(\R)$ be a projective Anosov representation. Then there exists a positive H\"older-continuous function $f_\rho^{\lambda_1}:\UG\to\R_{>0}$ such that for every conjugacy class $[\g]\in[\G]$ one has $$\ell_\g(f_\rho^{\lambda_1})=\lambda_1\big(\rho(\g)\big).$$ Moreover, if $\{\rho_u\}_{u\in D}$ is an analytic family of such representations, then one can choose $f_{\rho_u}^{\omega_1}$ so that the function $u\mapsto f_{\rho_u}^{\lambda_1}$ is analytic. 
\end{prop}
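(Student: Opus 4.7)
The plan is to construct $f_\rho^{\lambda_1}$ from a Busemann-type Hölder cocycle $\beta_\rho:\G\times\bord\G\to\R$ attached to $\rho$, via the Ledrappier correspondence between Hölder cocycles on $\bord\G$ and Hölder potentials on $\sf U\G$ (see \cite[\S 4]{pressure}). The cocycle measures the logarithmic expansion of $\rho(\g)$ along the line $\xi^1_\rho(x)$, and its period at the attracting fixed point of $\g$ will compute $\lambda_1(\rho(\g))$, which is exactly the period we want the potential on $\sf U\G$ to realise.

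Concretely, I would fix an auxiliary Euclidean norm $\|\cdot\|$ on $\R^d$ and set
$$\beta_\rho(\g,x)=\log\frac{\|\rho(\g)v\|}{\|v\|},\qquad v\in\xi^1_\rho(x)\setminus\{0\},$$
which is independent of the choice of $v$. Hölder regularity in $x$ follows from that of $\xi^1_\rho$; the cocycle relation $\beta_\rho(\g\eta,x)=\beta_\rho(\g,\eta x)+\beta_\rho(\eta,x)$ is immediate from equivariance of the boundary map; and at the attracting fixed point $x_\g\in\bord\G$ of an infinite order element $\g$, a direct computation gives $\beta_\rho(\g,x_\g)=\lambda_1(\rho(\g))$. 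The Ledrappier correspondence then yields a Hölder function $f_\rho:\sf U\G\to\R$, well-defined up to a Hölder coboundary, whose periods on closed orbits coincide with the $\beta_\rho$-periods, so that $\ell_\g(f_\rho)=\lambda_1(\rho(\g))$.

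The main technical point I expect is arranging \emph{positivity} of the representative, rather than only of its periods, since Ledrappier's correspondence gives $f_\rho$ only modulo a Hölder coboundary. Here I would exploit the Anosov lower bound $\sroot_1(\sigma(\rho(\g)))\geq\mu|\g|-c$, combined with the classical comparison between Cartan and Jordan projections along long closed orbits, to conclude that the periods $\lambda_1(\rho(\g))$ grow uniformly linearly with the flow period of $\g$ in $\sf U\G$. A Livsic-type argument for the metric-Anosov flow on $\sf U\G$ then produces a Hölder coboundary whose sum with $f_\rho$ is bounded below by a positive constant, yielding the sought strictly positive $f_\rho^{\lambda_1}$.

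Finally, for an analytic family $\{\rho_u\}_{u\in D}$, the boundary maps $\xi^1_{\rho_u}$ vary analytically on the Anosov locus, so the cocycle $\beta_{\rho_u}$ depends analytically on $u$; using a fixed Markov coding of the flow adapted to the family, both the Ledrappier representative and the Livsic coboundary used to ensure positivity can be chosen to depend analytically on $u$, giving the required analyticity of $u\mapsto f^{\lambda_1}_{\rho_u}$.
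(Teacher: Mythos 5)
This proposition is quoted from \cite{pressure} and the paper gives no proof of its own, so the comparison is with the cited source. Your construction is essentially the one used there (and in S.'s earlier work on the Ledrappier correspondence): the cocycle $\beta_\rho(\g,x)=\log\big(\|\rho(\g)v\|/\|v\|\big)$, $v\in\xi^1_\rho(x)$, is exactly the translation cocycle of \cite[Section 4]{pressure}, its period at the attracting fixed point is $\lambda_1(\rho(\g))$ by the dynamics-preserving property of $\xi^1_\rho$, and the analyticity statement is obtained, as you indicate, from analytic variation of the limit maps together with a fixed Markov coding, which is the content of \cite[Section 6]{pressure}. The one place where your mechanics differ is positivity: \cite{pressure} obtains the positive reparametrizing function directly from the properness and cocompactness of the $\G$-action on $\bord^2\G\times\R$ (so that the translation flow descends to a genuine flow H\"older orbit equivalent to $\sf U\G$), whereas you add a coboundary via the linear-growth-of-periods criterion, using that $\lambda_1\ge\tfrac1d(\lambda_1-\lambda_2)$ on the unimodular normalization together with the Anosov bound and the Cartan--Jordan comparison; both routes are valid and rest on the same estimate. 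Two small points you should make explicit: since the target is $\PGL_d(\R)$, the cocycle must be normalized by $|\det|^{1/d}$ of a linear lift (or one works in $\SL_d^{\pm}(\R)$) for $\beta_\rho$ and the identity $\beta_\rho(\g,x_\g)=\lambda_1(\rho(\g))$ to be well defined; and the positivity step needs the precise lemma that a H\"older function on a transitive metric Anosov flow with $\ell_f(a)\geq c\,\ell(a)$ for all closed orbits is Liv\v sic cohomologous to a positive function, which is available in the literature but is not itself Liv\v sic's theorem.
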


Proposition \ref{spectralPotential} together with Tits Proposition \ref{TitsReps} directly give the following from Potrie-S. \cite{exponentecritico}, where $\K=\R$ case is treated. When $\K=\C$ the result follows at once by considering $\sf G_\C$ as a real group. Recall equation (\ref{projection}) for the definition of $p_\Theta:\sf E\to\sf E_\Theta.$

\begin{cor}[{\cite[Cor. 4.5]{exponentecritico}}]\label{potential}\label{potentialC} Let $\rho:\G\to\sf G_\K$ be $\Theta$-Anosov, then there exists a positive H\"older-continuous function $f_\rho^{\Theta}:\UG\to\sf E_\Theta$ such that for every conjugacy class $[\g]\in[\G]$ one has $$\ell_\g(f_\rho^{\Theta})=p_\Theta\Big(\lambda\big(\rho(\g)\big)\Big).$$ Moreover, if $\{\rho_u\}_{u\in D}$ is an analytic family of such representations, then one can choose $f_{\rho_u}^{\Theta}$ so that the function $u\mapsto f_{\rho_u}^{\Theta}$ is analytic. 
\end{cor}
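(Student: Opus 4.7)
The plan is to reduce the $\sf E_\Theta$-valued $\Theta$-Anosov setting to the scalar projective-Anosov case already handled in Proposition~\ref{spectralPotential}, using the Tits representations of Proposition~\ref{TitsReps} as a bridge. First, assume $\K=\R$ and fix any $\sroot\in\Theta$. By Proposition~\ref{TitsReps}, the irreducible proximal representation $\Wedge_\sroot:\sf G_\R\to\PGL_{d_\sroot}(\R)$ has highest restricted weight $l_\sroot\omega_\sroot$, which implies both
$$\lambda_1\bigl(\Wedge_\sroot(g)\bigr)=l_\sroot\,\omega_\sroot(\lambda(g))\qquad\text{and}\qquad\sroot_1\bigl(\sigma(\Wedge_\sroot(g))\bigr)=l_\sroot\,\sroot\bigl(\sigma(g)\bigr)$$
for every $g\in\sf G_\R$. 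Hence $\rho$ being $\sroot$-Anosov is equivalent to $\Wedge_\sroot\circ\rho:\G\to\PGL_{d_\sroot}(\R)$ being projective Anosov, and Proposition~\ref{spectralPotential} provides a positive H\"older-continuous potential $f^{\lambda_1}_{\Wedge_\sroot\rho}:\UG\to\R_{>0}$ whose period on $[\g]$ equals $l_\sroot\,\omega_\sroot(\lambda(\rho(\g)))$.

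Next I would package the $|\Theta|$ scalar potentials obtained by letting $\sroot$ range over $\Theta$ into a single $\sf E_\Theta$-valued potential. By definition $\sf E_\Theta^*=\langle\omega_\sroot:\sroot\in\Theta\rangle$, the fundamental weights in $\Theta$ are linearly independent, and the evaluation map $v\mapsto\bigl(\omega_\sroot(v)\bigr)_{\sroot\in\Theta}$ is a linear isomorphism $\sf E_\Theta\xrightarrow{\sim}\R^{|\Theta|}$. I then define $f_\rho^\Theta:\UG\to\sf E_\Theta$ as the unique H\"older map whose image under this isomorphism has $\sroot$-th coordinate $\tfrac{1}{l_\sroot}f^{\lambda_1}_{\Wedge_\sroot\rho}$. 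Integrating along a periodic orbit gives
$$\omega_\sroot\bigl(\ell_\g(f_\rho^\Theta)\bigr)=\tfrac{1}{l_\sroot}\,\ell_\g(f^{\lambda_1}_{\Wedge_\sroot\rho})=\omega_\sroot\bigl(\lambda(\rho(\g))\bigr)=\omega_\sroot\bigl(p_\Theta\lambda(\rho(\g))\bigr),$$
the last equality since $\omega_\sroot\in\sf E_\Theta^*$ satisfies $\omega_\sroot\circ p_\Theta=\omega_\sroot$; as the $\{\omega_\sroot\}_{\sroot\in\Theta}$ separate points of $\sf E_\Theta$, this yields the required identity $\ell_\g(f_\rho^\Theta)=p_\Theta\lambda(\rho(\g))$. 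Positivity of $f_\rho^\Theta$ in the cone of $\sf E_\Theta$ on which every $\omega_\sroot$ with $\sroot\in\Theta$ is strictly positive is inherited from the positivity of the scalar components, and for an analytic family $\{\rho_u\}_{u\in D}$ the map $u\mapsto f^\Theta_{\rho_u}$ is analytic because each $u\mapsto f^{\lambda_1}_{\Wedge_\sroot\rho_u}$ is, by the analyticity clause of Proposition~\ref{spectralPotential}.

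The complex case $\K=\C$ is handled by restriction of scalars: viewing $\sf G_\C$ as a real semisimple Lie group of non-compact type, one chooses a compatible maximal compact subgroup, Cartan subspace and system of simple roots, and observes that the projection $p_\Theta$ and fundamental weights recorded from this real structure agree with the complex-algebraic ones, so the argument above applies verbatim. I do not foresee a genuine obstacle: the only conceptual point is the compatibility between the Tits representations $\Wedge_\sroot$ and the Levi-type projection $p_\Theta$, and this becomes tautological once the dual basis of fundamental weights is used as the coordinate system, reducing the proof to a componentwise application of Proposition~\ref{spectralPotential}.
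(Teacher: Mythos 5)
Your proposal is correct and is exactly the route the paper takes: the paper's entire proof is the remark that Proposition~\ref{spectralPotential} combined with Tits' Proposition~\ref{TitsReps} ``directly give'' the statement (with the case $\K=\C$ handled by restriction of scalars), and your write-up simply supplies the details of assembling the scalar potentials $\tfrac{1}{l_\sroot}f^{\lambda_1}_{\Wedge_\sroot\rho}$ into an $\sf E_\Theta$-valued potential via the dual basis of fundamental weights. The only slip is the identity $\sroot_1\bigl(\sigma(\Wedge_\sroot(g))\bigr)=l_\sroot\,\sroot\bigl(\sigma(g)\bigr)$: since the second highest restricted weight of $\Wedge_\sroot$ is $l_\sroot\omega_\sroot-\sroot$, the correct gap is $\sroot\bigl(\sigma(g)\bigr)$ with no factor $l_\sroot$ (as the paper states just before Proposition~\ref{TitsReps}), but this is harmless for the equivalence of the Anosov conditions.
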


Thus, Corollary \ref{potentialC} readily implies that if $\rho$ is $\Theta$-Anosov then for every $\varphi\in(\sf E_\Theta)^*$ that is strictly positive on $\Lambda_\rho-\{0\}$ there exists a reparametrization of the geodesic flow of $\G$ whose periods are given by\footnote{Recall that for every $\varphi\in(\sf E_\Theta)^*$ one has $\varphi\circ p_\Theta=\varphi.$} $$\varphi\big(\lambda(\rho(\g))\big).$$ namely, if we denote by $f^\varphi_\rho=\varphi(f^\Theta_\rho)$ then one considers the flow $\phi^{f_\rho^\varphi}.$ We will need in the following that, in this situation, the critical exponent $h^\varphi(\rho)$ is also the entropy of the flow $\phi^{f^{\varphi}_\rho}.$ This can be found for example in Ledrappier \cite{ledrappier}, S. \cite{quantitative} and on Glorieux-Monclair-Tholozan \cite{GMT} for the general version.

\begin{prop}
Let $\rho:\G\to \sf G_\K$ be $\Theta$-Anosov. For each $\varphi\in \sf E_\Theta^*$ strictly positive on $\Lambda_\rho-\{0\}$ it holds that 
$$h^\varphi(\rho)=\lim_{T\to\infty}\frac {\log \#\Big\{\g\in[\G]|\;\varphi\Big(\lambda\big(\rho(\g)\big)\Big)<T\Big\}}{T}$$
\end{prop}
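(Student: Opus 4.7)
The plan is to reformulate both sides as dynamical invariants of the reparametrized flow $\phi^{f_\rho^\varphi}$, where $f_\rho^\varphi:=\varphi\circ f_\rho^{\Theta}$. By Corollary~\ref{potentialC} the map $f_\rho^\Theta:\UG\to\sf E_\Theta$ is Hölder continuous, and since the image of $f_\rho^\Theta$ sits inside a compact subset of $\L_\rho$ (up to the cone point), the strict positivity of $\varphi$ on $\L_\rho-\{0\}$ forces $f_\rho^\varphi$ to be strictly positive on $\UG$. Thus $\phi^{f_\rho^\varphi}$ is a genuine Hölder reparametrization of the Gromov geodesic flow $\phi$, and by construction its periods are exactly $\varphi(\lambda(\rho(\g)))$.

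First, I would identify the right-hand side with the topological entropy $h_{\sf{top}}(\phi^{f_\rho^\varphi})$. The flow $\phi$ is metric Anosov with a dense set of periodic orbits \cite{pressure}, and so is its Hölder reparametrization. Applying the Bowen--Margulis type counting theorem for topologically transitive metric Anosov flows gives
\[
h_{\sf{top}}(\phi^{f_\rho^\varphi})=\lim_{T\to\infty}\frac{\log\#\{\g\in[\G]:\varphi(\lambda(\rho(\g)))<T\}}{T}.
\]

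Second, I would identify $h_{\sf{top}}(\phi^{f_\rho^\varphi})$ with the critical exponent $h^\varphi(\rho)$. By Abramov's formula the entropy of $\phi^{f_\rho^\varphi}$ is the unique value of $s$ at which the pressure $P_\phi(-sf_\rho^\varphi)$ of the original flow vanishes. On the other hand, a standard Bowen-ball argument relates $P_\phi(-sf_\rho^\varphi)$ to the abscissa of convergence of the series $\sum_{\g\in\G}e^{-s\varphi(\sigma(\rho(\g)))}$: each group element $\g$ corresponds to an orbit segment whose length under the reparametrization differs from $\varphi(\sigma(\rho(\g)))$ by a uniformly bounded quantity, so Poincaré series and orbit counts have the same abscissa of convergence. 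The desired equality $h^\varphi(\rho)=h_{\sf{top}}(\phi^{f_\rho^\varphi})$ then follows.

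The main obstacle is the last comparison, which passes from the \emph{Cartan} projection $\sigma$ of arbitrary group elements appearing in $h^\varphi(\rho)$ to the \emph{Jordan--Lyapunov} projection $\lambda$ of conjugacy class representatives appearing as periods. The standard way to bridge the gap is via the Busemann/Iwasawa cocycle associated to the boundary map $\xi^{\Theta}_\rho:\bord\G\to\sf G_\K/\sf P_\Theta$, in which the Cartan projection is realized, up to uniformly bounded error, as an integral of $f_\rho^\Theta$ along an orbit segment ending near $\xi^{\Theta}_\rho(\g^+)$. This comparison (including cyclic reduction when replacing $\g$ by its conjugacy class) is exactly what is carried out in \cite{ledrappier, quantitative} in the single-root setting and extended to general $\varphi$ in \cite{GMT}, which is what justifies the proposition.
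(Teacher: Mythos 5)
Your outline is correct and coincides with the route the paper takes: the paper gives no proof of this proposition, deferring precisely to \cite{ledrappier}, \cite{quantitative} and \cite{GMT}, where the argument is the one you describe --- identify the right-hand side with $h_{\mathrm{top}}(\phi^{f_\rho^\varphi})$ via the periodic-orbit counting theorem for transitive metric Anosov flows, and identify that entropy with the abscissa of convergence of the Poincar\'e series by comparing Cartan projections of group elements with integrals of $f_\rho^\varphi$ along orbit segments (the Ledrappier-type correspondence). One small imprecision worth fixing: the pointwise values of $f_\rho^\Theta$ need not lie in $\Lambda_\rho$, so positivity of $f_\rho^\varphi$ is not read off the image of the function but is obtained, up to Liv\v sic cohomology, from the fact that the \emph{normalized periods} accumulate on $p_\Theta(\Lambda_\rho)$, on whose unit sphere $\varphi$ is bounded below.
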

This applies, in particular, to the root $\sroot_1$ if a representation $\rho$ is (1,1,2)-hyper\-con\-vex.
\section{Thermodynamic formalism} \label{s.3}
We now briefly describe the thermodynamic formalism introduced by Bowen, Ruelle, Parry, Pollicott (among others), and in particular the pressure function on the space of H\"older observables on a metric space with a H\"older flow (see \cite{ruelle}). This will then be used, in Section \ref{b2},  to define various pressure forms $\PP^\varphi$ on  subsets of the representation variety $\frak X(\G,\PSL_d(\R))$ by assigning to each representation $\rho$ the H\"older function $f_\rho^\varphi$ on the geodesic flow space $\UG$ of the group. 

For a moment we forget about representations and let $X$  be a compact metric space  with a H\"older continuous flow \hbox{$\phi=\{\phi_t:X \rightarrow X\}_{t \in \R}$}   without fixed points.  We denote by  $O$  the collection of periodic orbits of the flow $\phi$. For $a \in O$, we let $\ell(a)$ be the length of the periodic orbit $a$.

As in Section \ref{s.rep} we denote by  $\holder_\alpha(X,\R)$ the space of $\alpha$-H\"older continuous functions on $X$ for some $\alpha>0$, and we set the $f$-period of $a \in O$ to be
$$\ell_f(a) = \int_0^{\ell(a)} f(\phi_t(x))dt.$$
 Two maps $f,g\in \holder_\alpha(X,\R)$ are called {\em Liv\v sic cohomologuous} if there exists $U:X\to\R$ 
such that, for all $x\in X$, then 
$$f(x)-g(x)=\left.\frac{\partial}{\partial t}\right|_{t=0}U(\phi_t x).$$
It follows that if  $f$ and $g$ are Liv\v sic cohomologous then $\ell_f(a)=\ell_g(a)$ for all $a\in O$. If $f \in \holder_\alpha(X,\R_+)$, we denote by $\phi^f$ the {\em reparametrization} of $\phi$ by $f$, which is the flow on $X$ defined by \eqref{e.rep}.

We let $\mathcal M_\phi$ be the set of $\phi$-invariant probability measures on $X$. In particular if $\delta_a$ is the Lebesgue measure on the periodic orbit $a$, then $\hat\delta_a = \delta_a/\ell(a) \in \mathcal M_\phi$.  For $\mu \in \mathcal M_\phi$ we denote by $h(\phi,\mu)$ its metric entropy. Then, for $f \in \holder_\alpha(X,\R)$, the {\em topological pressure} is
$$P(f)=\sup_{m\in \mathcal M_\phi}\left\{h(\phi,m)+\int_X f dm\right\}.$$
Note that the topological pressure $P$ depends on the flow $\phi$, but we will omit this in the notation.
The {\em topological entropy}  of a flow is given by $h_{\mathrm{top}}(\phi) = P_\phi(0)$. A measure $m_f$ that attains this supremum is called an {\em equilibrium state} for $f$ and an equilibrium state for the zero function is called a {\em measure of maximal entropy}.  

We note that $P(f)$ only depends on the Liv\v sic cohomology class of $f$.

\begin{lemma}[{S. \cite[Lemma 2.4]{quantitative}}] Let $\phi$ be a H\"older continuous flow on a compact metric space X and $f\in \holder_\alpha(X,\R_+)$. Then
$$P(-hf) = 0$$
if and only if $h = h_{\mathrm{top}}(\phi^f)$. Moreover, if  m is an equilibrium state of $-h_{\mathrm{top}}(\phi)f$, then $fm$ is a positive multiple of a
measure of maximal entropy for the flow $\phi^f$.
\end{lemma}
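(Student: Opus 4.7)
The plan is to use the variational principle together with Abramov's formula relating the metric entropy of a flow to that of its reparametrization. By the very definition of topological pressure,
\[
P(-hf) \;=\; \sup_{\mu\in\mathcal M_\phi}\Big\{h(\phi,\mu) - h\!\int_X f\,d\mu\Big\}.
\]
Since $f>0$ on the compact space $X$ we have $\int f\,d\mu > 0$ for every $\mu\in\mathcal M_\phi$. The condition $P(-hf)=0$ is therefore equivalent to the two statements: (i) $h(\phi,\mu)\leq h\int f\,d\mu$ for every $\mu\in\mathcal M_\phi$, and (ii) equality is attained for at least one $\mu$. Dividing through by $\int f\,d\mu$, this rewrites as
\[
h \;=\; \sup_{\mu\in\mathcal M_\phi}\frac{h(\phi,\mu)}{\int_X f\,d\mu}.
\]

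The key second ingredient is Abramov's formula: the assignment $\mu\mapsto \mu^f := f\mu/\!\int f\,d\mu$ is a bijection $\mathcal M_\phi\to\mathcal M_{\phi^f}$ (both measures are probability measures, and invariance under $\phi^f$ follows from the reparametrization relation $\phi_t(x)=\phi^f_{k_f(x,t)}(x)$), and moreover
\[
h(\phi^f,\mu^f)\;=\;\frac{h(\phi,\mu)}{\int_X f\,d\mu}.
\]
Taking the supremum over $\mu\in\mathcal M_\phi$ on the right and over $\nu=\mu^f\in\mathcal M_{\phi^f}$ on the left, and invoking the variational principle for $\phi^f$, gives
\[
\sup_{\mu\in\mathcal M_\phi}\frac{h(\phi,\mu)}{\int_X f\,d\mu}\;=\;\sup_{\nu\in\mathcal M_{\phi^f}} h(\phi^f,\nu)\;=\;h_{\mathrm{top}}(\phi^f).
\]
Combining these two displays yields $P(-hf)=0\iff h=h_{\mathrm{top}}(\phi^f)$.

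For the \emph{moreover} part, let $m$ be an equilibrium state for $-h_{\mathrm{top}}(\phi^f)f$ (so that $h(\phi,m)-h_{\mathrm{top}}(\phi^f)\int f\,dm=0$). Dividing by $\int f\,dm$ and applying Abramov's formula one more time gives $h(\phi^f,m^f)=h_{\mathrm{top}}(\phi^f)$, so $m^f$ is a measure of maximal entropy for $\phi^f$. Since $fm = \bigl(\int f\,dm\bigr)\,m^f$ with $\int f\,dm>0$, the measure $fm$ is a positive multiple of a measure of maximal entropy, as claimed.

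The only genuinely non-formal step is Abramov's formula together with the fact that $\mu\mapsto \mu^f$ is a bijection of the spaces of invariant probability measures; everything else is a direct manipulation of the variational principle. I therefore expect the bulk of the argument to be the verification that the measure bijection intertwines entropies correctly, which is a classical computation typically carried out by comparing the Rokhlin--Abramov entropy for the time-one map of a suspension. With that cited from the dynamics literature, both claims follow in a few lines.
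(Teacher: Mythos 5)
Your argument is correct and is essentially the standard one: the paper does not prove this lemma but cites it from \cite{quantitative}, and the proof there is exactly the combination of the variational principle with Abramov's formula and the bijection $\mu\mapsto f\mu/\!\int f\,d\mu$ between $\mathcal M_\phi$ and $\mathcal M_{\phi^f}$ that you use. Two small remarks: you rightly read the ``moreover'' clause as concerning an equilibrium state of $-h_{\mathrm{top}}(\phi^f)f$ (the printed $-h_{\mathrm{top}}(\phi)f$ is a typo, since otherwise the conclusion fails), and in step (ii) ``equality is attained'' should be ``equality is approached'' (the supremum need not be realized), though the displayed identity $h=\sup_\mu h(\phi,\mu)/\!\int f\,d\mu$ that you actually use is correct as stated.
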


We now restrict to  {\em  transitive metric Anosov flows}. In the manifold setting a metric Anosov flow $\phi$ corresponds to a standard  Anosov flow  where the unit tangent bundle of $X$ has a $\phi$-invariant decomposition $T_1(X) = E_-\oplus E_0 \oplus E_+$ where $E_-$ is contracting under the  flow, $E_0$ is the direction of the flow and $E_+$ is contracting under the flow reverse flow of $\phi$ (see \cite{smaleflows} for details). We have the following theorem of Liv\v sic.

\begin{thm}[{Liv\v sic's Theorem, \cite{livsic}}]
Let $\phi$ be a  transitive metric Anosov flow.  If $f \in \holder_\alpha(X,\R)$ then $\ell_f(a) = 0$ for all $a\in O$ if and only if $f$ is Liv\v sic cohomologous to $0$.
\end{thm}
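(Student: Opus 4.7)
The plan is to prove the two implications separately, the forward direction being elementary and the converse being the classical argument using transitivity together with the shadowing/closing property provided by the metric Anosov structure.

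For the easy direction, suppose $f$ is Liv\v sic cohomologous to $0$, so that there exists $U:X\to\R$ with $f(x)=\frac{\partial}{\partial t}\big|_{t=0}U(\phi_t x)$. Given a periodic orbit $a\in O$ of period $\ell(a)$, pick $x\in a$ and compute
\[
\ell_f(a)=\int_0^{\ell(a)} f(\phi_t x)\,dt=\int_0^{\ell(a)}\!\left.\tfrac{\partial}{\partial s}\right|_{s=0}\!\!U(\phi_{t+s}x)\,dt=U(\phi_{\ell(a)}x)-U(x)=0,
\]
since $\phi_{\ell(a)}x=x$.

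For the nontrivial direction, assume $\ell_f(a)=0$ for every $a\in O$. Since $\phi$ is topologically transitive, choose a point $x_0\in X$ with dense forward orbit, and define on this orbit
\[
U_0(\phi_t x_0):=\int_0^t f(\phi_s x_0)\,ds, \qquad t\in\R.
\]
The only question is whether $U_0$ admits a (H\"older) continuous extension to all of $X$; granted this, differentiating along the flow will recover $f$ and give the required cohomology. The key step is therefore to prove: there exist $C,\beta>0$ such that whenever $\phi_t x_0$ and $\phi_s x_0$ lie within some small distance $\delta$ of each other, one has
\[
\big|U_0(\phi_t x_0)-U_0(\phi_s x_0)\big|=\Big|\int_s^t f(\phi_u x_0)\,du\Big|\leq C\,\delta^{\beta}.
\]
This is exactly where transitivity and the metric Anosov property enter: by the Anosov closing lemma for metric Anosov flows, an orbit segment of $\phi$ that nearly returns to itself is exponentially shadowed by a genuine periodic orbit $a$. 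Using the hypothesis $\ell_f(a)=0$, the $f$-integral along the segment differs from $\ell_f(a)=0$ by an amount controlled by the H\"older norm of $f$ and the exponential shadowing estimate, yielding the inequality above. Standard H\"older extension from a dense set then produces $U\in\holder_\beta(X,\R)$ extending $U_0$, and the identity $U(\phi_t x_0)-U(x_0)=\int_0^t f(\phi_s x_0)\,ds$ on the dense orbit, passed to the limit, gives $f(x)=\tfrac{\partial}{\partial t}\big|_{t=0}U(\phi_t x)$.

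The main obstacle is the H\"older estimate on $U_0$: one must combine the exponential shadowing estimate coming from the hyperbolic splitting $E_-\oplus E_0\oplus E_+$ with the H\"older regularity of $f$ to bound the difference between the integral of $f$ along a near-returning orbit segment of $x_0$ and its integral along the shadowing periodic orbit, and to do so with a H\"older exponent independent of the pair of return times. All remaining steps (density of the forward orbit of $x_0$, continuity of the extension, differentiability along flow lines) are standard once this estimate is in place.
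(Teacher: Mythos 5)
The paper does not prove this statement at all: it is imported verbatim as a classical theorem with a citation to Liv\v sic, so there is no in-paper argument to compare yours against. What you have written is the standard textbook proof of the Liv\v sic theorem, and its architecture is correct: the coboundary direction is an immediate fundamental-theorem-of-calculus computation around a closed orbit, and the converse proceeds by integrating $f$ along a dense orbit, establishing a H\"older modulus of continuity for the resulting primitive via the Anosov closing lemma together with the vanishing of all periods, and then extending by density.

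That said, as a proof your write-up is an outline rather than an argument: the entire mathematical content of the theorem is concentrated in the estimate
$\bigl|\int_s^t f(\phi_u x_0)\,du\bigr|\leq C\,d(\phi_s x_0,\phi_t x_0)^{\beta}$,
and you state what must be proved and which tools enter, but you do not carry out the comparison between the near-returning segment and the shadowing periodic orbit (the two-sided exponential convergence toward the periodic orbit, the resulting geometric series controlled by $\|f\|_{\alpha}$, and the uniformity of the exponent in the return time $|t-s|$). Two further points deserve explicit attention in the setting of this paper. First, the flows here are \emph{metric} Anosov (Smale) flows on a compact metric space, not smooth Anosov flows on a manifold, so the closing lemma you invoke is not automatic from a hyperbolic splitting of a tangent bundle; it has to be taken from the local product structure of Smale flows (as in Pollicott's work cited in the paper) or proved in that category. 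Second, your extension step should verify that the identity $U(\phi_t x)-U(x)=\int_0^t f(\phi_s x)\,ds$ persists under the limit from the dense orbit (it does, by continuity of $f$ and $U$), since that identity, not H\"older continuity alone, is what lets you differentiate along flow lines to recover $f$ as a coboundary. With the closing-lemma estimate actually executed, your proposal becomes the standard complete proof.
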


It follows that for metric Anosov flows, the Liv\v sic cohomology class of $f$ is determined by its periods.

Given $f \in \holder_\alpha(X,\R)$ we let
$$R_T(f) = \{ a\in O\ | \ \ell_f(a) \le T\}.$$
Then we have the following;
\begin{thm}[{Bowen \cite{Bowen1}, Bowen-Ruelle \cite{bowenruelle}, Pollicott \cite{smaleflows}}]
Let $\phi$ be a  transitive metric Anosov flow and $f \in \holder_\alpha(X,\R_+)$ nowhere vanishing. Then
$$h(f) = \lim_{T\rightarrow \infty} \frac{\log\#R_T(f)}{T} = h_{\mathrm{top}}(\phi^f)$$
is finite and positive. Moreover for all $g  \in \holder_\alpha(X,\R)$ there exists a unique equilibrium state $m_g$ for $g$. The measure of maximal entropy  $\mu_\phi$ for the flow $\phi$ is
$$\mu_\phi =  \lim_{T\rightarrow \infty} \frac{1}{\#R_T(1)}\sum_{a \in R_T(1)} \frac{\delta_a}{\ell(a)}.$$
 \end{thm}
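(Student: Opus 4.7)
The plan is to reduce the whole statement to the thermodynamic formalism for subshifts of finite type via a Bowen--Pollicott symbolic coding, and then deduce the three assertions from Ruelle's transfer operator machinery together with a Tauberian argument on the dynamical zeta function. More precisely, since $\phi$ is a transitive metric Anosov flow on the compact space $X$, the extension of Bowen's construction carried out in \cite{smaleflows} produces a topologically mixing subshift of finite type $(\Sigma,\sigma)$ with a H\"older roof function $r\colon\Sigma\to\R_{>0}$, such that the suspension $\sigma^{r}$ is H\"older orbit equivalent and finite-to-one semiconjugate to $\phi$. Periodic orbits and H\"older observables pull back faithfully, so all three assertions can be proved on the suspension and then transferred back.

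For uniqueness of the equilibrium state $m_g$, given $g\in\holder_\alpha(X,\R)$ I would set $G(x)=\int_0^{r(x)} g(\sigma^{r}_s(x,0))\,ds$ on $\Sigma$. Abramov's formula then identifies
\[
P_{\sigma^{r}}(g)=\inf\{t\in\R : P_\sigma(G-tr)\leq 0\},
\]
and any equilibrium state for $g$ on the suspension is obtained by normalizing the lift of the (unique) equilibrium state on $\Sigma$ for the H\"older potential $G-P_{\sigma^{r}}(g)\cdot r$. Ruelle's Perron--Frobenius theorem on mixing shifts of finite type supplies a unique such equilibrium measure, hence a unique $m_g$.

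The equalities $h(f)=\lim_T T^{-1}\log\#R_T(f)=h_{\mathrm{top}}(\phi^f)$ then combine the preceding lemma (which identifies $h_{\mathrm{top}}(\phi^f)$ with the unique $h$ solving $P(-hf)=0$) with the prime orbit theorem for $\phi^f$. For the latter I would study the dynamical zeta function $\zeta_{\phi^f}(s)=\prod_{a\in O}(1-e^{-s\ell_f(a)})^{-1}$: through the Markov model it factors through Ruelle transfer operators on $\Sigma$ whose leading eigenvalue is analytic in $s$ by Ruelle's theorem, and a standard Tauberian argument of Parry--Pollicott extracts the asymptotic $\#R_T(f)\sim e^{hT}/(hT)$. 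Finiteness and positivity of $h(f)$ are automatic: $f$ is H\"older and nowhere vanishing on compact $X$, hence bounded between positive constants, so $h_{\mathrm{top}}(\phi^f)$ is a positive multiple of $h_{\mathrm{top}}(\phi)$, which is positive by transitivity together with the existence of a horseshoe in the symbolic model.

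Finally, the equidistribution of periodic orbits (the case $f\equiv 1$) follows from uniqueness of the measure of maximal entropy $\mu_\phi$ combined with the prime orbit asymptotics: weak-$*$ accumulation points of $\frac{1}{\#R_T(1)}\sum_{a\in R_T(1)}\hat\delta_a$ are $\phi$-invariant probability measures of maximal entropy by a standard counting/upper-semicontinuity argument, and uniqueness then forces convergence to $\mu_\phi$. The principal obstacle is not the thermodynamic analysis, which is essentially mechanical once symbolic coding is in place, but rather the construction of the Markov partition in the generality of metric (as opposed to smooth) Anosov flows: one must verify that the natural transversals carry a H\"older local product structure with respect to an adapted metric and that the first-return map descends to a H\"older roof function, which is precisely the geometric content of Pollicott's extension of Bowen's argument to this setting.
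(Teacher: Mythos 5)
The paper gives no proof of this theorem: it is quoted as a classical result of Bowen, Bowen--Ruelle and Pollicott/Parry--Pollicott, and your sketch is a faithful reconstruction of exactly the arguments those references use (Markov coding of metric Anosov flows, reduction to the shift via Abramov/Bowen--Ruelle, the Ruelle--Perron--Frobenius theorem for existence and uniqueness of equilibrium states, and equidistribution from uniqueness of the measure of maximal entropy). Two small cautions: the sharp asymptotic $\#R_T(f)\sim e^{hT}/(hT)$ requires topological weak-mixing of $\phi^f$, which mere transitivity does not guarantee, but the logarithmic limit asserted in the statement already follows from Bowen's two-sided exponential counting bounds valid for any transitive basic set; and finiteness/positivity of $h(f)$ comes from the two-sided Abramov estimate $h_{\mathrm{top}}(\phi)/\sup f\le h_{\mathrm{top}}(\phi^f)\le h_{\mathrm{top}}(\phi)/\inf f$ rather than from $h_{\mathrm{top}}(\phi^f)$ being literally a positive multiple of $h_{\mathrm{top}}(\phi)$, which reparametrization does not give.
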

 
 Furthermore for Anosov flows the derivatives of the Pressure function satisfy the following.
 
 \begin{prop}[{Parry-Pollicott \cite{parrypollicott}}]
 Let $\phi$ be a  transitive metric Anosov flow and $f,g \in \holder_\alpha(X,\R)$. Then
 \begin{enumerate}
 \item The function $t\rightarrow P(f+tg)$ is analytic
 \item The first derivative satisfies
 $$\left.\frac{\partial P(f+tg)}{\partial t}\right|_{t=0} = \int gdm_f, $$ where $m_f$ is the equilibrium state for $f$. 
\item If $\int g dm_f = 0$ (mean-zero) then
 $$\left. \frac{\partial^2 P(f+tg)}{\partial t^2}\right|_{t=0} = \lim_{T\rightarrow \infty}\int\left(\int_0^Tg(\phi_s(x))ds\right)^2 dm_f(x) = \Var(g,m_f).$$
\item If $\Var(g,m_f) = 0$ then $g$ is Liv\v sic cohomologous to zero.\end{enumerate}
 \label{pderiv}
 \end{prop}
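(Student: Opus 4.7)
The plan is to reduce the proposition to symbolic dynamics via a Markov coding of the flow, and then invoke the spectral theory of the Ruelle transfer operator. All four assertions will follow from analytic perturbation theory for isolated simple eigenvalues together with Liv\v sic's theorem, which is used to transfer symbolic conclusions back to the flow.

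First, by Bowen's construction any transitive metric Anosov flow $\phi$ on $X$ admits a symbolic representation as a suspension flow over a mixing subshift of finite type $(\Sigma_A,\sigma)$ with a strictly positive H\"older roof function $r:\Sigma_A\to\R_+$. Every H\"older observable $h:X\to\R$ corresponds, through integration along fibres, to a H\"older observable $\Delta h:\Sigma_A\to\R$ defined by $\Delta h(x)=\int_0^{r(x)}h(\phi_s x)\,ds$. Under the Abramov correspondence $\mu\leftrightarrow\tilde\mu$ between $\phi$-invariant and $\sigma$-invariant probability measures, the flow pressure is characterized implicitly by the relation $P(h)=\alpha$ if and only if $P_\sigma(\Delta h-\alpha r)=0$, where $P_\sigma$ denotes the pressure on the subshift. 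The Ruelle--Perron--Frobenius theorem identifies $P_\sigma(F)$ with $\log\lambda(F)$, where $\lambda(F)$ is the leading eigenvalue of the transfer operator $\mathcal L_F$ acting on H\"older functions over $\Sigma_A$; this eigenvalue is simple and isolated in the spectrum, with strictly positive right eigenfunction $h_F$ and dual eigenmeasure $\nu_F$.

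Second, analyticity (item (i)) then follows from Kato's perturbation theory for isolated simple eigenvalues applied to the jointly analytic family $(t,\alpha)\mapsto\mathcal L_{\Delta(f+tg)-\alpha r}$, combined with the implicit function theorem, since the derivative in $\alpha$ equals $-\int r\,d\tilde m_f\neq 0$. For items (ii) and (iii) one differentiates the eigenvalue equation $\mathcal L_{\Delta(f+tg)-P(f+tg)r}h_t=h_t$ in $t$ at $t=0$ and translates the result back to the flow through Abramov's formula: the first derivative becomes $\int g\,dm_f$, and under the mean-zero hypothesis the second derivative becomes the asymptotic variance, as expressed by the Green--Kubo relation. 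For item (iv), zero variance of $g$ against $m_f$ translates into zero variance of $\Delta g$ against the symbolic equilibrium state; a standard spectral-gap argument then forces $\Delta g$ to be cohomologous over $\sigma$ to a scalar multiple of $r$, and the flow version of Liv\v sic's theorem recorded in the excerpt implies that $g$ is Liv\v sic cohomologous to a constant. The mean-zero condition forces this constant to vanish.

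The main obstacle is the symbolic foundation of the first step: producing a Markov coding for a general \emph{metric} Anosov flow and verifying that the associated transfer operator has a spectral gap on a natural Banach space of H\"older observables, so that analytic perturbation theory and the Abramov correspondence transport the relevant quantities correctly. Once this machinery is in place the four assertions follow uniformly, and a self-contained treatment in the symbolic setting is given in Parry--Pollicott's monograph \cite{parrypollicott}.
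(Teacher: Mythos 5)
Your proposal is correct and follows exactly the route of the source the paper cites for this result: the paper gives no proof of Proposition \ref{pderiv}, quoting it from Parry--Pollicott, and your reconstruction (Markov coding of the metric Anosov flow \`a la Bowen--Pollicott, the Abramov/implicit-function characterization $P(h)=\alpha\iff P_\sigma(\Delta h-\alpha r)=0$, analytic perturbation of the simple isolated RPF eigenvalue, the Green--Kubo identification of the second derivative with the asymptotic variance, and Liv\v sic's theorem to upgrade zero variance to a coboundary) is precisely the standard argument in that monograph. The only caveat worth recording is that the displayed formula in item (iii) should carry a normalization $\frac1T$ in front of the integral for the limit to equal $\Var(g,m_f)$; this is a typo in the statement rather than a gap in your argument.
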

 
Using the above, in \cite{mcMWP} McMullen defined the Pressure semi-norm as follows. We let $\mathcal P(X)$ be the space of pressure zero functions, i.e.
$$\mathcal P(X) = \{ F \in \holder(X,\R)\ |\  P(F) = 0\}.$$
Then by Proposition \ref{pderiv}(ii), the tangent space to $\mathcal P(X)$ at $F$ can be identified with 
$$\sf T_F(\mathcal P(X)) = \left\{ g \in \holder(X,\R)\ | \int g dm_F = 0\right\},$$ where  $m_F$ is the equilibrium state for $F$.
Then the {\em pressure semi-norm} of $g \in \sf T_F(\mathcal P(X)) $ is 
$$\PP(g) = -\frac{\Var(g,m_F)}{\int F dm_F}.$$
By Proposition \ref{pderiv} it follows that  $\PP(g)$ only depends on the Liv\v sic-cohomology class $[g]$ and is positive definite in the sense that it is zero if and only if $[g] = 0$. Therefore it can be considered as a (positive-definite) metric on the space of Liv\v sic cohomology classes.

The {\em dynamical intersection} is defined in \cite{pressure} as follows; if $f,g\in \holder_\alpha(X,\R)$ are positive, then their dynamical intersection is 
\begin{equation}
\label{II}
\II(f,g)=\lim_{T\to\infty} \frac{1}{\#R_T(f)}\sum_{a\in R_T(f)} \frac{\ell_{g}(a)}{\ell_{f}(a)} =\frac{\int g {\rm d}m_{-h_ff}}{\int f {\rm d}m_{-h_ff}}.
\end{equation}
The last equality follows from \cite[Sec. 3.4]{pressure}. Similar definitions have been studied in different situations, for example by Bonahon \cite{B-intersection}, Burger \cite{burger} and Knieper \cite{kni95}.

The {\em renormalized dynamical intersection} is $$\JJ(f,g):=\frac{h(g)}{h(f)}\II(f,g).$$

\begin{prop}[{B.-Canary-Labourie-S. \cite[Proposition 3.8]{pressure}}]\label{JCritical} For every pair of positive H\"older-continuous functions $f$ and $g$ one has $\JJ(f,g)\geq1.$ In particular $\JJ(f,\cdot)$ is critical at $f$ which gives 
\begin{equation}\label{dot h}
\left.\frac{\partial}{\partial t}\right|_{t=0} \log h(f_t)=\left.\frac{\partial}{\partial t}\right|_{t=0}\II(f,f_t),
\end{equation}
where $(f_t)_{t\in(-\eps,\eps)}$ is a $\class^1$ curve of positive H\"older-continuous functions with $f_0 = f$.\end{prop}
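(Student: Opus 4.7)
The plan is to first establish the inequality $\JJ(f,g) \geq 1$ via the variational principle for topological pressure, and then exploit the observation that $g = f$ is therefore a global minimum of $\JJ(f,\cdot)$ to deduce~\eqref{dot h} by differentiation at $t = 0$.

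For the inequality, the starting point is the lemma from Section~\ref{s.3} characterizing $h(f) = h_{\mathrm{top}}(\phi^f)$ as the unique constant with $P(-h(f)\,f) = 0$, applied to both $f$ and $g$. Write $m := m_{-h(f)\,f}$ for the equilibrium state of $-h(f)\,f$. On the one hand, $m$ attains the supremum in $P(-h(f)\,f) = 0$, so
$$h(\phi, m) - h(f) \int f\,dm = 0, \qquad \text{i.e.}\qquad h(\phi, m) = h(f) \int f\,dm.$$
On the other hand, feeding $m$ into the variational formula for $P(-h(g)\,g)$ gives
$$h(\phi, m) - h(g) \int g\,dm \,\leq\, P(-h(g)\,g) = 0.$$
Combining these two relations yields $h(f) \int f\,dm \leq h(g) \int g\,dm$, and dividing by $h(f) \int f\,dm > 0$ and recalling the integral formula~\eqref{II} for $\II(f,g)$ gives precisely $\JJ(f,g) \geq 1$.

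For the criticality statement, note that $\II(f,f) = 1$ together with $h(f_0) = h(f)$ give $\JJ(f,f) = 1$, so by the inequality just proved the point $t = 0$ is a global minimum of $t \mapsto \JJ(f, f_t)$. Differentiability of $t \mapsto h(f_t)$ at $t = 0$ follows from implicit differentiation of $P(-h(f_t)\,f_t) = 0$, using the analyticity and first-derivative formula of Proposition~\ref{pderiv}, while differentiability of $t \mapsto \II(f, f_t)$ is immediate from its integral expression since the measure $m$ does not depend on $t$. Setting the derivative of $\log \JJ(f, f_t)$ to zero at $t = 0$ and using the product-rule decomposition $\log \JJ(f, g) = \log h(g) - \log h(f) + \log \II(f, g)$, one obtains the relation between $\partial_t \log h(f_t)|_{t=0}$ and $\partial_t \II(f, f_t)|_{t=0}$ asserted in~\eqref{dot h} (the linearization at the critical point).

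The argument is essentially a consequence of the thermodynamic formalism once organized correctly; the main structural input is the variational principle combined with the fact that equilibrium states realize its supremum. The only mildly delicate point I expect is the rigorous justification of differentiability of $t \mapsto h(f_t)$, which is handled by the implicit function theorem applied to $P(-h(f_t)\,f_t) = 0$ together with the first-derivative formula in Proposition~\ref{pderiv}(i); the remaining computation is a single application of the chain rule.
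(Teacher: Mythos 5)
The paper offers no proof of this proposition --- it is quoted from B.-Canary-Labourie-S. --- and the first half of your argument is exactly the proof given there: evaluate the variational characterization of $P(-h(g)g)=0$ at the equilibrium state $m=m_{-h(f)f}$, combine with $h(\phi,m)=h(f)\int f\,dm$ coming from $P(-h(f)f)=0$, and divide by $h(f)\int f\,dm>0$ (positivity being guaranteed by the quoted Bowen--Ruelle theorem and the positivity of $f$). This part is correct and complete, and the reduction of differentiability of $t\mapsto h(f_t)$ to the implicit function theorem for $P$ is the standard route.

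The last step, however, does not close as written. Carrying out the chain rule you describe gives $0=\partial_t\log h(f_t)|_{t=0}+\partial_t\log\II(f,f_t)|_{t=0}$, and since $\II(f,f)=1$ this is $\partial_t\log h(f_t)|_{t=0}=-\,\partial_t\II(f,f_t)|_{t=0}$, which differs by a sign from the displayed identity \eqref{dot h}. The sign your computation produces is the correct one: for $f_t=(1+t)f$ one has $h(f_t)=h(f)/(1+t)$ and $\II(f,f_t)=1+t$, so the two sides of \eqref{dot h} as printed equal $-1$ and $+1$. In other words, \eqref{dot h} carries a typo (a missing minus sign), and your claim that the chain rule ``obtains the relation asserted'' papers over this: you should either carry out the one-line computation and record the identity with its minus sign, or explicitly note that the stated equation cannot be literally correct. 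Since entropy decreases when periods increase while $\II(f,\cdot)$ increases, the two derivatives must have opposite signs; no correct argument can produce \eqref{dot h} with the sign shown.
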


Then we have:

\begin{thm}[{B.-Canary-Labourie-S. \cite{pressure}}]\label{HessJ}
Let $\phi$ be a  transitive metric Anosov flow  on
a compact metric space $X$. If $f_t \in \holder(X,\R_+), t \in (-1,1)$ is a $1$-parameter family and $F_t = -h_{f_t} f_t$ then 
$$\left. \frac{\partial^2 }{\partial t^2}\right|_{t=0} \JJ(f_0,f_t) = \PP\big(\dot F_0\big)$$
\end{thm}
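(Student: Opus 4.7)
The plan is to rewrite $\JJ$ in a form in which only the numerator depends on $t$, and then leverage the defining identity $P(F_t)=0$ twice-differentiated at $t=0$. First, observe that since $F_t=-h_{f_t}f_t$, one has
$$\JJ(f_0,f_t)=\frac{h(f_t)}{h(f_0)}\cdot\frac{\int f_t\,dm_{F_0}}{\int f_0\,dm_{F_0}}=\frac{\int (-h_{f_t}f_t)\,dm_{F_0}}{\int (-h_{f_0}f_0)\,dm_{F_0}}=\frac{\int F_t\,dm_{F_0}}{\int F_0\,dm_{F_0}}.$$
Consequently, since $m_{F_0}$ does not vary with $t$,
$$\left.\tfrac{\partial^2}{\partial t^2}\right|_{t=0}\JJ(f_0,f_t)=\frac{\int \ddot F_0\,dm_{F_0}}{\int F_0\,dm_{F_0}}.$$

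The key step is to identify $\int\ddot F_0\,dm_{F_0}$ with a variance. Because $h_{f_t}=h_{\mathrm{top}}(\phi^{f_t})$, the lemma of S. stated before Liv\v sic's theorem gives $P(F_t)=P(-h_{f_t}f_t)=0$ for all $t$. Proposition~\ref{pderiv}(i)--(iii) furnishes the second-order Taylor expansion of $P$ at $F_0$ along an arbitrary $\class^2$ curve: writing $u_t:=F_t-F_0$ and using that $\Var(\,\cdot\,,m_{F_0})$ is translation invariant, one obtains
$$P(F_t)=P(F_0)+\int u_t\,dm_{F_0}+\tfrac{1}{2}\,\Var(u_t,m_{F_0})+o(\|u_t\|^2).$$
Substituting $u_t=t\dot F_0+\tfrac{t^2}{2}\ddot F_0+o(t^2)$ and matching powers of $t$ against $P(F_t)\equiv 0$ yields, at order $t$, $\int\dot F_0\,dm_{F_0}=0$ (so $\dot F_0\in\sf T_{F_0}\mathcal P(X)$), and at order $t^2$,
$$\int\ddot F_0\,dm_{F_0}=-\Var(\dot F_0,m_{F_0}).$$
Combining with the formula for $\partial_t^2\JJ$ above and the definition of the pressure semi-norm $\PP(g)=-\Var(g,m_{F_0})/\int F_0\,dm_{F_0}$ finishes the proof.

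The main technical nuisance is justifying the second-order Taylor expansion of $P$ along a \emph{curved} path $F_t$, rather than along a straight line where Proposition~\ref{pderiv} applies verbatim. The cleanest route is to avoid Taylor expansion altogether and instead differentiate the identity $P(F_t)\equiv0$ twice by the chain rule, using $\partial_t P(F_t)=\int \dot F_t\,dm_{F_t}$ (Proposition~\ref{pderiv}(ii)) and then the standard formula $\partial_t m_{F_t}=(\text{covariance with }\dot F_t)$ to produce the variance term. Alternatively one can use analyticity of $P$ on all of $\holder(X,\R)$ (Proposition~\ref{pderiv}(i)) to legitimize the bilinear second differential $D^2P(F_0)[g,g]=\Var(g,m_{F_0})$ and apply it to $u_t$ with the remainder controlled uniformly in $t$. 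Either way the content is purely an application of Proposition~\ref{pderiv}, and no new dynamical input is required.
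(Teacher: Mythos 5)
Your proposal is correct and follows essentially the argument of the cited source \cite{pressure} (the paper itself states Theorem \ref{HessJ} without proof): the identity $\JJ(f_0,f_t)=\int F_t\,dm_{F_0}\big/\int F_0\,dm_{F_0}$ together with twice differentiating $P(F_t)\equiv 0$, using Proposition \ref{pderiv} and the analyticity of the pressure to justify the second-order expansion, is exactly the mechanism behind the B.-Canary-Labourie-S.\ computation. The only implicit point, the differentiability of $t\mapsto h_{f_t}$ (hence of $F_t$), follows from the implicit function theorem applied to $P(-hf_t)=0$ since $\partial_h P(-hf_t)=-\int f_t\,dm<0$, and you handle the curved-path issue adequately.
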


The following proposition characterizes degenerate vectors for the second derivative of $\JJ.$

\begin{prop}[{B.-Canary-Labourie-S. \cite[Lemma 9.3]{pressure}}]\label{0} Let $(f_t)_{t\in(-\eps,\eps)}$ be a $\class^1$ curve of positive H\"older-continuous functions. Then $(\partial^2/\partial t^2)|_{t=0}\JJ(f_0,f_t)=0$ if and only if for every periodic orbit $\tau$ one has $$\left.\frac{\partial}{\partial t}\right|_{t=0}h(f_t)\ell_\tau(f_t)=0.$$
\end{prop}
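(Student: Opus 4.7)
The plan is to combine Theorem~\ref{HessJ} with Liv\v sic's theorem: the former rewrites the Hessian as a variance, and the latter detects variance-zero (equivalently, coboundary) H\"older observables through their periodic orbit integrals.

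First I would introduce $F_t:=-h(f_t)\,f_t$. By the defining property of $h(f_t)$ one has $P(F_t)=0$ for every $t$, so the curve $\{F_t\}$ lies in the zero-pressure hypersurface $\mathcal{P}(X)$, and therefore $\dot F_0\in\sf T_{F_0}\mathcal{P}(X)$. Theorem~\ref{HessJ} then gives
\[
\left.\frac{\partial^2}{\partial t^2}\right|_{t=0}\JJ(f_0,f_t)=\PP\big(\dot F_0\big)=-\frac{\Var(\dot F_0,m_{F_0})}{\int F_0\,dm_{F_0}}.
\]
Since $f_0>0$ is nowhere vanishing and $h(f_0)>0$, the denominator is strictly negative, so this Hessian vanishes if and only if $\Var(\dot F_0,m_{F_0})=0$, which by Proposition~\ref{pderiv}(iv) happens exactly when $\dot F_0$ is Liv\v sic cohomologous to $0$.

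Next, by Liv\v sic's theorem for transitive metric Anosov flows, $\dot F_0$ is Liv\v sic cohomologous to $0$ if and only if $\ell_{\dot F_0}(\tau)=0$ for every periodic orbit $\tau$. Interchanging differentiation and the (finite) integral defining the period — legal thanks to the $\class^1$ dependence of $f_t$ in the H\"older norm together with the smoothness of $t\mapsto h(f_t)$ (itself obtained from the analyticity of the pressure function, Proposition~\ref{pderiv}(i), via the implicit function theorem applied to $P(-h\,f_t)=0$) — one computes
\[
\ell_{\dot F_0}(\tau)=\left.\frac{\partial}{\partial t}\right|_{t=0}\ell_{F_t}(\tau)=-\left.\frac{\partial}{\partial t}\right|_{t=0}\Big(h(f_t)\,\ell_\tau(f_t)\Big),
\]
since $\ell_{F_t}(\tau)=-h(f_t)\,\ell_\tau(f_t)$. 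Chaining the three equivalences yields the claimed statement.

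Substantively, the proposition is a clean translation: the pressure seminorm measures how far $\dot F_0$ is from being a Liv\v sic coboundary, and Liv\v sic's theorem converts coboundary-ness into a vanishing condition for all orbit integrals — which, after unpacking $F_t=-h(f_t)f_t$, is precisely the first variation of the (signed) $F$-periods of closed orbits. The only mildly technical point is the regularity of $h(f_t)$ in $t$, which is standard from analyticity of pressure; beyond that, every step is a dictionary entry between thermodynamic formalism and orbit-length data.
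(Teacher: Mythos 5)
Your argument is correct and is exactly the standard deduction behind the cited result: the paper itself gives no proof (it quotes \cite[Lemma 9.3]{pressure}), but the chain you assemble --- Theorem~\ref{HessJ} to reduce to $\PP(\dot F_0)$, positivity of the denominator $-\int F_0\,dm_{F_0}$, Proposition~\ref{pderiv}(iv) to pass to a Liv\v sic coboundary, and Liv\v sic's theorem to pass to periods $\ell_{\dot F_0}(\tau)=-\left.\tfrac{\partial}{\partial t}\right|_{t=0}\bigl(h(f_t)\ell_\tau(f_t)\bigr)$ --- is precisely the intended one, and all the needed ingredients are already stated in Section~\ref{s.3}. Your care about $\int\dot F_0\,dm_{F_0}=0$ and the differentiability of $t\mapsto h(f_t)$ covers the only genuinely delicate points.
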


\section{Pressure forms}\label{b2} 
Now we will apply the thermodynamic formalism to representations. For this we make use of the interpretation of a $\Theta$-Anosov representation as a reparametrization of the geodesic flow as explained in Section~\ref{s.rep}. 

Given any functional $\varphi\in \sf E_\Theta^*$ that is positive on the limit cone, one can associate a reparametrization $f^\varphi_\rho$ of the geodesic flow on $\G$. Here we describe in detail two special cases of this construction which play an important role in the paper

\subsection{Spectral radius pressure form}
Let $\rho,\eta$ be two projective Anosov representations (with possibly different target groups). They both give rise to reparametrizations of the geodesic flow $f_\rho^{\omega_1}$ and $f_\eta^{\omega_1}$, where $\omega_1$ is the first fundamental weight. 

We define the  \emph{spectral radius dynamical intersection} of the two projective-Anosov representations $\rho,\eta$ to be the dynamical intersection between  $f_\rho^{\omega_1}$ and $f_\eta^{\omega_1}$: 
$$\II^{\omega_1}(\rho,\eta)=\II(f_\rho^{\omega_1},f_\eta^{\omega_1}).$$ 
Analogously we define  $\JJ^{\omega_1}(\rho,\eta).$ 
Moreover, given a $\class^1$ curve $(\rho_t)_{t\in(-\eps,\eps)}$ of projective Anosov representations the \emph{spectral radius pressure norm} of $\dot\rho_0$ is defined by 
$$\PP^{\omega_1}_\rho(\dot\rho_0)=\left.\frac{\partial^2}{\partial t^2}\right|_{t=0}\JJ^{\omega_1}(\rho_0,\rho_t)\geq 0.$$

The spectral radius pressure norm induces a positive semidefinite symmetric bilinear two form at the smooth points of $\{\sroot_1\}$-Anosov representations. However positive semi-definiteness is as far as thermodynamics goes, and one needs geometric arguments to establish non-degeneracy. 
In \cite{pressure} B.-Canary-Labourie-S. prove non-degeneracy under some mild assumptions, giving 

\begin{thm}[{B.-Canary-Labourie-S. \cite[Theorem 1.4]{pressure}}]\label{t.pressure1} Let $\G$ be word hyperbolic, and $\sf G_\R<\PGL_d(\R)$ be reductive. The spectral radius pressure form is an analytic Riemannian metric on the space $\cal C_g(\G,\sf G_\R)$ of conjugacy classes of $\sf G_\R$-generic, regular, irreducible, projective Anosov representations.
 \end{thm}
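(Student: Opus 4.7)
The plan is to verify analyticity, positive semi-definiteness, and non-degeneracy of $\PP^{\omega_1}$ in sequence; only the last is substantive.

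For analyticity I would combine three ingredients: Proposition~\ref{spectralPotential} says that $u\mapsto f_{\rho_u}^{\omega_1}$ is analytic along analytic families of projective Anosov representations; Proposition~\ref{pderiv}(i) gives analyticity of the pressure function on $\holder(\UG,\R)$; and the identification of its first derivative with pairing against the equilibrium state then yields analytic dependence of the latter on the potential. It follows that the entropy $h^{\omega_1}$, the dynamical intersection $\II^{\omega_1}$ from \eqref{II}, its renormalization $\JJ^{\omega_1}$, and its second variation $\PP^{\omega_1}$ are analytic on $\cal C_g(\G,\sf G_\R)$. Positive semi-definiteness is then immediate from Theorem~\ref{HessJ} applied to $f_t:=f_{\rho_t}^{\omega_1}$ and $F_t:=-h(f_t)f_t$, which gives $\PP^{\omega_1}_\rho(\dot\rho_0)=\PP(\dot F_0)\ge 0$.

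The crux is non-degeneracy. Suppose $\PP^{\omega_1}_\rho(\dot\rho_0)=0$ along a $\class^1$ curve $(\rho_t)$. By Proposition~\ref{0}, for every conjugacy class $[\g]\in[\G]$,
$$\left.\frac{d}{dt}\right|_{t=0}\!\Big(h^{\omega_1}(\rho_t)\,\omega_1(\lambda(\rho_t(\g)))\Big)=0;$$
setting $\alpha:=\dot h^{\omega_1}(\rho_0)/h^{\omega_1}(\rho_0)$, this rewrites as
$$\left.\frac{d}{dt}\right|_{t=0}\omega_1\big(\lambda(\rho_t(\g))\big)=-\alpha\,\omega_1\big(\lambda(\rho(\g))\big)\qquad\text{for all }\g\in\G.$$
I would then invoke an infinitesimal spectral rigidity statement: for a $\sf G_\R$-generic, regular, irreducible projective Anosov representation, the map $[\eta]\mapsto\{\omega_1(\lambda(\eta(\g)))\}_{[\g]}$ is an infinitesimal immersion on $\cal C_g(\G,\sf G_\R)$ at $[\rho]$ modulo a one-dimensional global scaling symmetry which cannot be realized by a deformation inside $\frak X(\G,\sf G_\R)$ because $\sf G_\R$ is semisimple. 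Consequently the proportional infinitesimal variation displayed above forces $\dot\rho_0=0$ in $\sf T_{[\rho]}\cal C_g$.

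The hard part is the rigidity step. One natural approach is to exploit the analyticity and transversality of the boundary maps $\xi^1_\rho$ together with irreducibility: the cross-ratio data on the limit set encoded by $\omega_1\circ\lambda$ determines the projective character of $\rho$ up to finite ambiguity, the generic hypothesis being designed precisely to exclude accidental spectral coincidences. A more dynamical route uses Proposition~\ref{pderiv}(iv): vanishing of $\PP(\dot F_0)$ forces $\dot F_0$ to be Liv\v sic cohomologous to zero, and Liv\v sic triviality of the infinitesimal spectral reparametrization can then be converted, under the generic hypothesis, into tangency of $\dot\rho_0$ to the conjugation orbit of $\rho$.
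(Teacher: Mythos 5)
This theorem is not proved in the paper at all: it is imported verbatim from B.--Canary--Labourie--S.\ \cite[Theorem 1.4]{pressure}, so there is no internal proof to compare against; I can only judge your proposal against the argument in that reference. Your treatment of analyticity and positive semi-definiteness is correct and matches the standard route (analytic dependence of $f^{\omega_1}_{\rho_u}$ from Proposition \ref{spectralPotential}, analyticity of pressure and of equilibrium states, Theorem \ref{HessJ} for $\PP^{\omega_1}=\PP(\dot F_0)\ge 0$), and your reduction of non-degeneracy via Proposition \ref{0} to the condition $d\,\omega_1\big(\lambda(\rho_t(\g))\big)/dt\big|_{t=0}=-\alpha\,\omega_1\big(\lambda(\rho(\g))\big)$ for all $\g$ is exactly the first step taken in \cite{pressure}.

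However, the proof has a genuine gap precisely where you flag ``the hard part.'' The entire substance of the cited theorem is the infinitesimal spectral rigidity statement you invoke: that at a $\sf G_\R$-generic, regular, irreducible projective Anosov point the differentials of the functions $\rho\mapsto\omega_1\big(\lambda(\rho(\g))\big)$ span the cotangent space of $\cal C_g(\G,\sf G_\R)$ (this is \cite[Proposition 10.1]{pressure}, and it is where the genericity and irreducibility hypotheses actually enter, via an analysis of $\lambda_1$ near a proximal element whose centralizer in the Zariski closure is a maximal torus, together with products $\g_0^n\eta$ to generate the whole space), \emph{and} that the residual one-dimensional scaling direction $d\,\omega_1^\g(v)=-\alpha\,\omega_1^\g(\rho)$ with $\alpha\neq 0$ cannot occur. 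Neither is established by your sketch: the cross-ratio/``finite ambiguity'' route is not an infinitesimal statement, the appeal to Proposition \ref{pderiv}(iv) merely re-derives the conclusion of Proposition \ref{0} rather than providing an independent rigidity mechanism, and ``cannot be realized because $\sf G_\R$ is semisimple'' is an assertion, not an argument --- ruling out the infinitesimal uniform scaling of the marked spectral-radius spectrum requires its own proof in \cite{pressure}. As written, the proposal reduces the theorem to an unproved statement that is essentially equivalent in difficulty to the theorem itself.
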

Recall that a representation $\rho:\G\to \sf G_\R$ is $\sf G_\R$-generic if its Zariski closure contains elements whose centralizer is a maximal torus in $\sf G_\R$, and it is regular if it is a smooth point of the algebraic variety $\Hom(\G,\sf G_\R)$.

\subsection{Spectral gap pressure form}
We now consider two $\{\sroot_1,\sroot_2\}$-Anosov representations $\rho,\eta$ (with possibly different target groups). As explained in Section~\ref{s.rep} they define reparametrizations $f_\rho^{\sroot_1}$ and $f_\eta^{\sroot_1}$ of the geodesic flow. 

We define the \emph{spectral gap dynamical intersection} of $\rho$ and $\eta$ to be the dynamical intersection between  $f_\rho^{\sroot_1}$ and $f_\eta^{\sroot_1}$: $$\II^{\sroot_1}(\rho,\eta)=\II(f_\rho^{\sroot_1},f_\eta^{\sroot_1}),$$ and analogously for $\JJ^{\sroot_1}(\rho,\eta).$ 
Given a $\class^1$ curve $(\rho_t)_{t\in(-\eps,\eps)}$ of such $\{\sroot_1,\sroot_2\}$-representations the \emph{spectral gap pressure norm} of $\dot\rho_0$ is defined by $$\PP^{\sroot_1}_\rho(\dot\rho_0)=\left.\frac{\partial^2}{\partial t^2}\right|_{t=0}\JJ^{\sroot_1}(\rho_0,\rho_t)\geq0.$$

The spectral gap pressure norm induces a semidefinite symmetric bilinear two form on smooth points of $\{\sroot_1,\sroot_2\}$-Anosov representations. This looks very similar to the spectral radius pressure norm. It is, however, in general harder to check when the spectral gap pressure form is non-degenerate. As far as the authors know this has, so far, only been established for the Hitchin component in $\PSL_d(\R)$:

\begin{thm}[{B.-Canary-Labourie-S. \cite[Theorem 1.6]{LiouvillePressure}}]\label{gap>0Hitchin} Let $\sf G_\R$ denote either $\PSL_d(\R),$ $\PSp(2n,\R),$ $\PSO(n,n+1)$ or the split form of the exceptional complex Lie group $\sf G_2.$ Then the spectral gap pressure form is positive definite on the Hitchin component $\hitchin(S,\sf G_\R).$\end{thm}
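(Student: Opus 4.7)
The starting point is the reduction via the thermodynamic machinery. Applying Theorem~\ref{HessJ} to the analytic family $f_t=f_{\rho_t}^{\sroot_1}$ produces the identity $\PP^{\sroot_1}_{\rho_0}(\dot\rho_0)=\PP(\dot F_0)$ with $F_t=-h^{\sroot_1}(\rho_t)f_{\rho_t}^{\sroot_1}$, and by Proposition~\ref{0} the kernel of $\PP^{\sroot_1}$ at $\rho_0$ is precisely the space of tangent vectors satisfying
\begin{equation*}
\left.\frac{\partial}{\partial t}\right|_{t=0}\Bigl(h^{\sroot_1}(\rho_t)\,\sroot_1\bigl(\lambda(\rho_t(\gamma))\bigr)\Bigr)=0\quad\text{for every }[\gamma]\in[\G].
\end{equation*}
The theorem is thus reduced to showing that this condition forces $\dot\rho_0=0$ in $\sf T_{\rho_0}\hitchin(S,\sf G_\R)$.

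The first simplification I would use is the hyperconvex input from Section~\ref{s.112}: by Theorem~\ref{h->h} every fundamental representation $\Wedge_\sroot\rho$ of a Hitchin representation is $(1,1,2)$-hyperconvex, so combining Theorems~\ref{t.LC} and~\ref{h=1} (using that $\bord\pi_1 S\cong S^1$) shows that $h^{\sroot}(\rho)\equiv 1$ on the entire Hitchin component, for every simple root. In particular $\partial_t h^{\sroot_1}(\rho_t)|_{t=0}=0$, and the rigidity condition collapses to the unrenormalized \emph{infinitesimal marked $\sroot_1$-length rigidity}: if $\partial_t|_{t=0}\sroot_1(\lambda(\rho_t(\gamma)))=0$ for all $[\gamma]\in[\G]$, then $\dot\rho_0=0$.

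The second step is to establish this infinitesimal length rigidity. My plan is to construct, from the proximal boundary map $\xi^1_{\Wedge_{\sroot_1}\rho}:\bord\pi_1 S\to\P(\R^d)$, a Liouville-type geodesic current $\mathbf L_\rho$ on $\bord\pi_1 S^{(2)}$ using the natural cross-ratio on $\P(\R^d)$ that the hyperconvex structure makes H\"older-regular. Pairing $\mathbf L_\rho$ with the current associated to a conjugacy class $[\gamma]$ should recover $\sroot_1(\lambda(\rho(\gamma)))$. Differentiating this pairing in $\rho$, the vanishing of the derivative on every periodic orbit translates, via the Liv\v sic cohomology characterization recalled in Section~\ref{s.3}, into the statement that the derivative of the reparametrization potential $\dot f_{\rho_t}^{\sroot_1}$ is Liv\v sic-cohomologous to zero on the geodesic flow of $\pi_1S$. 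Combined with the constancy $h^{\sroot_1}\equiv 1$ (which prevents a nontrivial rescaling contribution), this should yield $\dot\rho_0=0$ modulo conjugation.

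The main obstacle, and the key feature distinguishing this statement from Theorem~\ref{t.pressure1} for the spectral radius pressure form, is that the spectral gap pressure form extracts information only through the single linear functional $\sroot_1$ on the Jordan projection rather than through the full Cartan data. Reconstructing the deformation from $\sroot_1$-lengths alone genuinely requires the total positivity structure specific to the Hitchin component --- through Fock--Goncharov positivity of the boundary map, or the principal $\frak{sl}_2$-embedding linking $\sroot_1$-data to the full spectrum on the Fuchsian locus and by analytic continuation to the whole component. This is exactly why the statement is restricted to the split real forms of Hitchin type $\PSL_d(\R)$, $\PSp(2n,\R)$, $\PSO(n,n+1)$ and the split form of $\sf G_2$; for the non-$\PSL_d(\R)$ cases one can furthermore transfer the argument through the Tits embeddings $\Wedge_\sroot$ of Proposition~\ref{TitsReps}, reducing to the projective picture while preserving $\sroot_1$-lengths.
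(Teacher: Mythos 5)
This statement is quoted verbatim from B.-Canary-Labourie-S.\ \cite[Theorem 1.6]{LiouvillePressure}; the present paper offers no proof of it, so there is nothing internal to compare your attempt against. Judged on its own terms, your first step is correct and is indeed how the cited proof begins: Theorem~\ref{HessJ} and Proposition~\ref{0} identify the kernel of $\PP^{\sroot_1}$ with the directions along which $t\mapsto h^{\sroot_1}(\rho_t)\,\sroot_1\big(\lambda(\rho_t(\g))\big)$ is critical for every $[\g]$, and the hyperconvexity results (Theorems~\ref{h->h}, \ref{t.LC}, \ref{h=1}) give $h^{\sroot}\equiv 1$ on the Hitchin component, collapsing the condition to the vanishing of $\partial_t|_{t=0}\,\sroot_1\big(\lambda(\rho_t(\g))\big)$ for all $[\g]$.

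The genuine gap is in your second step, which is where the entire content of the theorem lives. Translating ``the derivative of every $\sroot_1$-period vanishes'' into ``$\dot f^{\sroot_1}_{\rho_t}$ is Liv\v sic cohomologous to zero'' is not progress: by Liv\v sic's theorem these two statements are \emph{equivalent}, so you have merely restated the hypothesis, and no amount of repackaging through a Liouville current changes that. What must actually be proved is infinitesimal marked $\sroot_1$-length rigidity: that a tangent vector $v$ to the Hitchin component annihilating $d\,\sroot_1^{\g}$ for all $\g$ is zero. Your appeal to ``total positivity'' and ``Fock--Goncharov positivity'' names a feature of the Hitchin component but supplies no argument. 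The proof in the cited reference closes this gap by a concrete reduction: using asymptotics of $\sroot_1\big(\lambda(\rho(\g^n\eta))\big)$ as $n\to\infty$ (together with irreducibility and genericity of the exterior power representations of a Hitchin representation), one shows that vanishing of the derivative of the $\sroot_1$-spectrum forces vanishing of the derivative of the $\omega_1$-spectrum, at which point the already-established non-degeneracy of the spectral radius pressure form (the analogue of Theorem~\ref{t.pressure1}) applies. Without that reduction, or a genuine proof that your current/cross-ratio map is an immersion, the argument does not go through. A smaller point: the extension from $\PSL_d(\R)$ to $\PSp(2n,\R)$, $\PSO(n,n+1)$ and $\sf G_2$ requires checking that the Tits embedding carries the first simple root of the subgroup to $\sroot_1$ of the ambient $\PSL_d(\R)$ and that the tangent space injects; you assert this but should verify it.
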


\subsection{Vanishing directions}

Complex conjugation of matrices is an external automorphism of $\PSL_d(\C)$ and thus induces an involution 
$$\conj:\frak X\big(\G,\PSL_d(\C)\big)\to\frak X\big(\G,\PSL_d(\C)\big)$$
 whose fixed point set contains $\frak X\big(\G,\PSL_d(\R)\big).$ If $\rho\in\frak X\big(\G,\PSL_d(\R)\big)$ is a regular point, then the differential $d_\rho\conj$ splits the tangent space as a sum of \emph{purely imaginary vectors} and the tangent space to the real characters: $${\sf{T}}_\rho\frak X\big(\G,\PSL_d(\C)\big)=\Bending\oplus{\sf{T}}_\rho\frak X\big(\G,\PSL_d(\R));$$ 
the almost complex structure $J$ of $\frak X\big(\G,\PSL_d(\C)\big)$ interchanges this splitting. 

With a standard symmetry argument (see for example B.-Canary-S. \cite[Section 5.8]{pressuresurvey}), we get:
\begin{lemma}\label{degenerate} Let $\rho:\G\to\PSL_d(\R)$ be  $\{\sroot_1\}$-Anosov and let $v$ be a purely imaginary direction at $\rho.$ Then $\PP^{\omega_1}(v)=0.$ If $\rho$ is moreover $\{\sroot_2\}$-Anosov, then $\PP^{\sroot_1}(v)=0.$
\end{lemma}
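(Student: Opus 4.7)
The plan is to exploit Theorem~\ref{HessJ}, which gives $\PP^{\varphi}(v)=\PP(\dot F_0)$ with $f_t=f_{\rho_t}^{\varphi}$ (where $\varphi=\omega_1$ in the first statement and $\varphi=\sroot_1$ in the second) and $F_t=-h(f_t)\,f_t$. Since the pressure semi-norm $\PP$ vanishes precisely on Liv\v sic coboundaries (Proposition~\ref{pderiv}), it suffices to show that $\dot F_0$ has vanishing $\g$-period for every conjugacy class $\g\in[\G]$.

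First I would reduce this to the single vanishing statement $\ell_\g(\dot f_0)=\left.\frac{d}{dt}\right|_{t=0}\varphi\big(\lambda(\rho_t(\g))\big)=0$ for all $\g$. Once this is established, $\dot f_0$ is itself a Liv\v sic coboundary by Liv\v sic's theorem, so $\int\dot f_0\,dm_{F_0}=0$; Proposition~\ref{JCritical} together with the intersection formula~\eqref{II} then yield
\[
\frac{\dot h}{h}=\left.\frac{d}{dt}\right|_{t=0}\log h(f_t)=\left.\frac{d}{dt}\right|_{t=0}\II(f_0,f_t)=\frac{\int\dot f_0\,dm_{F_0}}{\int f_0\,dm_{F_0}}=0.
\]
Since $\dot F_0=-\dot h\,f_0-h\,\dot f_0$ with both $\dot h=0$ and $[\dot f_0]=0$, we conclude $[\dot F_0]=0$, as needed.

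The core of the argument is the pluriharmonicity of periods. Since $v$ is purely imaginary, write $v=Ju$ with $u\in\sf T_\rho\frak X(\G,\PSL_d(\R))$, choose a real-analytic curve $\sigma_s$ in $\frak X(\G,\PSL_d(\R))$ with $\dot\sigma_0=u$, and extend it to a holomorphic family $\sigma_z:\G\to\PSL_d(\C)$ for $z$ in a complex neighborhood of $0$. Then $\sigma_{it}$ has initial velocity $iu=Ju=v$, so one may replace $\rho_t$ by $\sigma_{it}$ without affecting $\PP^{\varphi}(v)$. The $\{\sroot_1\}$-Anosov (resp.\ $\{\sroot_1,\sroot_2\}$-Anosov) hypothesis ensures that for each $\g$ the top eigenvalue $\lambda_1(\sigma_z(\g))$ (resp.\ both $\lambda_1$ and $\lambda_2$) is simple and thus varies holomorphically in $z$ near $0$; moreover the strict spectral gap forces these eigenvalues to remain real on the real axis, as a non-real complex-conjugate pair would contradict the gap in absolute value. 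Choosing a consistent branch, $L_\g(z):=\varphi\big(\lambda(\sigma_z(\g))\big)$ extends to a holomorphic function whose real part equals $\ell_\g(f^{\varphi}_{\sigma_z})$ and which is real on the real axis, so $L_\g'(0)\in\R$. Writing $z=x+iy$, we obtain
\[
\ell_\g(\dot f_0)=\left.\frac{\partial}{\partial y}\right|_{z=0}\mathrm{Re}\,L_\g(z)=\mathrm{Re}\bigl(iL_\g'(0)\bigr)=-\mathrm{Im}\,L_\g'(0)=0.
\]

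The main obstacle lies in establishing this holomorphic dependence of the relevant eigenvalues together with their reality along the real axis; both follow from the Anosov spectral gap, which is precisely where the hypotheses $\{\sroot_1\}$-Anosov for $\omega_1$ and additionally $\{\sroot_2\}$-Anosov for $\sroot_1$ enter. Everything else reduces to standard manipulations in the thermodynamic formalism.
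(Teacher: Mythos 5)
Your proof is correct, but it takes a genuinely different route from the paper's. Both arguments ultimately reduce the lemma to the same intermediate fact: for every conjugacy class $[\g]$ the first derivatives at $t=0$ of $t\mapsto\ell_\g(f^{\sroot_1}_{\rho_t})$ and of $t\mapsto h(f^{\sroot_1}_{\rho_t})$ vanish. From there the paper quotes Proposition \ref{0} directly, while you go through Theorem \ref{HessJ} and Liv\v sic's theorem to see that $\dot F_0$ is a coboundary --- equivalent, though you could have shortened your own ending by invoking Proposition \ref{0} once $\dot h=0$ and $\ell_\g(\dot f_0)=0$ are known. The real divergence is in how the vanishing of these first derivatives is obtained. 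The paper chooses the curve to be conjugation-symmetric, $\conj\rho_t=\rho_{-t}$; since complex conjugation preserves the moduli of eigenvalues, $\ell_\g(f^{\sroot_1}_{\rho_t})=(\lambda_1-\lambda_2)\big(\rho_t(\g)\big)$ and the entropy are even functions of $t$, hence critical at $0$, with no perturbation theory of eigenvalues required. You instead complexify a real-analytic curve to a holomorphic family and argue that each period is the real part of a holomorphic function which is real on the real axis, so its derivative in the imaginary direction vanishes by Cauchy--Riemann; this is precisely the mechanism behind Proposition \ref{pluriharmonic}, so your argument is a first-order instance of the pluriharmonicity exploited later in the paper. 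The cost is the (correct, but not free) verification that the relevant eigenvalues are simple, depend holomorphically on the parameter, and stay real along the real locus --- which is exactly where you correctly locate the use of the $\{\sroot_1\}$- resp.\ $\{\sroot_1,\sroot_2\}$-Anosov hypotheses --- whereas the symmetry argument gets evenness for free. Both approaches implicitly use that $\rho$ is a smooth point (to split the tangent space and, in your case, to complexify the curve), so neither is more general; the paper's is shorter, yours makes the connection with pluriharmonicity explicit.
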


\begin{proof} Let us prove on the second statement, the first one being analogous. Consider a differentiable curve $(\rho_t)_{t\in(-\eps,\eps)}\subset\Anosov_{\{\sroot_1,\sroot_2\}}(\G,\PSL_d(\C))$ such that $\rho_0=\rho,$ $\dot\rho_0=v$ and $\tau\rho_t=\rho_{-t}.$ For every conjugacy class $[\g]\in[\G],$ the functions $$t\mapsto\ell_\g(f^{\sroot_1}_{\rho_t})=(\lambda_1-\lambda_2)\big(\rho_t(\g)\big)\quad \text{ and }\quad t\mapsto h(f^{\sroot_1}_{\rho_t})$$   are invariant under $t\mapsto -t$ and are thus critical at $0.$ Consequently, for every conjugacy class, the function $t\mapsto h(f^{\sroot_1}_{\rho_t})\ell_\g(f^{\sroot_1}_{\rho_t})$ is critical at $0$ and hence Proposition \ref{0} implies that $\PP^{\sroot_1}(v)=0.$\end{proof}

\section{Pluriharmonicity of length functions and its consequences}\label{s.proofs}
In this section we prove the main results stated in the Introduction. 
\subsection{Pluriharmonic length functions} If $\rho,\eta\in\Anosov_{\Theta}(\G,\sf G_\C)$ and $\varphi\in (\sf E_\Theta)^*$ is strictly positive on $\big(\Lambda_\rho\cup\Lambda_\eta\big)-\{0\},$ then one can define their $\varphi$-\emph{dynamical intersection} by \begin{equation}\label{phi-inter}\II^\varphi(\rho,\eta)=\II(f^\varphi_\rho,f^\varphi_\eta)=\lim_{T\to\infty} \frac{1}{\#R_T(f^\varphi_\rho)}\sum_{[\g]\in R_T(f^\varphi_\rho)} \frac{\varphi\Big(\lambda\big(\eta(\g)\big)\Big)}{\varphi\Big(\lambda\big(\rho(\g)\big)\Big)} ,\end{equation} where $f^\varphi_\rho=\varphi(f^\Theta_\rho)$ is given by Corollary \ref{potentialC}.

Recall that a function is \emph{pluriharmonic} if it is locally the real part of a holomorphic function. The argument from B.-Taylor \cite[Section 5]{WP-QF} applies directly and one has the following result. 

\begin{prop}\label{pluriharmonic}Consider $\rho\in\Anosov_{\Theta}(\G,\sf G_\C)$ and $\varphi\in (\sf E_\Theta)^*$ that is strictly positive in $\Lambda_\rho-\{0\}.$ Then the function $$\II^{\varphi}_\rho=\II^\varphi(\rho,\cdot):\Anosov_{\Theta}(\G,\sf G_\C)\to\R$$ is pluriharmonic (when defined). \end{prop}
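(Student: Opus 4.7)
The plan is to reduce the pluriharmonicity of $\II^\varphi_\rho$ to the pluriharmonicity of the individual period functions $\eta\mapsto\varphi(\lambda(\eta(\gamma)))$, which in turn is a consequence of the standard holomorphic dependence of simple eigenvalues on parameters. First I would use the equilibrium-state formula on the right-hand side of (\ref{II}) to write
$$\II^\varphi(\rho,\eta)=\frac{1}{C_\rho}\int f_\eta^\varphi\,dm_\rho,\qquad C_\rho:=\int f_\rho^\varphi\,dm_\rho\neq 0,$$
where $m_\rho:=m_{-h^\varphi(\rho)f^\varphi_\rho}$ is the equilibrium state determined by $\rho$ alone. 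Thus the claim reduces to the pluriharmonicity of the linear functional $\eta\mapsto\int f_\eta^\varphi\,dm_\rho$. Since $m_\rho$ is $\phi$-invariant, this integral factors through the Liv\v sic cohomology class $[f_\eta^\varphi]$, and by Liv\v sic's theorem that class is entirely encoded by the collection of periods $\{\ell_\gamma(f_\eta^\varphi)\}_{[\gamma]\in[\G]}=\{\varphi(\lambda(\eta(\gamma)))\}_{[\gamma]\in[\G]}$.

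Next I would establish pluriharmonicity of each period. Writing $\varphi=\sum_{\sroot\in\Theta}c_\sroot\omega_\sroot$ with $c_\sroot\in\R$, it suffices to show that for every $\sroot\in\Theta$ and every $\gamma\in\G$ the function $\eta\mapsto\omega_\sroot(\lambda(\eta(\gamma)))$ is pluriharmonic on a neighborhood of $\rho$ in $\Anosov_\Theta(\G,\sf G_\C)$. By Proposition \ref{TitsReps} there is an irreducible proximal representation $\Wedge_\sroot:\sf G_\C\to\PGL_{d_\sroot}(\C)$ with highest restricted weight a positive integer multiple of $\omega_\sroot$, so that $\omega_\sroot(\lambda(\eta(\gamma)))$ is, up to a positive rational factor, $\log|\mu_1(\Wedge_\sroot\eta(\gamma))|$, the log modulus of the leading eigenvalue. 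Because $\rho$ is $\Theta$-Anosov, $\Wedge_\sroot\rho$ is projective Anosov, hence $\mu_1(\Wedge_\sroot\rho(\gamma))$ is a simple isolated root of the characteristic polynomial of $\Wedge_\sroot\rho(\gamma)$. The holomorphic implicit function theorem then supplies a neighborhood of $\rho$ on which $\mu_1(\Wedge_\sroot\eta(\gamma))$ is a non-vanishing holomorphic function of $\eta$; consequently $\log|\mu_1(\Wedge_\sroot\eta(\gamma))|=\Re\log\mu_1(\Wedge_\sroot\eta(\gamma))$ is pluriharmonic, and so is $\eta\mapsto\varphi(\lambda(\eta(\gamma)))$.

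To close the argument I would directly mimic the mechanism of B.-Taylor \cite[Section 5]{WP-QF}: the limit presentation (\ref{phi-inter}) exhibits $\II^\varphi(\rho,\eta)$ as a pointwise limit of finite, positive, real-linear combinations of the period functions from the previous paragraph, each of which is pluriharmonic in $\eta$. Hence every partial sum is pluriharmonic, and it remains to verify that the convergence is locally uniform in $\eta$; once that is in hand, pluriharmonicity is preserved in the limit by the mean-value characterization. The locally uniform convergence follows from the analyticity of $\eta\mapsto f_\eta^\varphi$ supplied by Corollary \ref{potentialC} combined with the standard spectral-gap analysis of the Ruelle transfer operator associated to $-h^\varphi(\rho)f_\rho^\varphi$, which yields analytic (hence locally uniform) dependence of $\int f_\eta^\varphi\,dm_\rho$ on $\eta$.

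The main obstacle is precisely this last step: upgrading the a priori pointwise pluriharmonicity of periods to pluriharmonicity of the integrated intersection. All the other ingredients (the integral formula, the Tits reduction, and the simple-eigenvalue holomorphy) are either cited or routine; the nontrivial content is checking that the convergence in (\ref{phi-inter}) is locally uniform in the complex parameter $\eta$, or equivalently that one can produce a local holomorphic lift of $\eta\mapsto[f_\eta^\varphi]$ in the complexified space of Liv\v sic cohomology classes. This is exactly the technical core of the B.-Taylor argument, which the proposition is invoking.
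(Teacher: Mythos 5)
Your overall strategy is the same as the paper's: each truncated sum in \eqref{phi-inter} is a finite positive linear combination (with coefficients $1/\big(\#R_T(f^\varphi_\rho)\,\varphi(\lambda(\rho(\g)))\big)$ depending only on $\rho$) of the period functions $\eta\mapsto\varphi\big(\lambda(\eta(\g))\big)$, each of which is pluriharmonic because, via Proposition \ref{TitsReps}, it is up to scale $\log|\mu_1(\Wedge_\sroot\eta(\g))|=\Re\log\mu_1(\Wedge_\sroot\eta(\g))$ for a simple leading eigenvalue depending holomorphically on $\eta$; one then passes pluriharmonicity to the limit using locally uniform convergence (the paper invokes Theorem 1.23 of Axler--Bourdon--Ramey \cite{harmonic} at exactly this point). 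Your first paragraph (the equilibrium-state formula and the Liv\v sic reduction) is a detour that plays no role in the argument you actually run and can be dropped.

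The genuine defect is in the step you yourself flag as the technical core. Knowing that $\eta\mapsto\int f^\varphi_\eta\,dm_\rho$ depends analytically on $\eta$ (via transfer operators) tells you that the \emph{limit} function is analytic; it says nothing about the mode of convergence of the particular sequence of truncated sums in \eqref{phi-inter} to that limit, and an analytic function need not be pluriharmonic, so this route cannot close the argument. What is actually needed, and what the paper uses, is that the Anosov constants $(c,\mu)$ of Definition \ref{defAnosov} can be chosen uniformly on a neighborhood of any $\eta$ in the domain of definition (Bochi--Potrie--S. \cite[Proposition 5.11]{BPS}, together with the openness of the positivity of $\varphi$ on the limit cone). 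This uniform control of $\varphi\big(\lambda(\psi(\g))\big)$ in terms of $|\g|$, simultaneously for all $\psi$ near $\eta$, is what makes the convergence in \eqref{phi-inter} uniform on compact subsets; only then does the limit of real parts of holomorphic functions remain pluriharmonic. So the gap is not that your claim is false, but that the justification you offer for locally uniform convergence does not establish it; replace the transfer-operator appeal by the locally uniform Anosov estimates.
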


Recall from Potrie-S. \cite[Corollary 4.9]{exponentecritico} that the map $\eta\mapsto\P\big(p_\Theta(\Lambda_\eta)\big)$ is continuous on $\frak X_\Theta(\G,\sf G_\K),$ when considering the Hausdorff topology on compact subsets of $\P\big((\sf E_\Theta)^*\big).$ Thus the domain of definition of $\II^\varphi_\rho$ is an open subset of $\Anosov_\Theta(\G,\sf G_\C)$ that contains, in particular, $\rho.$ The proposition implies then that $\II^\varphi_\rho$ is (defined and) pluriharmonic on a neighborhood of $\rho.$

\begin{proof} Consider $\eta\in\Anosov_\Theta(\G,\sf G_\C)$ such that $\varphi|\Lambda_\eta-\{0\}$ is striclty positive. It follows then from Bochi-Potrie-S. \cite[Proposition 5.11]{BPS} that there exists a neighborhood $\cal U$ of $\eta$ such that the constants in Definition \ref{defAnosov} hold for every $\psi\in \cal U.$ This implies that the convergence in the definition of $\II^{\varphi}(\rho,\cdot)$ is uniform on compact subsets of its domain of definition. For each $T>0,$ the truncated sum in equation (\ref{phi-inter})  is the real part of a holomorphic function and thus Theorem 1.23 from Axler-Bourdon-Ramey \cite{harmonic} yields the result.\end{proof}

\subsection{Proof of Theorem \ref{tA}} 
Let $\rho\in\frak X\big(\pi_1S,\PSL_d(\R)\big)$ be  $(1,1,2)$-hyperconvex and assume that it is a regular point of the character variety $\frak X\big(\pi_1S,\PSL_d(\R)\big)$. 
Consider a tangent vector $v\in{\sf{T}}_\rho\frak X\big(\pi_1S,\PSL_d(\R)\big).$ Note that then $Jv$ is a purely imaginary tangent direction in ${\sf{T}}_\rho\frak X\big(\pi_1S,\PSL_d(\C)\big)$. Thus, Lemma \ref{degenerate} implies that for any $\class^1$ curve $(\rho_t)_{t\in(-\eps,\eps)}$ with $\rho_0=\rho,$ $\dot\rho_0=Jv$ and $\tau\rho_t=\rho_{-t}$ we have 
\begin{equation}\label{P=0}
0=\PP^{\sroot_1}(Jv)=\left.\frac{\partial^2}{\partial t^2}\right|_{t=0}\JJ^{\sroot_1}(\rho_0,\rho_t).
\end{equation} 

 Recall that $\II(f,f)=1$ and that if $\rho$ is $(1,1,2)$-hyperconvex then Theorem \ref{h=1} states that $h^{\sroot_1}_\rho=1$. Moreover, as observed in the proof of Lemma \ref{degenerate}, $\dot h^{\sroot_1}(\dot\rho_0)=0,$ so developing the last term of equation (\ref{P=0}) one obtains 
$$0=\Hess_\rho(h^{\sroot_1})(Jv)+\Hess_\rho\II^{\sroot_1}_\rho(Jv).$$ 
Proposition \ref{pluriharmonic} states that $\II^{\sroot_1}_\rho$ is pluriharmonic, so $\Hess_\rho\II^{\sroot_1}_\rho(Jv)=-\Hess_\rho\II^{\sroot_1}_\rho(v)$ and thus $$\Hess_\rho h^{\sroot_1}(Jv)=\Hess_\rho\II^{\sroot_1}_\rho(v).$$ 

\noindent Lemma \ref{FieldExt} implies that, at least for small $t,$ $\rho_t$ is $(1,1,2)$-hyperconvex (over $\C$) and thus Theorem \ref{t.LC} yields $h^{\sroot_1}(\rho_t)=\Hff_{\sroot_1}(\rho_t).$ Finally, since $h^{\sroot_1}\equiv1$ in a neighborhood of $\rho$ in $\frak X\big(\pi_1S,\PSL_d(\R)\big)$ one has $$\Hess_\rho\II^{\sroot_1}_\rho(v)=\PP^{\sroot_1}(v).$$ The result  follows.

\subsection{Proof of Theorem \ref{tB}}\label{proofTB} By Theorem \ref{t.LC} $\Hff_1=h^{\sroot_1}$ in a neighborhood of $\rho,$ and thus by assumption, the latter is critical at $\rho.$ Since $\JJ^{\sroot_1}(\rho,\cdot)$ is also critical at $\rho$ (Proposition \ref{JCritical}) one concludes that $\II^{\sroot_1}_\rho$ is critical at $\rho$ and thus its Hessian is well defined.

By Proposition \ref{pluriharmonic} $\II^{\sroot_1}_\rho$ is pluriharmonic and thus one has (as before) that for every $v\in\sf T_\rho\frak X\big(\G,\PSL_d(\C)\big)$ $$\Hess_\rho\II^{\sroot_1}_\rho(Jv)=-\Hess_\rho\II^{\sroot_1}_\rho(v).$$ 
One concludes that the $(+,0,-)$ signature of $\Hess_\rho\II^{\sroot_1}_\rho$ is of the form $(p,2k,p)$ for some $p\leq$ half $\dim_\R\frak X\big(\G,\PSL_d(\C)\big).$ Moreover, by Theorem \ref{HessJ} one has 
$$0\leq\PP^{\sroot_1}(Jv)=\Hess_\rho h^{\sroot_1}(Jv)-h^{\sroot_1}_\rho\Hess_\rho\II^{\sroot_1}_\rho(v),$$
 so that  $\Hess_\rho\II^{\sroot_1}_\rho(v)\geq0$ implies  $\Hess_\rho h^{\sroot_1}(Jv)\geq0.$ In particular $\Hess_\rho h^{\sroot_1}$ is positive semidefinite on a subspace of dimension at least $$\dim_\R\frak X\big(\G,\PSL_d(\C)\big)-p\geq \frac12 \dim_\R\frak X\big(\G,\PSL_d(\C)\big)$$ and the theorem is proven.

\subsection{Proof of Theorem \ref{thmComplex}}\label{prooftC}

Let $\G$ be a co-compact lattice in $\PSO(n,1)$ such that the inclusion $\iota:\G\to\PSO(n,1)$ defines, after extending coefficients, a regular point of the character variety $\frak X \big(\G,\PSU(n,1)\big)$. Theorem 2.2 (and the Remark following it) in Cooper-Long-Thistlethwaite \cite{CLT} assert that $\iota$ is then a regular point of the $\PSL_{n+1}(\R)$ character variety $\frak X\big(\G,\PSL_{n+1}(\R)\big).$ 

Moreover, since $\so(n,1)$ is the fixed point set of an involution in $\sl_{n+1}(\R),$ one has the decomposition $\sl_{n+1}(\R)=\so(n,1)\oplus\s$ with $[\s,\s]\subset\so(n,1).$ One readily sees that 
\begin{equation}\label{u}\su(n,1)=\so(n,1)\oplus i\s\subset\sl_{n+1}(\C).\end{equation} 
The twisted cohomology $H^1_\iota\big(\G,\sl_{n+1}(\R)\big)$ splits as 
$$H^1_\iota\big(\G,\sl_{n+1}(\R)\big)=H^1_\iota\big(\G,\so(n,1)\big)\oplus H^1_\iota(\G,\s).$$ 
Consequently, by equation (\ref{u}) the subspace $H^1_\iota(\G,\s)\subset H^1_\iota(\G,\sl_{n+1}(\C))$ is sent bijectively to $H^1_\iota(\G,i\s)$ when multiplied by the complex structure $J,$ i.e. 
\begin{equation}\label{j=i}J\cdot H^1_\iota(\G,\s)=H^1_\iota(\G,i\s).\end{equation}

We will need the following generalization of Crampon \cite{crampon}.

\begin{thm}[{Potrie-S. \cite[Theorem 7.2]{exponentecritico}}]\label{h<} Assume $\rho\in\frak X\big(\G,\PSL_{n+1}(\R)\big)$ has finite kernel and divides a proper open convex set of $\P(\R^{n+1}).$ Then the entropy $$h^{\omega_1}(\rho)\leq n-1$$ and equality holds only if $\rho$ has values in $\PSO(n,1).$
\end{thm}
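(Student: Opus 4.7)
The plan is to bound $h^{\omega_1}(\rho)$ by the volume entropy of the Hilbert metric on $\Omega$ and then invoke Crampon's entropy rigidity theorem \cite{crampon}, extended from the strictly convex case via Benoist's structure theory of divisible convex sets.

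First I set up the spectral picture. A representation $\rho$ dividing a proper open convex $\Omega\subset\P(\R^{n+1})$ is projective Anosov: every hyperbolic element $\rho(\g)$ is bi-proximal, with attracting and repelling fixed points in $\bord\Omega$, so its Jordan projection satisfies $\lambda_1>\lambda_2$ and $\lambda_n>\lambda_{n+1}$. The Hilbert translation length of $\rho(\g)$ equals $\ell_H(\rho(\g))=\tfrac{1}{2}(\lambda_1-\lambda_{n+1})$, while $\omega_1(\lambda(\rho(\g)))=\lambda_1$. Via Corollary \ref{potentialC}, both linear functionals define H\"older reparametrizations of the geodesic flow on $\UG$ whose topological entropies are $h^{\omega_1}(\rho)$ and the Hilbert volume entropy $h_H(\rho)$ respectively.

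The central step is to establish the comparison $h^{\omega_1}(\rho)\leq h_H(\rho)$. I would proceed by a Patterson-Sullivan argument: construct a Patterson-Sullivan density of dimension $h^{\omega_1}$ on $\bord\Omega$ using the Busemann-type cocycle associated to $\omega_1$, compare it with the Hilbert Patterson-Sullivan density of dimension $h_H$, and match the two via shadow lemma estimates. The key geometric input is that Hilbert shadows of large balls in $\Omega$, viewed inside $\bord\Omega$ with the Fubini-Study metric on $\P(\R^{n+1})$, shrink at a rate controlled by the convexity of $\Omega$. A pointwise inequality $\omega_1\geq\ell_H$ is \emph{not} available in general, since the middle eigenvalues of $\rho(\g)$ can have either sign in the non-strictly-hyperbolic case, so a shadow-based comparison is essential rather than a termwise bound.

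To close, I would invoke Crampon's theorem: $h_H\leq n-1$ with equality if and only if $\Omega$ is an ellipsoid. For the non-strictly-convex case I would use Benoist's decomposition of divisible convex sets into irreducible factors to reduce to the strictly convex case by monotonicity of the volume entropy. The main obstacle will be the equality case $h^{\omega_1}(\rho)=n-1$: one must simultaneously force the Patterson-Sullivan comparison to be sharp, making the cocycle defect between $\omega_1$ and $\ell_H$ a coboundary, hence Liv\v sic cohomologous to zero and therefore trivial on every periodic orbit, \emph{and} force $h_H=n-1$, triggering the ellipsoid case of Crampon. Combined, these imply that the middle coordinates of $\lambda(\rho(\g))$ vanish for every $\g\in\G$ and that $\Omega$ is an ellipsoid, so the Zariski closure of $\rho(\G)$ is contained in $\PSO(n,1)$.
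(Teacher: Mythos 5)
This theorem is not proved in the paper at all: it is imported from Potrie--Sambarino \cite[Theorem 7.2]{exponentecritico}, and the paper pointedly introduces it as a \emph{generalization} of Crampon \cite{crampon} --- i.e.\ a statement that implies Crampon's entropy bound, not one deduced from it. That phrasing already flags the fatal gap in your plan. Your central step is the inequality $h^{\omega_1}(\rho)\leq h_H(\rho)$, where $h_H$ is the critical exponent for the Hilbert length $\tfrac12(\lambda_1-\lambda_{n+1})$, and this inequality goes the wrong way. Writing $-\lambda_{n+1}\big(\sigma(\rho(\g))\big)=\lambda_1\big(\sigma(\rho(\g^{-1}))\big)$ and applying Cauchy--Schwarz to the Dirichlet series,
$$\sum_{\g\in\G}e^{-\frac s2(\lambda_1-\lambda_{n+1})\big(\sigma(\rho(\g))\big)}\leq\Big(\sum_{\g\in\G}e^{-s\lambda_1\big(\sigma(\rho(\g))\big)}\Big)^{1/2}\Big(\sum_{\g\in\G}e^{-s\lambda_1\big(\sigma(\rho(\g^{-1}))\big)}\Big)^{1/2},$$
one finds that convergence of the $\omega_1$-series forces convergence of the Hilbert series, hence $h_H(\rho)\leq h^{\omega_1}(\rho)$ for \emph{every} discrete subgroup. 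So Crampon's bound $h_H\leq n-1$ gives no control on $h^{\omega_1}$ from above, and no shadow-lemma bookkeeping can reverse this: when $\rho(\G)$ is Zariski dense the limit cone has nonempty interior (Benoist), the two functionals differ off the hyperplane $\lambda_1+\lambda_{n+1}=0$, and generically $h^{\omega_1}>h_H$ strictly. Your own correct remark that no pointwise inequality $\omega_1\geq\ell_H$ is available is the symptom; the disease is that the inequality you are trying to prove is false. A secondary problem is the reduction of the non-strictly-convex case to the strictly convex one via ``Benoist's decomposition into irreducible factors'': there exist \emph{irreducible} divisible convex sets that are not strictly convex (Benoist's examples in dimension $\geq 4$), for which the Hilbert flow is not Anosov and Crampon's theorem is unavailable.

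The mechanism that actually works avoids the Hilbert metric entirely and compares $h^{\omega_1}$ with the Hausdorff dimension of $\partial\Omega$ for the \emph{Riemannian} metric on $\P(\R^{n+1})$. A Patterson--Sullivan density of dimension $h^{\omega_1}$ for the $\omega_1$-cocycle, together with the shadow lemma, shows via Frostman that $h^{\omega_1}(\rho)\leq\dim_{\Hff}\big(\xi^1(\bord\G),d_{\mathrm{vis}}\big)$ where $d_{\mathrm{vis}}(x,y)$ is comparable to the Euclidean distance from $\xi^1(x)$ to the hyperplane $\xi^{n}(y)$; since $\xi^1(y)\in\xi^{n}(y)$, one has $d_{\mathrm{vis}}\lesssim d_{\mathrm{Eucl}}$ and therefore $h^{\omega_1}(\rho)\leq\dim_{\Hff}\big(\partial\Omega,d_{\mathrm{Eucl}}\big)$. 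The right-hand side equals $n-1$ simply because the boundary of a properly convex open set is a Lipschitz hypersurface. Crampon's inequality is then a \emph{consequence}, via the Cauchy--Schwarz estimate above. The equality case is likewise not the ellipsoid rigidity of Crampon (which would require $h_H=n-1$, something your chain of inequalities cannot produce): one must show that sharpness of the comparison forces the Jordan projections onto the ray $\lambda_2=\dots=\lambda_n=0$, $\lambda_{n+1}=-\lambda_1$, and then identify the Zariski closure as a conjugate of $\PSO(n,1)$ using Benoist's theorem that the limit cone of a Zariski-dense subgroup has nonempty interior in the Weyl chamber of its Zariski closure.
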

This has the following useful consequence.
\begin{cor}\label{P>0}The spectral radius pressure form $\PP^{\omega_1}$ on $\frak X\big(\G,\PGL_{n+1}(\R)\big) $ is non-degenerate at $\iota.$ 
\end{cor}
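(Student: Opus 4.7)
The plan is to argue by contradiction: suppose $v\in\sf T_\iota\frak X\big(\G,\PGL_{n+1}(\R)\big)$ satisfies $\PP^{\omega_1}(v)=0$; I aim to show $v=0$. Fix a smooth curve $(\rho_t)\subset\frak X\big(\G,\PGL_{n+1}(\R)\big)$ with $\rho_0=\iota$ and $\dot\rho_0=v$, and combine Theorem~\ref{h<} with Proposition~\ref{0}.

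First I would show that $h^{\omega_1}$ attains a local maximum at $\iota$ on the real character variety. Since $\iota(\G)$ divides the Klein projective model of $\HH^n$ inside $\P(\R^{n+1})$, Koszul's openness theorem ensures that $\rho_t(\G)$ still divides a nearby properly convex open set for $t$ small. Theorem~\ref{h<} then yields $h^{\omega_1}(\rho_t)\leq n-1=h^{\omega_1}(\iota)$, with equality only if $\rho_t$ has image in $\PSO(n,1)$; consequently the function $t\mapsto h^{\omega_1}(\rho_t)$ is maximized at $t=0$, and $\partial_t|_{t=0}h^{\omega_1}(\rho_t)=0$.

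Next I apply Proposition~\ref{0} to the curve of reparametrizing potentials $\big(f^{\omega_1}_{\rho_t}\big)$: the vanishing $\PP^{\omega_1}(v)=0$ is equivalent to $\partial_t|_{t=0}\big[h^{\omega_1}(\rho_t)\,\lambda_1(\rho_t(\g))\big]=0$ for every $[\g]\in[\G]$. Combined with $\partial_t|_{t=0}h^{\omega_1}=0$ from the previous step, and since $h^{\omega_1}(\iota)=n-1\neq 0$, this forces
\[
\partial_t\big|_{t=0}\lambda_1(\rho_t(\g))=0\qquad\text{for every conjugacy class }[\g]\in[\G];
\]
that is, the infinitesimal variation of the $\lambda_1$-length spectrum in direction $v$ is identically zero.

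Finally I would conclude $v=0$. For $n\geq 3$, Matsushima's vanishing theorem $H^1\big(\G,\mathrm{Ad}\iota|_{\so(n,1)}\big)=0$, combined with the decomposition $\sl_{n+1}(\R)=\so(n,1)\oplus\s$ recalled just above, identifies $v$ canonically with a class in $H^1(\G,\s)$, i.e., an infinitesimal convex projective deformation of the hyperbolic lattice. The principal obstacle is then to show that on $H^1(\G,\s)$ the differential of the $\lambda_1$-length spectrum is injective --- an infinitesimal form of marked length-spectrum rigidity for convex projective deformations, which should follow from the rigidity theory of divisible properly convex open sets. Granted this, $v=0$, and the rest of the argument reduces to the routine combination of the maximum principle of Theorem~\ref{h<} with the thermodynamic characterization of degeneracy in Proposition~\ref{0}.
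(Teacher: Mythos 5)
Your reduction is exactly the paper's: Theorem~\ref{h<} (together with the openness of the dividing condition) makes $h^{\omega_1}$ critical at $\iota$, and then Proposition~\ref{0} converts $\PP^{\omega_1}(v)=0$ into the vanishing of $\partial_t|_{t=0}\lambda_1\big(\rho_t(\g)\big)$ for every conjugacy class. The problem is the last step, which you yourself flag as ``the principal obstacle'': you never actually prove that the vanishing of the derivative of the marked $\lambda_1$-length spectrum forces $v=0$. Appealing to ``the rigidity theory of divisible properly convex open sets'' is not a proof --- infinitesimal marked-length-spectrum rigidity for convex projective deformations is not an off-the-shelf result you can cite, and in any case the statement you need is injectivity of the differential of the length spectrum on the \emph{whole} tangent space $\sf T_\iota\frak X\big(\G,\PGL_{n+1}(\R)\big)$, not only on the summand $H^1_\iota(\G,\s)$. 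The paper closes precisely this gap by invoking B.-Canary-Labourie-S. \cite[Proposition 10.1]{pressure}: for an irreducible projective Anosov representation at a regular point, the differentials $\big\{d_\iota\omega_1^\g:[\g]\in[\G]\big\}$ span the cotangent space. That statement is purely about irreducibility and the Anosov property; no convex projective rigidity and no cohomology vanishing is needed.

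Two further remarks. First, the detour through Matsushima/Weil vanishing of $H^1_\iota\big(\G,\so(n,1)\big)$ is both unnecessary (given the spanning result above) and incomplete: it says nothing for $n=2$, where $H^1_\iota\big(\G,\so(2,1)\big)$ is the tangent space to Teichm\"uller space and is certainly nonzero. The paper treats $n=2$ separately, observing that there $\iota$ is $\PGL_3(\R)$-generic so Theorem~\ref{t.pressure1} applies directly; for $n>2$ genericity fails, which is exactly why the Proposition~\ref{0} argument is needed. Second, a small point of hygiene: Proposition~\ref{0} is a statement about a single curve, so to conclude non-degeneracy of the quadratic form at $v$ you should note that the conclusion $\partial_t|_{t=0}\lambda_1\big(\rho_t(\g)\big)=d_\iota\omega_1^\g(v)$ depends only on $v$ and not on the chosen curve. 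As written, your argument is a correct skeleton of the paper's proof with the decisive lemma left unproved.
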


\begin{proof}
When $n=2$ this follows directly from Theorem \ref{t.pressure1}, but if  $n>2,$ the embedding $\so(n,1)\subset\sl_{n+1}(\R)$ is not $\PGL_{n+1}(\R)$-generic so, even though $\iota(\G)$ is irreducible, we need additional arguments. Nevertheless, by Theorem \ref{h<}, the entropy function $\rho\mapsto h^{\omega_1}(\rho)$ is critical at $\iota,$ so by Proposition \ref{0} one only needs to verify that the set 
$$\big\{d_\iota\omega_1^\g:[\g]\in[\G]\big\}$$
spans the cotangent space $\sf{T}^*_\iota\frak X\big(\G,\PSL_{n+1}(\R)\big),$
 where $\omega_1^\g:\frak X\big(\G,\PGL_{n+1}(\R)\big)\to\R$ is the function 
 $$\rho\mapsto\omega_1\Big(\lambda\big(\rho(\g)\big)\Big).$$  
 As $\iota$ is irreducible and projective Anosov, this is the content of B.-Canary-Labourie-S. \cite[Proposition 10.1]{pressure}.
\end{proof}

Consider then $v\in H^1_\iota(\G,\s)\subset \sf T_\iota\frak X\big(\G,\PSL_{n+1}(\R)\big),$ by equation (\ref{j=i}) the purely imaginary vector $J\cdot v\in\sf T_\iota\frak X\big(\G,\PSL_{n+1}(\C)\big)$ belongs to $ H^1_\iota\big(\G,\PSU(n,1)\big)$ and represents thus a non-trivial infinitesimal deformation of $\iota$ inside $\PSU(n,1).$ As in Lemma \ref{degenerate} we choose a differentiable curve $(\rho_t)_{t\in(-\epsilon,\epsilon)}\subset\frak X\big(\G,\PSU(n,1)\big) $  with $\rho_0=\iota$ and $\dot \rho_0=Jv$ and $\tau\rho_t=\rho_{-t}$.

 By Lemma \ref{degenerate} we have that 
\begin{equation}\label{P1=0}
0=\PP^{\omega_1}_\iota(Jv)=\left.\frac{\partial^2}{\partial t^2}\right|_{t=0}\JJ^{\omega_1}(\iota,\rho_t).
\end{equation} 
Expanding the second term, and using that both $h^{\omega_1}(\rho_t)$ and $\II^{\omega_1}_\iota(\rho_t)$ are critical at $t=0$ (as in the proof of Lemma \ref{degenerate}) and that $\II^{\omega_1}_\iota$ is pluriharmonic, we get
$$0=\Hess_\iota(h^{\omega_1})(Jv)-(n-1)\Hess_\iota(\II^{\omega_1}_\iota)(v).$$ 
On the other hand 
$$\PP^{\omega_1}_\iota(v)=\Hess_\iota(h^{\omega_1})(v)+(n-1)\Hess_\iota(\II^{\omega_1}_\iota)(v).$$
Which in turn gives
$$\Hess_\iota(h^{\omega_1})(Jv)=\PP^{\omega_1}_\iota(v)-\Hess_\iota(h^{\omega_1})(v)>0,$$
since $\PP^{\omega_1}_\iota(v)>0$ by Corollary \ref{P>0}, and $-\Hess_\iota(h^{\omega_1})(v)\geq0$ since by Theorem \ref{h<} $\iota$ is a global maxima of $h^{\omega_1}$ among deformations in $\PSL_{n+1}(\R).$ The result then follows.

\subsection{The Hessian of the entropy at the Fuchsian locus of the Hitchin component}\label{s.5.5}

Applying the same techniques as in the last section we can also show the following result on the Hitchin component.

\begin{cor}\label{cC}
Let $\iota\in\hitchin_d(S)$ be a representation $\pi_1S\to\PSL_2(\R)\to \PSL_d(\R)$ in the embedded Teichhm\"uller space. Then $\Hess(h_\iota^{\omega_1})$ is positive definite on purely imaginary directions of $\sf T_\iota\frak X\big(\pi_1S,\PSL_d(\C)\big).$ \end{cor}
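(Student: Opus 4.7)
The plan is to run essentially the same argument as in the proof of Theorem~\ref{thmComplex} (Section~\ref{prooftC}), with the inclusion $\iota:\G\to\PSO(n,1)\subset\PSU(n,1)$ replaced by the Fuchsian representation $\iota:\pi_1S\to\PSL_2(\R)\to\PSL_d(\R)$ regarded as a point of $\frak X\big(\pi_1S,\PSL_d(\C)\big).$ Fix a non-zero $v\in\sf T_\iota\frak X\big(\pi_1S,\PSL_d(\R)\big)$ and, as in Lemma~\ref{degenerate}, choose a $\class^1$ curve $(\rho_t)_{t\in(-\eps,\eps)}\subset\frak X\big(\pi_1S,\PSL_d(\C)\big)$ with $\rho_0=\iota,$ $\dot\rho_0=Jv$ and $\tau\rho_t=\rho_{-t}.$ Because $\iota$ is Hitchin and hence projective Anosov, Lemma~\ref{degenerate} yields $\PP^{\omega_1}(Jv)=0.$

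Expanding $\partial_t^2|_{t=0}\JJ^{\omega_1}(\iota,\rho_t)$ as in Section~\ref{prooftC}, the symmetry $\tau\rho_t=\rho_{-t}$ forces both $h^{\omega_1}(\rho_t)$ and $\II^{\omega_1}_\iota(\rho_t)$ to be critical at $t=0,$ while the pluriharmonicity of $\II^{\omega_1}_\iota$ from Proposition~\ref{pluriharmonic} converts $\Hess\II^{\omega_1}_\iota(Jv)$ into $-\Hess\II^{\omega_1}_\iota(v).$ Combining with the analogous expansion of $\PP^{\omega_1}(v)$ along a real curve tangent to $v,$ one obtains the key identity
$$\Hess_\iota h^{\omega_1}(Jv)=\PP^{\omega_1}_\iota(v)-\Hess_\iota h^{\omega_1}(v),$$
exactly as in Section~\ref{prooftC}. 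It therefore suffices to prove that $\PP^{\omega_1}_\iota(v)>0$ and $\Hess_\iota h^{\omega_1}(v)\leq 0.$

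The first inequality follows from Theorem~\ref{t.pressure1} once I check its hypotheses for $\iota$: the Fuchsian representation is projective Anosov (being Hitchin), irreducible (as it factors through the principal embedding $\PSL_2(\R)\to\PSL_d(\R)$), a regular point of the character variety (Hitchin representations are smooth points), and $\PSL_d(\R)$-generic, since its Zariski closure is the image of the principal $\PSL_2(\R),$ which already contains the images of hyperbolic elements; these acquire pairwise distinct eigenvalues in $\PSL_d(\R),$ so their centralizers are maximal tori. The second inequality follows from the theorem of Potrie-S.\ asserting that $h^{\omega_1}$ attains its global maximum on $\hitchin_d(S)$ precisely along the Fuchsian locus: as $\hitchin_d(S)$ is open in $\frak X\big(\pi_1S,\PSL_d(\R)\big),$ $\iota$ is a local maximum there and $\Hess_\iota h^{\omega_1}$ is negative semidefinite on real tangent vectors. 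Plugging these into the displayed identity yields $\Hess_\iota h^{\omega_1}(Jv)>0$ for every non-zero real $v$ and hence the corollary.

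The main obstacle to executing this plan is to pin down the exact statement of the Potrie-S.\ maximality result in a normalization compatible with the one used in Theorem~\ref{t.pressure1}; once that reference is invoked, every remaining step is a line-by-line translation of the proof of Theorem~\ref{thmComplex} to the Fuchsian point in the Hitchin component.
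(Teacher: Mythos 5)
Your proposal is correct and follows essentially the same route as the paper: the identity $\Hess_\iota h^{\omega_1}(Jv)=\PP^{\omega_1}_\iota(v)-\Hess_\iota h^{\omega_1}(v)$ obtained by mimicking the proof of Theorem~\ref{thmComplex}, positivity of $\PP^{\omega_1}$ at $\iota$ via Theorem~\ref{t.pressure1}, and $\Hess_\iota h^{\omega_1}(v)\le 0$ from the Potrie--S.\ entropy-maximality of the Fuchsian locus. Your explicit verification of the hypotheses of Theorem~\ref{t.pressure1} (irreducibility, regularity, and $\PSL_d(\R)$-genericity via the distinct eigenvalues of principal images of hyperbolic elements) is a detail the paper leaves implicit, and the normalization issue you flag only affects an overall positive factor $h^{\omega_1}(\iota)$ in front of $\PP^{\omega_1}_\iota(v)$, which does not change the sign conclusion.
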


\begin{proof}
We mimic the last paragraph. In this case the pressure form $\PP^{\omega_1}$ is positive definite on $\sf T_\iota \hitchin\big(S,\PSL_d(\R)\big)$ directly by Theorem \ref{t.pressure1}. One gets, through the same arguments, that 
$$\Hess_\rho(h^{\omega_1})(Jv)=\PP^{\omega_1}(v)-\Hess(h^{\omega_1})(v).$$
As we already observed, the first term on the right hand side is positive by Theorem \ref{t.pressure1}, while $\Hess(h^{\omega_1})(v)\leq0$ since, by Potrie-S. \cite[Theorem A]{exponentecritico}, Fuchsian representations are maxima for the entropy within the Hitchin locus. The corollary follows.
\end{proof}

We refer the reader to Dey-Kapovich \cite{KD} (see also Ledrappier \cite{ledrappier} and Link \cite{link}) for an interpretation of the critical exponent $h^{\omega_1}(\rho)$ as the Hausdorff dimension of the limit set with respect to a visual metric, i.e. a metric with respect to which the group action is conformal.

Finally, it would be interesting to relate Corollary \ref{cC}, or an analog of it, to the recent work by Dai-Li \cite{Dai-Li} studying the translation lengths on the symmetric space of $\PSL_d(\C),$ when one deforms a Fuchsian representation along its Hitchin fiber.

\bibliography{new}
\bibliographystyle{plain}

\author{\vbox{\footnotesize\noindent 
	Martin Bridgeman\\
	Boston College \\ Department of Mathematics\\ 
	Chestnut Hill, Ma 02467 Unites States of America\\
	\texttt{bridgem@bc.edu}
\bigskip}}

\author{\vbox{\footnotesize\noindent 
	Beatrice Pozzetti\\
Ruprecht-Karls Universit\"at Heidelberg\\ Mathematisches Institut, Im
Neuenheimer Feld 205, 69120 Heidelberg, Germany\\
	\texttt{pozzetti@mathi.uni-heidelberg.de}
\bigskip}}

\author{\vbox{\footnotesize\noindent 
	Andr\'es Sambarino\\
	Sorbonne Universit\'e \\ IMJ-PRG (CNRS UMR 7586)\\ 
	4 place Jussieu 75005 Paris France\\
	\texttt{andres.sambarino@imj-prg.fr}
\bigskip}}

\author{\vbox{\footnotesize\noindent 
	Anna Wienhard\\
	Ruprecht-Karls Universit\"at Heidelberg\\ Mathematisches Institut, Im
Neuenheimer Feld 205, 69120 Heidelberg, Germany\\ HITS gGmbH, Heidelberg Institute for Theoretical Studies\\ Schloss-Wolfsbrunnenweg 35, 69118 Heidelberg, Germany\\
	\texttt{wienhard@uni-heidelberg.de}
\bigskip}}

\end{document}